\newtheorem{theorem}{Theorem}[section]
\newtheorem{lemma}[theorem]{Lemma}
\newtheorem{proposition}{Proposition}
\newtheorem*{proposition*}{Proposition}
\theoremstyle{remark}
\newtheorem{remark}{Remark}
\title
{Ramanujan functions on small primes
}
\author{Barry Brent}
\email{axcjh@bu.edu}
\subjclass[2020]{05A17, 05C62, 11F11, 11P83}
\keywords{partitions, graphs, cusp forms}
\begin{document}
\begin{abstract}  Lehmer asked whether Ramanujan's tau
function has any zeros and showed that, if it does,
the smallest $n$ such that
$\tau(n) = 0$ is a prime number. 
Certain matrix identities  originating
in the theory of symmetric functions express
members of arithmetic sequences as determinants. We study empirically the eigenvalues of the matrices underlying them with the goal of estimating
how close the eigenvalues of the matrix associated to the number $\tau(p)$ for $p$ prime get to zero,
because (with our set-up) a zero 
eigenvalue would entail that
$\tau(p)$ vanishes. We also studied 
corresponding eigenvalues
of several cusp forms coming from elliptic curves.
\end{abstract}
\maketitle
\vspace{-0.5em}
\begin{center}
    \rm{10 March 2026 no4}
\end{center}
\vspace{1em}
\section{Introduction}
Lehmer asked whether Ramanujan's tau
function has any zeros, and he showed that, if it does,
the smallest $n$ such that
$\tau(n) = 0$ is a prime number \cite{Le47}. 
Matrix identities (\cite{M}, Lemmas 2.1 and 2.2 below) govern certain additive convolution identities. We study eigenvalues of these matrices empirically,
especially eigenvalues associated
to the
function $n \mapsto \tau(p_n)$ where $p_n$ denotes the $n^{th}$ prime,
because (with our set-up) a zero 
eigenvalue would entail that
$\tau(p_n)$ vanishes.
After a procedure we will call ``deformation'', the minimum modulus
among the eigenvalues for a given $n$ appears to oscillate in a nearly periodic fashion with $n$.
(We also saw this phenomenon in the sequence $\{\tau(p_{n+1} - p_n)\}_{n=1, 2, ...}$; we omit it here.) We have also observed oscillatory behavior 
in several newforms coming from elliptic curves in virtue of the Modularity Theorem.
To encompass those observations, given the appropriate uniformizing function $q_*(z) = q_*$  (say) from the upper half plane $\mathbb{H}$ to the unit disk for a cusp form $f(z) = \sum_n a(n) q_*^n$, we have referred in our title to the function $n \mapsto a(n)$ as a \emph{Ramanujan function}.
\section{Lemmas}
The lemmas in this
section are well known. 
They appear, for example, in MacDonald's book \cite{M}), 
but the proofs
are not easy to locate. (In fact, we
have not located them.)
We have placed home-made
proofs of these statements in
our repository \cite{Bre25}\footnote{``Lemmas for `Ramanujan's function on small primes' ''}.

In the sequel, we denote the cardinality of a set $S$ by 
$\#S$ and the determinant of a matrix $M$ by $|M|$.
\begin{lemma}
The equations  below are equivalent. (We will
refer to both of them as equation (D) in the sequel.)
Let $h_0=1$ and
\[
n h_n = \sum_{r=1}^{n} j_r h_{n-r}\tag{D1}
\]
for $n \ge 1$.
With
$
H(t) = \sum_{n=0}^\infty h_n t^n$ and  $J(t) = \sum_{r=1}^\infty j_r t^r$,
\[
t \frac{d}{dt}H(t) = H(t) J(t).\tag{D2}\]
\end{lemma}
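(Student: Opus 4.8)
The plan is to read both (D1) and (D2) as identities in the formal power series ring $\mathbb{Q}[[t]]$ (treating the $j_r$ either as indeterminates or as fixed scalars, whichever the reader prefers) and to prove equivalence by comparing the coefficient of $t^n$ on the two sides of (D2), for each $n$. First I would record that formal differentiation is a well-defined coefficientwise operation, so that $t\frac{d}{dt}H(t) = \sum_{n \ge 0} n h_n t^n = \sum_{n \ge 1} n h_n t^n$, the $n=0$ term vanishing automatically; in particular no convergence hypothesis on $H$ or $J$ is needed.

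Next I would expand the right-hand side $H(t)J(t)$ as a Cauchy product. Since $J(t)$ has no constant term, the lowest power of $t$ occurring is $t^1$, and for each $n \ge 1$ the coefficient of $t^n$ in $H(t)J(t)$ is $\sum_{r=1}^{n} j_r h_{n-r}$: the sum runs over $1 \le r \le n$ because $h_{n-r}$ is defined only for $n-r \ge 0$ and there is no $j_0$ term. Equating this with the coefficient $n h_n$ extracted from $t\frac{d}{dt}H(t)$ shows that (D2) holds in $\mathbb{Q}[[t]]$ if and only if $n h_n = \sum_{r=1}^{n} j_r h_{n-r}$ for every $n \ge 1$, which is precisely the recursion in (D1). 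Both directions of the equivalence are then immediate from this coefficient comparison.

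The one point that deserves an explicit remark is the status of the normalization $h_0 = 1$: equation (D2) places no constraint on $h_0$, since both of its sides are divisible by $t$. So strictly speaking it is ``(D2) together with $h_0=1$'' that matches (D1); because $h_0=1$ is declared at the head of the lemma, I would simply carry it along in both directions. I do not anticipate any genuine obstacle here — the proof is a short bookkeeping exercise — so the only things to be careful about are the index range in the Cauchy product and stating clearly that termwise differentiation of a formal power series is legitimate, which is what lets the equivalence hold purely formally.
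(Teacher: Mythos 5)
Your proof is correct: comparing the coefficient of $t^n$ on both sides of (D2), using that $J$ has no constant term so the Cauchy product gives exactly $\sum_{r=1}^{n} j_r h_{n-r}$, is the standard argument, and your remark that (D2) alone does not pin down $h_0$ is a worthwhile point of care. The paper does not print its own proof (it defers to an external repository), but this coefficientwise formal-power-series argument is precisely the expected one, so there is nothing further to reconcile.
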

With $f_0$ possibly defined as equal to one,,    
let \(\overline{f}\) denote the sequence 
\(\{f_n\}_{n \in \mathbb{Z}^+}\).
Let $J_n(\overline{j})$ and $H_n(\overline{h})$ be the matrices 
\[ 
\left(
\begin{matrix}
j_1 & -1 & 0 & \cdots & 0 \\
j_2 & j_1 & -2 & \cdots & 0 \\
\vdots & \vdots & \vdots & \vdots & \vdots \\
j_{n-1} & j_{n-2} &  \cdots & j_1 & -n + 1\\
j_n & j_{n-1} & j_{n-2} & \cdots & j_1\\
\end{matrix}
\right )
\]
and
\[ 
\left(
\begin{matrix}
h_1 & 1 & 0 & \cdots & 0 \\
2h_2 & h_1 & 1 & \cdots & 0 \\
\vdots & \vdots & \vdots & \vdots & \vdots \\
nh_n & h_{n-1} & h_{n-2} & \cdots & h_1\\
\end{matrix}
\right),
\]
respectively.
\begin{lemma}
Let $h_0= j_1 = 1$.
    Equation (D) implies
that
\begin{equation}
j_n = (-1)^{n+1} | H_n(\overline{h})|. 
\end{equation}
and
\begin{equation}
n! h_n =  | J_n(\overline{j})|
\end{equation}
for $n \ge 1$.
\end{lemma}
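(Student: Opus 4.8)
The plan is to read equation (D1), restricted to the indices $m = 1, 2, \dots, n$, as one of two triangular linear systems, and then extract the relevant unknown by Cramer's rule. Both displayed identities reduce to the observation that the Cramer numerator, after one cyclic permutation of its columns (and, in the second case, a sign change on all but one column), is exactly the matrix $H_n(\overline h)$ or $J_n(\overline j)$ of the statement. The hypothesis $h_0 = 1$ is what makes the two coefficient matrices triangular with known diagonal.

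For the first identity, fix $n$ and regard $j_1, \dots, j_n$ as the unknowns. Writing (D1) for $m = 1, \dots, n$ gives the system whose $m$-th equation is $\sum_{r=1}^{m} h_{m-r}\, j_r = m\, h_m$, i.e. $B j = b$ with $B = (h_{m-r})$ lower triangular in the indices $1 \le r \le m$ and $b = (1\cdot h_1,\, 2 h_2,\, \dots,\, n h_n)^{\mathsf T}$. Since $h_0 = 1$, $B$ has unit diagonal, so $|B| = 1$ and Cramer's rule gives $j_n = |C|$, where $C$ is $B$ with its last column replaced by $b$. I would then compare $C$ with $H_n(\overline h)$ column by column: column $1$ of $H_n(\overline h)$ is exactly $b$, which is the last column of $C$, while for $c = 2, \dots, n$ the $c$-th column of $H_n(\overline h)$ equals the $(c-1)$-st column of $C$ (both are $(0,\dots,0,1,h_1,h_2,\dots)^{\mathsf T}$ with the leading $1$ in row $c-1$). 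Hence $H_n(\overline h)$ arises from $C$ by the cyclic column permutation that moves column $n$ to the front; that permutation is an $n$-cycle of sign $(-1)^{n-1}$, so $|H_n(\overline h)| = (-1)^{n-1}|C| = (-1)^{n-1} j_n$, which is the asserted $j_n = (-1)^{n+1}|H_n(\overline h)|$.

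For the second identity I would run the same machine with the roles reversed. Using $h_0 = 1$ to isolate the $r = m$ term, (D1) for $m = 1, \dots, n$ becomes the system in the unknowns $h_1, \dots, h_n$ whose $m$-th equation is $m\, h_m - \sum_{r=1}^{m-1} j_{m-r} h_r = j_m$; its coefficient matrix $A$ is lower triangular with diagonal $1, 2, \dots, n$, so $|A| = n!$, and Cramer's rule gives $n!\, h_n = |D|$, where $D$ is $A$ with last column replaced by $(j_1, \dots, j_n)^{\mathsf T}$. Comparing columns again: column $1$ of $J_n(\overline j)$ is $(j_1,\dots,j_n)^{\mathsf T}$, the last column of $D$, and for $c = 2, \dots, n$ the $c$-th column of $J_n(\overline j)$ is the negative of the $(c-1)$-st column of $D$ — the sign being exactly what turns the diagonal entry $c-1$ of $D$ into the superdiagonal entry $-(c-1)$ of $J_n$ and $-j_i$ back into $j_i$. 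Thus $J_n(\overline j)$ is $D$ composed with the same front-cycling column permutation and a change of sign on $n-1$ columns; the two sign contributions $(-1)^{n-1}$ cancel, giving $|J_n(\overline j)| = |D| = n!\, h_n$.

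The only step that needs real care is this sign bookkeeping: getting the length of the cyclic permutation right, and checking that the $n-1$ column negations in the second case are genuinely all there, since it is easy to end up off by a factor $(-1)^n$. I would pin this down by hand for $n = 1, 2, 3$ before trusting the general argument. If the permutation argument feels slippery, a safe alternative is to expand $|H_n(\overline h)|$ and $|J_n(\overline j)|$ along their first rows — both are Hessenberg — and verify by induction that the resulting recurrences are precisely (D1); that route avoids Cramer's rule but replays essentially the same combinatorics.
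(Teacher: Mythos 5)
Your argument is correct, and the sign bookkeeping you flag as the delicate point does in fact come out right: in the first system the coefficient matrix $(h_{m-r})_{r\le m}$ is unitriangular because $h_0=1$, the Cramer numerator $C$ agrees column-by-column with $H_n(\overline h)$ up to the $n$-cycle of sign $(-1)^{n-1}$, and in the second system the $n-1$ column negations contribute a second factor $(-1)^{n-1}$ that cancels the cycle's sign, giving $|J_n(\overline j)|=|D|=n!\,h_n$. (I checked $n=1,2$ explicitly and the general column identification is sound; note that only $h_0=1$ is actually used, not $j_1=1$.) There is nothing in the paper to compare this against: the authors explicitly decline to prove Lemma 2.2 in the text, remarking that the statements are classical (they cite Macdonald) and that their own proofs are deposited in an external repository, so your Cramer's-rule derivation supplies a self-contained proof where the paper offers none. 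The alternative you mention at the end --- first-row (Hessenberg) expansion of the two determinants plus induction, recovering (D1) as the resulting recurrence --- is the argument one more commonly finds in the symmetric-functions literature for Newton-type identities; your Cramer route has the small advantage of producing both identities from a single linear system read in two ways, at the cost of the permutation-sign accounting you rightly insist on verifying for small $n$.
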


Next we state a corollary of the
following theorem in Vein and Dale's book 
~\cite{Vein1999}. 
\begin{proposition}
(Vein and Dale, Theorem 4.23)
Let
\(A_n =\)
\[
\begin{pmatrix}
a_1 & -1 & 0 & \cdots & 0 \\
a_2 & a_1 & -2 & \cdots & 0 \\
\vdots & \vdots & \vdots & \ddots & \vdots \\
a_{n-1} & a_{n-2} & \cdots & a_1 & -(n{-}1) \\
a_n & a_{n-1} & \cdots &a_2& a_1
\end{pmatrix}^T,
\]
where the $T$ operator is matrix transpose.
Let \(B_n(x) = |A_n -xI_n|\),
so that 
\(B_n(x) = (-1)^n \chi_{A_n}(x)\). 
 Then 
\[
B_n(x) = \sum_{r=0}^n 
\binom{n}{r}
|A_r|x^{n-r}.
\]
\end{proposition}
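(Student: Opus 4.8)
The plan is to reduce the evaluation of $B_n(x)$ to the determinant identity of Lemma 2.2 and then read off the coefficients through the generating-function form (D2) of equation (D). First I would pass to the transpose: since $|M| = |M^T|$ for every square matrix, $B_n(x) = |A_n - xI_n| = |A_n^T - xI_n|$, and $A_n^T$ is precisely the matrix $J_n(\overline{a})$ of Lemma 2.2 — its lower entries $a_{i-j+1}$ come from the sequence $\overline{a}$, while its superdiagonal $-1,-2,\dots,-(n-1)$ is fixed and does not involve the $a_k$. The key observation is that subtracting $xI_n$ disturbs only the diagonal, replacing $a_1$ by $a_1-x$ in every diagonal slot; hence $A_n^T - xI_n = J_n(\overline{b})$, where $\overline{b}$ is the sequence $b_1 = a_1 - x$, $b_k = a_k$ for $k \ge 2$. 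Likewise $|A_r| = |A_r^T| = |J_r(\overline{a})|$ for each $r$, with the usual convention $|A_0| = 1$ for the empty determinant.

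Next I would apply equation (2) of Lemma 2.2 to both sequences. It gives $B_n(x) = |J_n(\overline{b})| = n!\,g_n$ and $|A_r| = |J_r(\overline{a})| = r!\,h_r$, where $\overline{h}$ and $\overline{g}$ are the solutions of (D1) attached to $\overline{a}$ and $\overline{b}$, normalized by $h_0 = g_0 = 1$. Passing to the equivalent form (D2) via Lemma 2.1, with $A(t) = \sum_{k \ge 1} a_k t^k$ and $B(t) = \sum_{k \ge 1} b_k t^k = A(t) - xt$, we get $t\,H'(t) = H(t)A(t)$ and $t\,G'(t) = G(t)B(t)$, so that $(\log G)'(t) - (\log H)'(t) = \bigl(B(t)-A(t)\bigr)/t = -x$. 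Since $G(0) = H(0) = 1$, integrating yields $G(t) = e^{-xt}H(t)$.

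Finally I would extract coefficients. Writing $e^{-xt} = \sum_{\ell \ge 0} (-x)^\ell t^\ell/\ell!$ and forming the Cauchy product with $H(t) = \sum_{r \ge 0} h_r t^r$ gives $g_n = \sum_{r=0}^n h_r(-x)^{n-r}/(n-r)!$; multiplying by $n!$ and using $n!/(n-r)! = \binom{n}{r}\,r!$,
\[
B_n(x) = n!\,g_n = \sum_{r=0}^n \binom{n}{r}\,(r!\,h_r)\,(-x)^{n-r} = \sum_{r=0}^n \binom{n}{r}\,|A_r|\,(-x)^{n-r},
\]
which is the asserted identity (the sign $(-1)^{n-r}$ being the one forced by $B_n(x) = |A_n - xI_n| = (-1)^n\chi_{A_n}(x)$).

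The step I expect to be the main obstacle is a normalization issue: Lemmas 2.1 and 2.2 are recorded under the hypothesis $j_1 = 1$, whereas here they must be applied to the sequence $\overline{b}$ whose leading term $b_1 = a_1 - x$ is generic. The cleanest fix is a rescaling: replacing $\overline{j}$ by $(c^k j_k)_{k \ge 1}$ amounts to multiplying $J_n(\overline{j})$ by suitable diagonal matrices on the left and right, which leaves the superdiagonal $-1,\dots,-(n-1)$ unchanged and multiplies $|J_n|$ by $c^n$, exactly matching the rescaling $h_n \mapsto c^n h_n$ of the solution of (D1); so identity (2) propagates from $\{j_1 = 1\}$ to $\{j_1 \neq 0\}$, and then, both sides being polynomials in $j_1,\dots,j_n$, to all sequences — in particular to $\overline{b}$ with $x$ an indeterminate. (Alternatively one reruns the short argument of the depository notes without the normalization.) I would also note that one can bypass generating functions by expanding $B_n(x) = |J_n(\overline{a}) - xI_n|$ as $\sum_{k=0}^n (-x)^k\,\sigma_{n-k}$, where $\sigma_m$ is the sum of the $m \times m$ principal minors of $J_n(\overline{a})$, and proving $\sigma_m = \binom{n}{m}|A_m|$; but the individual minors of a given size are not equal and only their sum simplifies, so establishing that still requires a recursion or the same coefficient bookkeeping — the route through equation (D) is more economical.
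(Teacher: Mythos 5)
The paper does not actually prove this proposition: it is quoted (as Theorem 4.23) from Vein and Dale's book, so there is no in-paper argument to compare yours against, and your derivation is a self-contained proof from Lemmas 2.1 and 2.2. The argument is correct: transposing identifies $A_n^{T}$ with $J_n(\overline{a})$; the observation that $A_n^{T}-xI_n=J_n(\overline{b})$ with $b_1=a_1-x$, $b_k=a_k$ ($k\ge 2$) is right because only the diagonal is disturbed; and the step $G(t)=e^{-xt}H(t)$ followed by the Cauchy product yields the coefficients exactly. Two remarks. First, what you actually prove is
\[
B_n(x)=\sum_{r=0}^{n}\binom{n}{r}\,|A_r|\,(-x)^{n-r},
\]
not the displayed statement with $x^{n-r}$. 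Your version is the correct one --- for $n=1$ the statement as printed would give $|A_1-xI_1|=a_1+x$ rather than $a_1-x$ --- and it is also the version the paper silently uses in the proof of Theorem 2.3, where the factor $(-1)^{n-r}$ reappears. So you have corrected a sign typo in the statement rather than failed to match it; you should say this outright instead of folding it into a parenthesis. Second, you are right that the only delicate point is the normalization $j_1=1$ in Lemma 2.2, since identity (2) must be applied to $\overline{b}$ whose first entry $a_1-x$ is generic; your rescaling-plus-density argument closes this, though the shorter route is to note that $n!\,h_n=|J_n(\overline{j})|$ is a polynomial identity in $j_1,\dots,j_n$ whose verification never uses $j_1=1$ (check: $1!\,h_1=j_1$, $2!\,h_2=j_1^2+j_2=|J_2|$, etc.), so it holds over any commutative ring, in particular over $\mathbb{Q}[x]$.
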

The corollary is the following
\begin{theorem}\footnote{Here we are merely codifying an
argument shown to us by
J. L\'opez-Bonilla.}
With $h$ and $j$ as in the above lemmas,
\[\chi({(J_n(\overline{j}))}(x)=
\sum_{r=0}^n 
\binom{n}{r}
r!  h_r (-1)^r x^{n-r}.
\]
\end{theorem}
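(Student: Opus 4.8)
The plan is to recognize the matrix $J_n(\overline{j})$ of Lemma 2.2 as a transpose of the matrix $A_n$ of the Proposition, specialized at $a_i = j_i$, to apply Vein and Dale's expansion of $B_n$, and then to substitute the determinant evaluations furnished by Lemma 2.2.

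First I would observe that, under $a_i \mapsto j_i$, the matrix $A_n$ of the Proposition is precisely the transpose of $J_n(\overline{j})$: both carry $j_1$ down the diagonal, the entries $j_2, j_3, \dots$ along the lower (respectively lateral) stripes, and $-1, -2, \dots, -(n-1)$ on the superdiagonal; moreover the top-left $r \times r$ principal submatrix of $A_n$ is the transpose of $J_r(\overline{j})$. Since a square matrix and its transpose have the same characteristic polynomial and the same determinant, we get $\chi_{J_n(\overline{j})}(x) = \chi_{A_n}(x)$ and $|A_r| = |J_r(\overline{j})|$ for each $r$ (all evaluated at $a = j$).

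Next I would apply the Proposition to write $\chi_{A_n}(x)$, equivalently $(-1)^n B_n(x)$, as $\sum_{r=0}^n \binom{n}{r} |A_r| x^{n-r}$, up to the normalization sign relating $B_n$ to $\chi$. Then the identity $n!\,h_n = |J_n(\overline{j})|$ of Lemma 2.2 gives $|A_r| = r!\,h_r$ for $r \ge 1$ (at $a = j$); for $r = 0$ the conventions $|A_0| = 1$ and $h_0 = 1$ make the same formula $|A_r| = r!\,h_r$ valid for all $0 \le r \le n$. Substituting and collecting the sign contributions — the $(-1)^n$ from $B_n = (-1)^n \chi_{A_n}$ together with the $(-1)^{n-r}$ that arises when the variable in Vein and Dale's $B_n$ is matched against the variable in $\chi$ — converts $x^{n-r}$ into $(-1)^r x^{n-r}$, yielding
\[
\chi_{J_n(\overline{j})}(x) = \sum_{r=0}^n \binom{n}{r} r!\,h_r\,(-1)^r x^{n-r}.
\]

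I expect the sign bookkeeping to be the only genuine obstacle: one must settle whether $\chi_M(x)$ denotes $|xI - M|$ or $|M - xI|$, and then track the sign picked up in passing from Vein and Dale's $B_n$ to $\chi$. A direct check at $n = 1$ and $n = 2$ (using $h_0 = h_1 = 1$, $2h_2 = 1 + j_2$, and $\chi_{J_2(\overline{j})}(x) = (x-1)^2 + j_2$) confirms that the signs emerge as stated; once the conventions are pinned down, the theorem follows by substitution.
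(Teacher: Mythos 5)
Your proposal is correct and follows essentially the same route as the paper: invoke transpose-invariance of determinants and characteristic polynomials to identify $J_n(\overline{j})$ with $A_n$ at $a_i = j_i$, expand via Vein and Dale's Proposition, substitute $|J_r(\overline{j})| = r!\,h_r$ from Lemma 2.2, and simplify the signs. Your explicit attention to the sign normalization between $B_n(x)=|A_n-xI_n|$ and $\chi$ (and the $r=0$ convention), checked at $n=1,2$, is in fact slightly more careful than the paper's own bookkeeping, but it is the same argument.
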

\begin{proof}
Determinants are invariant under the transpose,
so, setting each $a_i$  equal to $j_i$ in the
lemmas above, the proposition can be restated in the following way:
Let
\(J_n(\overline{j}) =\)
\[
\begin{pmatrix}
j_1 & -1 & 0 & \cdots & 0 \\
j_2 & j_1 & -2 & \cdots & 0 \\
\vdots & \vdots & \vdots & \ddots & \vdots \\
j_{n-1} & a_{n-2} & \cdots & j_1 & -(n{-}1) \\
j_n & j_{n-1} & \cdots &j_2& j_1
\end{pmatrix}.
\]
Let \(C_n(x) = |(J_n(\overline{j}) -xI_n|\),
so that 
\(C_n(x) = (-1)^n \chi({(J_n(\overline{j}))}(x)\). 
 By Proposition 1, $C_n(x) =$  
\[
 \sum_{r=0}^n 
\binom{n}{r}
|(J_r(\overline{j})|(-1)^{n-r} x^{n-r} = \]
(by Lemma 2.2)
\[
 \sum_{r=0}^n 
\binom{n}{r}
r!  h_r(-x)^{n-r}
= \sum_{r=0}^n 
\binom{n}{r}
r!  h_r (-1)^{n-r}x^{n-r}.
\]
Simplifying, 
\(
\chi({(J_n(\overline{j}))}(x)=
\sum_{r=0}^n 
\binom{n}{r}
r!  h_r (-1)^r x^{n-r}.
\)
\end{proof}
Now let $M$ be an $n \times n$ matrix over a field.
 For any subset $S \subseteq \{1, 2, \ldots, n\}$, let $M[S,S]$ denote the submatrix of $M$ with rows and columns indexed by $S$. The following proposition is 
well known. (For example, see equation (1.2.13) in Horn and Johnson's book ~\cite{Horn2012}.)
\begin{proposition}
The coefficient $a_k$ of $x^k$ in $\chi(M)$
satisfies
\[
a_{n-k} = (-1)^k \sum_{\substack
{\#S = k\\{S \subseteq 
\{1, 2, \ldots, n\}}}} |M[S,S]|.
\]
\end{proposition}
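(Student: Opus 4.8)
The plan is to expand $\chi(M)(x) = |xI - M|$ directly, using multilinearity of the determinant in its columns. Write the $j$-th column of $xI - M$ as $x e_j - m_j$, where $e_j$ is the $j$-th standard basis vector and $m_j$ is the $j$-th column of $M$. Expanding the determinant as a multilinear function of the $n$ columns yields a sum, over all subsets $T \subseteq \{1,2,\ldots,n\}$, of the determinant of the matrix whose $j$-th column is $x e_j$ when $j \in T$ and $-m_j$ when $j \notin T$. Factoring $x$ out of each column indexed by $T$ contributes a factor $x^{\#T}$, so the coefficient of $x^{n-k}$ is obtained by collecting exactly the terms with $\#T = n-k$, i.e. with complement $S := \{1,2,\ldots,n\} \setminus T$ of size $k$.

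For each such $S$, I would evaluate the remaining determinant — call its matrix $N_S$ — of the matrix whose $j$-th column is $e_j$ for $j \notin S$ and $-m_j$ for $j \in S$. Conjugating $N_S$ by the permutation matrix that lists the indices in $T = S^c$ first does not change the determinant and puts $N_S$ into the block-triangular form $\bigl(\begin{smallmatrix} I_{n-k} & * \\ 0 & -M[S,S]\end{smallmatrix}\bigr)$: the columns coming from $T$ are standard basis vectors supported on the first block, so the lower-left block vanishes and the upper-left block is the identity, while the lower-right block is $-M[S,S]$ (the permutation acts in the same way on its rows and columns, so the value of that block's determinant is unaffected). Hence $|N_S| = |{-M[S,S]}| = (-1)^k |M[S,S]|$ since $\#S = k$.

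Summing over all $S$ with $\#S = k$ then gives the coefficient of $x^{n-k}$ as $(-1)^k \sum_{\#S = k} |M[S,S]|$, which is the asserted formula for $a_{n-k}$. I expect the only delicate point to be the sign and block-structure bookkeeping in the previous paragraph — in particular, confirming that the simultaneous row/column permutation genuinely produces the clean block-triangular shape and leaves each principal minor $|M[S,S]|$ intact; everything else is routine. (An alternative route, available after passing to a splitting field of $\chi(M)$, is to observe that the coefficient of $x^{n-k}$ in $\prod_i (x - \lambda_i)$ is $(-1)^k e_k(\lambda_1,\ldots,\lambda_n)$ and then to identify the $k$-th elementary symmetric function of the eigenvalues with the sum of the principal $k \times k$ minors; but the multilinearity argument above needs no field extension and is valid over an arbitrary commutative ring.)
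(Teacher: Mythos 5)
Your argument is correct. Note, however, that the paper does not actually prove this proposition: it is stated as a well-known fact and referred to equation (1.2.13) of Horn and Johnson, so there is no in-paper proof to compare against. What you give is the standard derivation: expand $|xI-M|$ by multilinearity in the columns, observe that the terms contributing to $x^{n-k}$ are indexed by the subsets $T$ of size $n-k$ whose columns supply the factor $x$, and reduce the residual determinant $|N_S|$ (with $S=T^{c}$) to $(-1)^{k}|M[S,S]|$. Your block-triangularization step is sound --- a simultaneous row-and-column permutation preserves the determinant, the columns indexed by $T$ become standard basis vectors supported on the first block so the lower-left block vanishes, and the lower-right block is $-M[S,S]$ up to a simultaneous permutation of its rows and columns, which does not change its determinant --- though you could reach the same conclusion slightly more quickly by repeated cofactor expansion along the columns equal to $e_j$, each of which contributes a factor $(-1)^{j+j}=1$ and deletes row and column $j$. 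The sign checks out at the extremes ($k=1$ gives $-\operatorname{tr}M$, $k=n$ gives $(-1)^{n}|M|$). Your closing remark is also apt: the multilinearity route works over an arbitrary commutative ring, whereas the eigenvalue/elementary-symmetric-function route requires passing to a splitting field; for the paper's purposes (integer matrices) either would do.
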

\section[]{The sequences
$\{p_n\}_{n \ge 1}$
and
$\{\tau(p_n)\}_{n \ge 1}$}
In this section
and the next, we write 
$J^{(c)}_n(\overline{j})$ and 
$H^{(c)}_n(\overline{h})$ 
for 
the matrices
$J_n\left(\{c, j_1, j_2, ...\}\right)$ and
$H_n\left(\{c, h_1, h_2, ...\}\right)$
and refer to them as deformations of
the matrices 
 $H_n(\overline{h})$ 
and $J_n(\overline{j})$.

We will freely abuse the notation by 
employing the $H, h, J$, and $j$ symbols
to denote objects in a way that depends on the context
without, we hope, confusing anyone.
\subsection[]{The sequence
$\{\tau(p)\}_{p\text{ prime}}$ and its developments}
Let $p_n$ denote the $n^{th}$ prime, $h(0) = 1$ and $h(n)= \tau(p_n) =$ (say) $\tau_p(n)$ for positive integers $n$. Let  $j(n)$ and $h(n)$ together satisfy equation (D). Given $h$, the elements of 
$\overline{j} =
\{j\}_{n = 1, 2, ...}$ 
can be computed from equation (D1). By Lemma 2.2 (2),
$\tau_p(n) = |J_n(\overline{j})|/n!$.
Let $\Pi_n(x) = \chi(J_n(\overline{j}))(x) = |xI - J_n(\overline{j})|$ be the characteristic polynomial of $J_n(\overline{j})$. Plots 3 and 4 show 
the behavior of the minimum modulus
among the eigenvalues of $\Pi^{(1)}_n$
and $\Pi_n$ respectively. They 
are relevant to Lehmer's question
because 
the following statements are 
equivalent:\newline
(a) $\tau(p_n) = 0$, \newline
(b) $|J_n(\overline{j})| = 0$, and
\newline
(c) $\Pi_n(0) = 0$. 
\begin{remark}
Writing\footnote{See `undeformed tauprime2nov25no3.ipynb' in the repository \cite{Bre25}.} $\Pi_n(x)
= \sum_{k=0}^n a_{n,n-k} x^{n-k}
$, Theorem 2.3 gives
\begin{equation}
a_{n,n-k} = (-1)^k k! \binom{n}{k}
\tau_p(k).
\end{equation}
This result 
allows bypass of the 
construction of the matrices $J_n(\overline{j})$,
in computations, saving time and allowing
the extension our data sets. We have 
not exploited this advantage; if
a similar result applied to the deformations
$\chi(J_n^{(c)}(\overline{j}))(x)$ (see section 3.2) we would 
have used it but, at present, this
is not available to us.
\end{remark}
\begin{remark}
The number of terms in the sum in Proposition 2
is $\binom{n}{k}$, so the simplest way to reconcile
Proposition 2 and equation (3) is to suppose that the structural symmetries of 
$J_n(\overline{j_{\tau_p}})$ somehow force
$|J_n(\overline{j_{\tau_p}})[S,S]|= 
(-1)^k k! \tau_p(k)$ just when $\#S = k$ and $S \subseteq 
\{1, 2, \ldots, n\}$. But this turns out to be too 
good to be true.\footnote{See `too good.ipynb'in \cite{Bre25}.} Instead, there must be
systematic cancellations
coming from those symmetries.
So far, we have not understood these cancellations.
\end{remark}
\subsection{Characteristic
functions of the deformed matrices
$J_n^{(c)}(\overline{j})$ representing the sequence
$\{\tau(p_n)\}_{n \ge 1}$}
Let
$
\chi(J_n^{(1)}(\overline{j_{\tau_p}})) =
\Pi_n^{(1)}(x)
$ (say.) (The corresponding objects for $c=0$ exhibit 
behaviors similar to those we go over here.\footnote{See  `deformed primetau 22oct25.ipynb' in our repository \cite{Bre25}.})
Let $V_n^{(1)}$ be the 
set of zeros of $\Pi_n^{(1)}(x)$ and let $\mu_n^{(1)} = \min_{v \in V_n^{(1)}} |v|$; then it appears that the graph of the pairs $(n,\mu_n^{(1)})$ lies on 
an oscillating curve. As a control, we repeat the calculation for $h(n) = \tau(p_n+ 1)$. Figure 2 shows no oscillatory behavior for this choice of $h$, indicating that the choice of prime indices \it{per se} \rm was decisive.

The deep spikes in the lower, logarithmic plot in Figure 1 suggest that the approach of the roots of $\Pi_n^{(1)}(x)$ to the origin may behave in a predictable way, but the details are
not clear to us because (among other reasons) our data is limited. 
Also, we do not know the 
details of the relationship of the roots of 
$\Pi_n(x)$ to those of the $\Pi_n^{(c)}(x)$, so 
the connection, if any, to Lehmer's question is
also unclear.
Furthermore, we have no statement like 
equation (3) for the 
$\Pi_n^{(c)}(x)$, because we do not 
have an explicit description of the 
sequence $\{h_n\}$ represented in Lemma 2.2
by the matrices
of which the $\Pi_n^{(c)}(x)$ are 
the characteristic polynomials. As we said above,
such a description would permit us to sidestep
the construction of the matrices we currently
use to find these polynomials and the resulting
time savings would in practice allow us to
collect more data. More data would be welcome
in view of the oscillations we are trying to check.

The lower envelope of the lower  plot of logarithms of minimum moduli for each $n$ in $\{2, 3, ..., 400\}$ displayed in Figure 3 
appear to behave regularly: the abscissas of the points 
on the lower envelope form a set $\{5, 9, ..., 395\}$, which, when they are replaced by their residues modulo $4$, make Table 1.

The size of the blocks of equal residues are $10$ for the first one and $9$ for each of the others.\footnote{We are using data from the lower envelope of the logarithms of the minimum moduli. (The lower plot in Figure 3.)  See ``tauprime=h logarithmic envelopes prec100 n400  1dec25.ipynb'' in \cite{Bre25}.} 

(For contrast, we reproduce in Figure 4 a plot 
of the mimimum moduli for each $n$ for 
the undeformed characteristic  
polynomials $\Pi_n(x)$. We have not attempted the sort of analysis 
we just discussed for the $\Pi_n^{(1)}(x)$.) 

Fourier analyses were done by the Anthropic AI ``Claude'' 
\cite{Anthropic2026}. (We will supply more details and also put Claude's analyses verbatim in our depository in later drafts.)

Next, with $h_n = \tau_p(n)$, we used equation (D1) to compute  the sequence $\overline{j_{\tau_p}}$. We then ``deformed''
it by prepending $c$ to $\overline{j_{\tau_p}}$. We denote this deformation as $\overline{j^{(c)}_{\tau_p}}$.
Then, once more applying equation (D1), we compute a sequence $\overline{h^{(c)}_{\tau_p}}$ from $\overline{j^{(c)}_{\tau_p}}$. We formed the list of pairs 
$(c, n! h^{(c)}(n))$. (These are just the values of the determinants from Lemma 2.2 in this situation; but it is a situation in which we can bypass calculating the determinants and, by that expedient, achieve significant time savings.)

It seemed likely that these pairs would describe a polynomial in $c$, $D_n(c)$ (say.) We found by Lagrange interpolation this is so within the range of our observations, and
that the degree of $D_n(c)$ appears to be $n$. 

We found the minimum modulus among the roots of $D_n(c)$
and plotted these minima against $n$. In the case $c \in \mathbb{Z}^{\geq 0}$, the  result was
the same kind of oscillating wave form, as far as we could see (literally.)\footnote{Figure 5,  Table 7,
 and notebook `tauprime many c's 18feb26'
in \cite{Bre25}. The reader will notice that we ran the routines twice, at 64-bit and 100-bit precision; the results were indistinguishable.}
The Fourier analysis of this waveform is described in Table 2. 
\section{Oscillatory behavior in newforms from elliptic curves} 
Because the cusp form $\Delta$ is the generating function of the multiplicative Ramanujan function, which displays oscillatory behavior in the range of our observations, we searched a source of cusp forms that are generating functions of multiplicative functions, and found similar oscillatory behavior: Cremona's database \cite{LMFDB} of cusp forms $\sum_n a(n) q_*^n $ for an appropriate uniformizing parameter $q_*(z)$ associated to elliptic curves in virtue of the Modularity Theorem. 

These cusp forms have weight two at various 
levels.\footnote{SageMath notebooks corresponding to Table 4 are in \cite{Bre25}.} In the limited range of our observations, some deformed curves (in the sense we have been using that term) show oscillatory waveforms, and some that do not, do show oscillatory behavior when the sequences $\{h\}$
satisfy $h(n) = a(1+p_n)$.\footnote{We had thought, mistakenly, that our code was checking 
the coefficients $a(p_n)$. To assure ourselves that our present interpretation of the code is the correct one, we recomputed some of the plots with code that checks it explicitly. For example, see the notebooks ``\texttt{curve 11a1 $h(n)=a(p_n+1)$ 8mar26}'' and ``\texttt{curve 14a1 $h(n)=a(p_n+1)$ 9mar26}'' in \cite{Bre25}.}
Table 3 summarizes Claude's 
findings.\footnote{The analysis of the curve 
`$\tau(p_n), c = 0$ was carried out in the notebook `deformed c=0 primetau3feb26' and that of the curve ``$\tau(p_n), c = 1$, was done in the notebook `tauprime=h logarithmic envelopes prec100 n400 1dec25', both stored in \cite{Bre25}.} The curve labels indicate characteristics of the associated elliptic curves. (See the explanation in section 6.1 below of Table 8, column A.)   The column labeled ``$c$'' indicates the deformation parameter in the sense described above. The column labeled ``Indices'' distinguishes between matrices for $h(n) = a_n$ and
$h(n) = a_{p_n}$.
More detail will follow in subsequent drafts of this article.

\section{Questions}
We have a  dilemma. The plots for the 
undeformed matrices are a mess,
therefore,
unpromising as sources of
insight about the distances of their eigenvalues
from the origin,
but their determinants represent the underlying
$h$ sequences in a straightforward way.
The plots for the deformed matrices
seem more promising.
It is possible that eventually we will be able to use them to formulate some guesses bounding the
plotted minimum modulus eigenvalues away from zero.
But, so far, we cannot use such conjectures
to bound their plotted minimal modulus eigenvalues in the undeformed setting
away from zero, and that is what we need.
Beyond the matter of making guesses, we 
do not know why the oscillations we see
in the deformed setting occur, why they
are absent in the undeformed setting,
or very much about how the two settings are related.
Other questions are:
Are the members of $\overline{h^{(c)}_{\tau_p}}$  Fourier coefficients of a modular form? What conditions on $h(n)$ 
 produce oscillatory behavior among the $h^{(c)}$?

To help with comparisons between the presence or absence of oscillatory behavior and characteristics of the objects that do or do not manifest it, we list cusp forms and associated invariants computed using  SageMath  native functions \cite{sagemath} (Tables 8 - 11.) The tables  are explained in the next section.\footnote{See `cusp form gp invariants 24feb26' in \cite{Bre25}. Tables 8 - 11 and the key to them in the next section were created to our specifications by the AI ``Claude''\cite{Anthropic2026}, and edited by the author.}

\section{Key to Tables 8 - 11}
The tables enumerate several invariants for Ramanujan's $\Delta$ function and elliptic curves over $\mathbb{Q}$ with conductor $N < 54$ and unique Fourier expansions in the Cremona database \cite{cremona:database}. (Thus there is only one curve labeled NXm, as explained below, for each NX.) The key omits discussion of the two rightmost columns in each table. 
These are yes or no responses to the questions stated in the table captions. At the time of the present draft, they are entirely subjective.

\subsection{Table 8: Basic Arithmetic Invariants}

\subsubsection*{Column A: Curve Label}
The Cremona label uniquely identifies an elliptic curve over $\mathbb{Q}$ in the form $NXm$, where $N$ is the conductor, $X$ is a letter indicating the isogeny class (a, b, c, \ldots), and $m$ is a number ordering curves within the isogeny class by discriminant. Isogenous curves have the same associated cusp form (``newform'').

For Ramanujan's $\Delta$ function, we use the label ``Delta''.

Code:
\begin{lstlisting}
label = E_cremona.label()  # Line 267
# For Delta: label = "Delta"  # Line 176
\end{lstlisting}

References: The Cremona labeling scheme is documented in \cite{cremona:algorithms}. Isogeny class structure is discussed in \cite[Chapter VII]{silverman:aec}.

\subsubsection*{Column B: Minimal Weierstrass Form}
The minimal (global) Weierstrass model of the curve, reported as the coefficient
vector $[a_1, a_2, a_3, a_4, a_6]$ in the equation
\[
  y^2 + a_1 xy + a_3 y = x^3 + a_2 x^2 + a_4 x + a_6.
\]

Code:
\begin{lstlisting}
E = EllipticCurve(label)        # e.g. EllipticCurve('37a1')
ainvs = list(E.ainvs())         # returns [a1, a2, a3, a4, a6]
\end{lstlisting}

References: Minimal Weierstrass models: \cite[Section VII.1]{silverman:aec}.
Uniqueness: \cite[Proposition VIII.8.2]{silverman:aec}.

\subsubsection*{Column C: Weight}
The weight of the associated modular form.
Code:
\begin{lstlisting}
weight = 2  # All elliptic curves, Line 286
# For Delta: weight = 12  # Line 181
\end{lstlisting}

References: Modularity Theorem: \cite{wiles:fermat, tay-wil, BCDT}. Weight of modular forms: \cite[Chapter 5]{diamond-shurman}.

\subsubsection*{Column D: Torsion Order}
The order $|E(\mathbb{Q})_{\text{tors}}|$ of the finite torsion subgroup of the Mordell-Weil group.
Code:
\begin{lstlisting}
torsion_order = int(E.torsion_order())  # Line 289
# For Delta: torsion_order = "N.A."  # Line 178
\end{lstlisting}

 References: Mazur's theorem \cite{mazur:torsion}. Computational methods: \cite[Algorithm 7.4.12]{cohen:course}.

\subsubsection*{Column E: Torsion Structure}
The group structure of $E(\mathbb{Q})_{\text{tors}}$ as an abelian group, expressed in the form $\mathbb{Z}/n_1\mathbb{Z} \times \cdots \times \mathbb{Z}/n_k\mathbb{Z}$ where $n_i | n_{i+1}$ (invariant factors).

Code:
\begin{lstlisting}
def torsion_structure_string(E):
    T = E.torsion_subgroup()
    inv = T.invariants()  # Returns tuple of invariant factors
    if len(inv) == 0:
        return "trivial"
    else:
        return " x ".join([f"Z/{n}Z" for n in inv])
# Line 101-109
\end{lstlisting}

References: The SageMath method \texttt{torsion\_subgroup()} implements the algorithm from \cite[Section 8.7]{washington:elliptic}.
Structure theorem for finitely generated abelian groups: \cite[Theorem 5.2]{dummit-foote}. Computation: \cite{washington:elliptic}.

\subsubsection*{Column F: Rank}
The rank $r$ of the Mordell-Weil group $E(\mathbb{Q})$.
Code:
\begin{lstlisting}
try:
    rank = int(E.rank())
    rank_certain = True
except Exception:
    rank = -1
    rank_certain = False
# Lines 293-298
\end{lstlisting}

References: SageMath's \texttt{E.rank()} implements the algorithm of \cite{cremona:algorithms}, which combines 2-descent with verification using the BSD conjecture for rank 0 and 1 cases.
Mordell-Weil theorem: \cite{mordell, weil, silverman:aec}. BSD conjecture: \cite{BSD}. Computational methods: \cite{cremona:algorithms}.

\subsection{Table 9: Mordell-Weil Generators and Galois Representations}

\subsubsection*{Column A: Curve Label}
Same as column~A of Table~8.

\subsubsection*{Column B: $j$-Invariant}
The $j$-invariant $j(E) \in \mathbb{Q}$.

Code:
\begin{lstlisting}
E = EllipticCurve(label)
j = E.j_invariant()            # returns an element of QQ
\end{lstlisting}

References: $j$-invariant and the $b$-, $c$-invariants: \cite[Section III.1]{silverman:aec}.
Relation to CM: \cite[Appendix A]{silverman:advanced}.

\subsubsection*{Column C: Generator Canonical Heights}
N\'{e}ron-Tate height $\hat{h}(P)$. 

Code:
\begin{lstlisting}
def generator_heights_string(E):
    try:
        gens = E.gens()
        if len(gens) == 0:
            return "none"
        heights = [float(g.height()) for g in gens]  # CORRECTED
        return "|".join([f"{h:.4f}" for h in heights])
    except Exception as ex:
        return f"error: {ex}"
# Lines 111-119
\end{lstlisting}

References: Canonical height theory: \cite[Chapter VIII]{silverman:aec}. Computational methods: \cite{cremona:algorithms, silverman:computing}.

\subsubsection*{Column D: Regulator}
The determinant of the height pairing matrix.

Code:
\begin{lstlisting}
try:
    reg = float(E.regulator())
except Exception:
    reg = -1.0
# Lines 306-309
\end{lstlisting}

References: Regulator definition: \cite[Section X.6]{silverman:aec}. BSD conjecture: \cite{BSD, gross-zagier}.

\subsubsection*{Column E: Galois Representation Surjective for All Primes?}
For a prime $\ell$, this is the Galois representation
$$\rho_{E,\ell}: \text{Gal}(\overline{\mathbb{Q}}/\mathbb{Q}) \to \text{GL}_2(\mathbb{Z}/\ell\mathbb{Z})$$
arising from the action of the absolute Galois group on the $\ell$-torsion $E[\ell]$. 

Code:
\begin{lstlisting}
try:
    rho = E.galois_representation()
    non_surj = list(rho.non_surjective())
    gal_surjective = (len(non_surj) == 0)
except Exception as ex:
    gal_surjective = "error"
# Lines 313-317
\end{lstlisting}

References: Serre's open image theorem: \cite{serre:galois, serre:abelian}. Computational aspects: \cite{sutherland:galois}.

\subsection{Table 10: Galois Exceptional Primes and Modular Data}
\subsubsection*{Column B: Galois Image Type}
A classification of the image of the mod-$\ell$ Galois representation. Possible types: ``surjective'' ($\text{im}(\rho_{E,\ell}) = \text{GL}_2(\mathbb{Z}/\ell\mathbb{Z})$ for all $\ell$); ``non-surjective at $[\ell_1, \ldots, \ell_k]$'' (list of exceptional primes); or ``CM'' (curve has complex multiplication, image contained in normalizer of split/non-split Cartan subgroup). 
The numbers in brackets are the lists of the finite set of primes $\ell$ for which $\rho_{E,\ell}$ is not surjective. \cite{serre:galois, serre:abelian, sutherland:galois} For non-CM curves over $\mathbb{Q}$, this set is always finite (Serre). 

Code:
\begin{lstlisting}
non_surj = list(rho.non_surjective())
gal_exceptional_primes = "|".join([str(p) for p in non_surj]) \
    if non_surj else "none"
# Lines 318-319
\end{lstlisting}

\begin{lstlisting}
def galois_image_type(E):
    try:
        rho = E.galois_representation()
        if E.has_cm():
            return "CM"
        non_surj = rho.non_surjective()
        if len(non_surj) == 0:
            return "surjective"
        else:
            return f"non-surjective at {list(non_surj)}"
    except Exception as ex:
        return f"error: {ex}"
# Lines 121-131
\end{lstlisting}

References: Image classifications: \cite{serre:galois, serre:abelian, sutherland:galois}
CM theory: 
\cite[Chapter II]{silverman:advanced}.

\subsubsection*{Column C: Index of $\Gamma_0(N)$ in $\text{SL}_2(\mathbb{Z})$}
($N$ is the conductor of the curve.)
Code:
\begin{lstlisting}
def X0N_data(N):
    try:
        G = Gamma0(N)
        idx = G.index()
        genus = G.genus()
        cusps = G.ncusps()
        return idx, genus, cusps
    except Exception as ex:
        return "error", "error", "error"
# Lines 133-141
\end{lstlisting}

References: Congruence subgroups: \cite[Chapter 1]{diamond-shurman}. Index formula: \cite[Proposition 1.4.4]{diamond-shurman}.
\clearpage
\subsection{Table 11: Modular Curves and L-functions}

\subsubsection*{Column B: Genus of $X_0(N)$} $N$ is the conductor.
$X_0(N)$ is the compactification of the quotient $\Gamma_0(N) \backslash \mathbb{H}$ where $\mathbb{H}$ is the upper half-plane. 

Code:
\begin{lstlisting}
G = Gamma0(N)
genus = G.genus()
# Line 136
\end{lstlisting}

References: Modular curves: \cite[Chapter 3]{diamond-shurman}. Genus formula: \cite[Proposition 3.8.2]{diamond-shurman}.

\subsubsection*{Column C: Number of Cusps of $X_0(N)$} 

Code:
\begin{lstlisting}
cusps = G.ncusps()
# Line 137
\end{lstlisting}

References: Cusp counting: \cite[Proposition 3.8.3]{diamond-shurman}.

\subsubsection*{Column D: Has Complex Multiplication?}

Code:
\begin{lstlisting}
try:
    is_cm = E.has_cm()
    cm_disc = int(E.cm_discriminant()) if is_cm else 0
except Exception:
    is_cm = "error"
    cm_disc = "error"
# Lines 326-330
\end{lstlisting}

References: CM theory: \cite[Chapter II]{silverman:advanced}, \cite{cox:primes}. The 13 CM $j$-invariants: \cite[Appendix A]{silverman:advanced}.

\subsubsection*{Column E: CM Discriminant}
If $E$ does not have CM, this field is $0$.

Code:
\begin{lstlisting}
cm_disc = int(E.cm_discriminant()) if is_cm else 0
# Line 328
\end{lstlisting}

\subsubsection*{Column F: Root Number}
The root number $w \in \{+1, -1\}$.
Code:
\begin{lstlisting}
try:
    root_number = int(E.root_number())
except Exception:
    root_number = "error"
# Lines 333-336
\end{lstlisting}

References: Functional equation: \cite[Chapter 16]{washington:elliptic}. Root number and BSD: \cite{BSD, gross-zagier}. Computational methods: \cite[Section 2.14]{cremona:algorithms}.

\subsection*{ Potential issues}

\begin{enumerate}
\item Rank computation: For curves of large rank ($\geq 2$), \texttt{E.rank()} may be slow or fail. The BSD conjecture is used heuristically for rank 0 and 1. All curves in this dataset ($N \leq 54$) have rank $\leq 1$, so all rank computations are provably correct.

\item Generator computation: For rank 1 curves, \texttt{E.gens()} returns provably correct generators using Cremona's algorithm. Heights are computed to machine precision ($\approx 10^{-15}$ relative error).

\item Galois representations: The method \texttt{non\_surjective()} is provably correct for the primes it returns, but could theoretically miss exceptional primes beyond SageMath's computational limits. For $N \leq 54$, all computations are complete.

\end{enumerate}


\section{Tables}
\begin{table}[H]
\centering
\begin{tabular}{rrrrrrrrr}
1 & 1& 1& 1& 1& 1& 1& 1& 1\\
1 & 2& 2& 2& 2& 2& 2& 2& 2\\
2 & 3& 3& 3& 3& 3& 3& 3& 3\\ 
3 & 0& 0& 0& 0& 0& 0& 0& 0\\
0 & 1& 1& 1& 1& 1& 1& 1& 1\\
1 & 2& 2& 2& 2& 2& 2& 2& 2\\ 
2 & 3& 3& 3& 3& 3& 3& 3& 3\\ 
3 & 0& 0& 0& 0& 0& 0& 0& 0\\
0 & 1& 1& 1& 1& 1& 1& 1& 1\\
1 & 2& 2& 2& 2& 2& 2& 2& 2\\
2 & 3& 3& 3& 3& 3
\end{tabular}
\caption{Residues modulo 4 of abscissas from the
lower envelope of Figure 2.}
\end{table}
\begin{table}[H]
\centering
\begin{tabular}{|l|c|c|c|c|}
\hline
Curve & $c$ & Period & Ratio & Power \\
\hline
$\tau(p_n)$ & 0 & 4.14  & 1.000 & $1.27 \times 10^4$ \\
            &   & 2.06  & 2.010 & $3.98 \times 10^3$ \\
            &   & 3.68  & 1.125 & $1.34 \times 10^3$ \\
            &   & 37.25 & 0.111 & $1.23 \times 10^3$ \\
\hline
$\tau(p_n)$ & 1 & 4.11  & 1.000 & $5.65 \times 10^4$ \\
            &   & 2.06  & 1.995 & $8.60 \times 10^3$ \\
            &   & 3.69  & 1.114 & $5.33 \times 10^3$ \\
\hline
$\tau(p_n)$ & 2 & 4.11  & 1.000 & $5.40 \times 10^4$ \\
            &   & 2.06  & 1.995 & $7.63 \times 10^3$ \\
            &   & 3.69  & 1.114 & $4.62 \times 10^3$ \\
\hline
$\tau(p_n)$ & 3 & 4.11  & 1.000 & $5.61 \times 10^4$ \\
            &   & 2.06  & 1.995 & $8.37 \times 10^3$ \\
            &   & 3.69  & 1.114 & $5.19 \times 10^3$ \\
\hline
\end{tabular}
\caption{Fourier analysis of log-transformed, detrended minimum moduli of eigenvalues of $\chi(|J^{(c)}|)$ for $h(n) = \tau(p_n)$. Data spans $n = 2$ to 400 (399 primes) for fixed-$c$ analyses ($c=0,1,2,3$). For $c=0$, only 298 primes were analyzed. The polynomial secular trend (Table 6) was removed before computing the Fourier transform. The Ratio column shows the ratio of the fundamental period to the period in question.}
\label{tab:fourier_detrended}
\end{table}
\begin{table}[H]
\centering
\begin{tabular}{|l|c|c|c|}
\hline
Curve & Period & Ratio & Power \\
\hline
11a1  & 11.76  & 1.000 & $3.83 \times 10^4$ \\
      & 5.97   & 1.970 & $4.15 \times 10^3$ \\
      & 3.96   & 2.970 & $4.10 \times 10^3$ \\
      & 2.96   & 3.973 & $2.40 \times 10^3$ \\
\hline
14a1  & 5.63   & 1.000 & $3.41 \times 10^4$ \\
      & 2.80   & 2.011 & $4.77 \times 10^3$ \\
      & 2.15   & 2.619 & $3.52 \times 10^3$ \\
\hline
15a1  & 30.77  & 1.000 & $3.92 \times 10^4$ \\
      & 15.38  & 2.000 & $9.26 \times 10^3$ \\
      & 10.26  & 2.999 & $5.06 \times 10^3$ \\
      & 7.69   & 4.001 & $2.78 \times 10^3$ \\
\hline
19a   & 2.84   & 1.000 & $4.09 \times 10^4$ \\
      & 3.39   & 0.838 & $4.04 \times 10^3$ \\
      & 17.39  & 0.163 & $2.12 \times 10^3$ \\
\hline
24a1  & 5.30   & 1.000 & $9.83 \times 10^4$ \\
\hline
37a1  & 6.56   & 1.000 & $3.58 \times 10^4$ \\
      & 3.31   & 1.982 & $4.83 \times 10^3$ \\
      & 2.43   & 2.700 & $4.31 \times 10^3$ \\
\hline
43a1  & 7.30   & 1.000 & $4.49 \times 10^4$ \\
      & 3.65   & 2.000 & $5.34 \times 10^3$ \\
      & 2.43   & 3.004 & $2.61 \times 10^3$ \\
\hline
53a1  & 6.35   & 1.000 & $4.00 \times 10^4$ \\
      & 2.13   & 2.981 & $5.01 \times 10^3$ \\
      & 3.20   & 1.984 & $4.34 \times 10^3$ \\
      & 2.68   & 2.370 & $2.47 \times 10^3$ \\
\hline
\end{tabular}
\caption{Fourier analysis of minimum moduli for elliptic curves using deformed matrices, with $c=1$, for all indices. The Ratio column shows the ratio of the fundamental period to the period in question. Curve 24a1 shows a single dominant peak, or a tight cluster at the edge of FFT resolution with high power. (We show the doubtful
periods in the next table.) The reader will notice that the ratios in rows 15a1 and 43a1 are nearly integers.}
\label{tab:elliptic_fourier_all}
\end{table}
\begin{table}[H]
\centering
\begin{tabular}{|c|c|}
\hline
Period & Power \\
\hline
5.2980 & $9.83 \times 10^4$ \\
5.2805 & $8.31 \times 10^4$ \\
5.3156 & $7.93 \times 10^4$ \\
5.2632 & $4.69 \times 10^4$ \\
5.3333 & $4.05 \times 10^4$ \\
5.2459 & $1.55 \times 10^4$ \\
5.3512 & $9.91 \times 10^3$ \\
5.4054 & $7.60 \times 10^3$ \\
5.3872 & $5.62 \times 10^3$ \\
5.4237 & $3.78 \times 10^3$ \\
5.1948 & $3.04 \times 10^3$ \\
5.4795 & $2.74 \times 10^3$ \\
5.2288 & $2.49 \times 10^3$ \\
5.2117 & $2.03 \times 10^3$ \\
5.4983 & $1.97 \times 10^3$ \\
5.1780 & $1.66 \times 10^3$ \\
5.3691 & $1.35 \times 10^3$ \\
5.5556 & $1.25 \times 10^3$ \\
5.4608 & $1.21 \times 10^3$ \\
5.5749 & $1.18 \times 10^3$ \\
\hline
\end{tabular}
\caption{The uncertain cluster of periods for curve 24a1 at all indices. See `curve 24a1 all indices 21feb26' in \cite{Bre25}.}
\label{tab:curve_24a1_all_periods}
\end{table}
\begin{table}[H]
\centering
\begin{tabular}{|l|c|c|c|c|}
\hline
Curve & Range & Period & Ratio & Power \\
\hline
11a1  & $n < 100$ & 2.62 & 1.000 & $1.27 \times 10^3$ \\
      & $n \geq 100$ & 2.69 & 1.000 & $3.39 \times 10^4$ \\
\hline
14a1  & All & 3.81 & 1.000 & $4.81 \times 10^4$ \\
      &     & 2.11 & 1.806 & $5.92 \times 10^3$ \\
      &     & 4.71 & 0.809 & $3.36 \times 10^3$ \\
\hline
15a1  & All & 5.70 & 1.000 & $4.75 \times 10^4$ \\
      &     & 2.85 & 2.000 & $5.13 \times 10^3$ \\
      &     & 5.78 & 0.986 & $3.30 \times 10^3$ \\
      &     & 2.105 & 2.708 & $3.61 \times 10^3$ \\
\hline
17a1  & All & 3.81 & 1.000 & $4.52 \times 10^4$ \\
      &     & 2.11 & 1.806 & $4.63 \times 10^3$ \\
\hline
\end{tabular}
\caption{Fourier analysis of minimum moduli for elliptic curves using deformed matrices, with $c=1$, for prime indices only. The Ratio column shows the ratio of the fundamental period to the period in question. For curves 11a1 and 15a1, spurious zeros in the first two data points were removed before analysis. Curve 11a1 uniquely exhibits non-stationary behavior with period evolution from 2.62 to 2.69 and a 27-fold power increase at the transition ($n \approx 100$), while other curves show stationary harmonic structure throughout. Curves 14a1 and 17a1 share matching top two periods.}
\label{tab:elliptic_fourier_primes}
\end{table}
\begin{table}[H]
\centering
\begin{tabular}{|l|c|c|c|c|}
\hline
Function & $c$ & Quadratic & Linear & Constant \\
\hline
$\tau(p_n)$ & 0 & $-5.60 \times 10^{-6}$ & $2.02 \times 10^{-3}$ & 2.806 \\
$\tau(p_n)$ & 1 & $-1.06 \times 10^{-6}$ & $5.69 \times 10^{-4}$ & 2.871 \\
$\tau(p_n)$ & 2 & $-2.41 \times 10^{-6}$ & $1.25 \times 10^{-3}$ & 2.820 \\
$\tau(p_n)$ & 3 & $-4.85 \times 10^{-6}$ & $1.84 \times 10^{-3}$ & 2.822 \\
\hline
\end{tabular}
\caption{Secular trends in log-transformed minimum moduli for data from Table 2. The fitted polynomial is $\log(\text{min modulus}) \approx a_2 n^2 + a_1 n + a_0$, where $n$ is the prime index. All cases show small positive linear coefficients, indicating minimum moduli increase slightly with $n$ (moving away from zero). The quadratic terms are negligible.}
\label{tab:secular_trends}
\end{table}
\begin{table}[H]
\centering
\begin{tabular}{|l|c|c|c|}
\hline
Function & Indices & Period & Power \\
\hline
$\tau(p_n)$ & Primes & 4.10 & $2.29 \times 10^4$ \\
\hline
\end{tabular}
\caption{Global (over $c$) Fourier analysis for $h(n) = \tau_p(n)$ and  $n = 2$ to $300, 1 \leq c \leq 2n$, degree of $D_n(c) = n$.)}
\label{tab:poly_c_fourier}
\end{table}
\small
\clearpage
\begin{longtable}{ccccccccc}
\caption{Basic arithmetic invariants for $\Delta$ and elliptic
curves with conductor $\leq 53$.
Column labels: A = Curve label, B = Minimal Weierstrass form
$[a_1,a_2,a_3,a_4,a_6]$, C = Weight, D = Torsion order, E =
Torsion structure, F = Rank. G = Deformed plots with $h(n) = a(n)$
oscillate? H = Deformed plots with $h(n) = a(1 + p_n)$ oscillate? 
K = Deformed plots with $h(n) = a(p_n)$ oscillate? 
The conductor
of $\Delta$ is $1$ and that of each other cusp form is given by in
its label. Thus the conductor of 11a1 is $11$.
\label{tab:csv1}} \\
\toprule
A & B & C & D & E & F & G & H & K\\
\midrule
\endfirsthead
\toprule
A & B & C & D & E & F & G & H & K\\
\midrule
\endhead
\bottomrule
\endfoot
$\Delta$ & --- & 12 & --- & --- & --- &  N & N & Y \\
11a1 & $[0,-1,1,-10,-20]$ & 2 & 5 & $\mathbb{Z}/5\mathbb{Z}$ & 0 &  Y & Y & N \\
14a1 & $[1,0,1,4,-6]$ & 2 & 6 & $\mathbb{Z}/6\mathbb{Z}$ & 0 &  Y & Y & N\\
15a1 & $[1,1,1,-10,-10]$ & 2 & 8 & $\mathbb{Z}/2\mathbb{Z} \times \mathbb{Z}/4\mathbb{Z}$ & 0  &  Y & Y\\
17a1 & $[1,-1,1,-1,-14]$ & 2 & 4 & $\mathbb{Z}/4\mathbb{Z}$ & 0  &  N & Y\\
19a1 & $[0,1,1,0,-9]$ & 2 & 3 & $\mathbb{Z}/3\mathbb{Z}$ & 0  &  Y & N\\
20a1 & $[0,1,0,4,4]$ & 2 & 6 & $\mathbb{Z}/6\mathbb{Z}$ & 0 &  N & N\\
21a1 & $[1,0,0,-1,-2]$ & 2 & 8 & $\mathbb{Z}/2\mathbb{Z} \times \mathbb{Z}/4\mathbb{Z}$ & 0 & N & Y\\
24a1 & $[0,-1,0,-4,4]$ & 2 & 8 & $\mathbb{Z}/2\mathbb{Z} \times \mathbb{Z}/4\mathbb{Z}$ & 0 &  Y & N\\
26a1 & $[1,-1,1,-3,3]$ & 2 & 3 & $\mathbb{Z}/3\mathbb{Z}$ & 0 & N & Y\\
26b1 & $[1,-1,1,-56,-10]$ & 2 & 7 & $\mathbb{Z}/7\mathbb{Z}$ & 0 & N  & N\\
27a1\footnote{Let $h_0 = 1$ and $h_n$ be the coefficient
of $q_*^{p_n}$ in the Fourier expansion of curve 27a1 for $n$
positive. Let $j_0 = 1$ and let $j_n$ for $n$ positive
satisfy $j_n = n  h_n - \sum_{r=0}^{n-1} j_r h_{n-r}$.
For $n \leq 400$, we observed that
$j_n = -2 \times 2^{n/2}$ for even positive $n$, and $j_n = 0$ for odd
positive $n$. Obviously, we can reverse this and
write down a conjecture for the Fourier expansion.
See \texttt{curve 27a1 on primes 1mar26} in \cite{Bre25}.
(Sadly,
the $j_n$ for $h_n = \tau(p_n)$ do not appear to have similar
behavior.)
}
& $[0,0,1,0,-7]$ & 2 & 3 & $\mathbb{Z}/3\mathbb{Z}$ & 0 & N& Y\footnote{More precisely, it
appears to us that the minima of 27a1b on primes lie on the sum of a
logarithmic curve and an oscillating one, or, the sum of at least two
oscillatory curves, one
having a much greater wavelength than the other. Fourier analysis by
Anthropic's ``Claude'' model favors the first possibility.
See \texttt{curve 27a1 on primes 2mar26}
in \cite{Bre25}.}\\
30a1 & $[1,-1,0,-3,2]$ & 2 & 6 & $\mathbb{Z}/6\mathbb{Z}$ & 0 \\
32a1 & $[0,0,0,4,0]$ & 2 & 4 & $\mathbb{Z}/4\mathbb{Z}$ & 0 \\
33a1 & $[1,1,0,-2,0]$ & 2 & 4 & $\mathbb{Z}/2\mathbb{Z} \times \mathbb{Z}/2\mathbb{Z}$ & 0 \\
34a1 & $[1,1,0,-1,-1]$ & 2 & 6 & $\mathbb{Z}/6\mathbb{Z}$ & 0 \\
35a1 & $[0,1,1,1,-2]$ & 2 & 3 & $\mathbb{Z}/3\mathbb{Z}$ & 0 \\
36a1 & $[0,0,0,-3,-2]$ & 2 & 6 & $\mathbb{Z}/6\mathbb{Z}$ & 0 \\
37a1 & $[0,0,1,-1,0]$ & 2 & 1 & 0 & 1 & Y & Y\\
37b1 & $[0,1,1,-23,-50]$ & 2 & 3 & $\mathbb{Z}/3\mathbb{Z}$ & 0 \\
38a1 & $[1,-1,1,-1,0]$ & 2 & 3 & $\mathbb{Z}/3\mathbb{Z}$ & 0 \\
38b1 & $[1,1,1,-4,4]$ & 2 & 5 & $\mathbb{Z}/5\mathbb{Z}$ & 0 \\
39a1 & $[1,-1,0,0,-2]$ & 2 & 4 & $\mathbb{Z}/2\mathbb{Z} \times \mathbb{Z}/2\mathbb{Z}$ & 0 \\
40a1 & $[0,0,0,-3,2]$ & 2 & 4 & $\mathbb{Z}/2\mathbb{Z} \times \mathbb{Z}/2\mathbb{Z}$ & 0 \\
42a1 & $[1,1,0,-2,-1]$ & 2 & 8 & $\mathbb{Z}/8\mathbb{Z}$ & 0 \\
43a1 & $[0,1,1,0,0]$ & 2 & 1 & 0 & 1 & Y & Y\\
44a1 & $[0,-1,0,-5,4]$ & 2 & 3 & $\mathbb{Z}/3\mathbb{Z}$ & 0 \\
45a1 & $[1,1,1,-1,-1]$ & 2 & 2 & $\mathbb{Z}/2\mathbb{Z}$ & 0 \\
46a1 & $[1,0,1,-1,0]$ & 2 & 2 & $\mathbb{Z}/2\mathbb{Z}$ & 0 \\
48a1 & $[0,0,0,1,0]$ & 2 & 4 & $\mathbb{Z}/2\mathbb{Z} \times \mathbb{Z}/2\mathbb{Z}$ & 0 \\
49a1 & $[1,-1,0,-2,0]$ & 2 & 2 & $\mathbb{Z}/2\mathbb{Z}$ & 0 \\
50a1 & $[1,1,1,-2,-1]$ & 2 & 3 & $\mathbb{Z}/3\mathbb{Z}$ & 0 \\
50b1 & $[1,1,1,0,0]$ & 2 & 5 & $\mathbb{Z}/5\mathbb{Z}$ & 0 \\
51a1 & $[1,1,0,0,-1]$ & 2 & 3 & $\mathbb{Z}/3\mathbb{Z}$ & 0 \\
52a1 & $[0,0,0,1,-2]$ & 2 & 2 & $\mathbb{Z}/2\mathbb{Z}$ & 0 \\
53a1 & $[1,-1,1,-8,6]$ & 2 & 1 & 0 & 1 & Y & N\\
\end{longtable}
\clearpage
\begin{longtable}{ccccccccc}
\caption{Mordell-Weil generators and Galois representation surjectivity
for $\Delta$ and elliptic curves with conductor $\leq 53$.
Column labels: A = Curve label, B = $j$-invariant, C = Generator canonical heights
(approx.), D = Regulator (approx.), E = Galois rep. surjective for all
primes?, F = Deformed plots with $h(n) = a(n)$ oscillate? G = Deformed plots
with $h(n) = a(1 + p_n)$ oscillate? H = Deformed plots with $h(n) = a(p_n)$ oscillate? 
Heights shown for rank 1 curves only. For rank 0 curves: regulator = 1.
\label{tab:csv2}} \\
\toprule
A & B & C & D & E & F & G & H\\
\midrule
\endfirsthead
\toprule
A & B & C & D & E & F & G & H\\
\midrule
\endhead
\bottomrule
\endfoot
$\Delta$ & --- & --- & --- & Y & N & N & Y\\
11a1 & $-2^{12}$ & none & 1 & N & Y & Y & N\\
14a1 & $-3^3 \cdot 5^3/2^2$ & none & 1 & N & Y & Y & N\\
15a1 & $-5^2 \cdot 241^3 / (2^5 \cdot 3^3)$ & none & 1 & N & Y & Y\\
17a1 & $-17 \cdot 373^3/2^{17}$ & none & 1 & N & N & Y\\
19a1 & $-2^{15}/19$ & none & 1 & N & Y & N\\
20a1 & $2^{18}/5$ & none & 1 & N & N & N\\
21a1 & $-3^3\cdot 5^3 \cdot 101^3/(7\cdot 2^{21})$ & none & 1 & N & N & Y\\
24a1 & $2^{12}/3$ & none & 1 & N & Y & N\\
26a1 & $-17^3 \cdot 41^3 / (2 \cdot 13^3)$ & none & 1 & N & N & Y\\
26b1 & $-7\cdot 11^3/13$ & none & 1 & N & N & N\\
27a1 & $0 \footnote{\it{sic}. \rm The $j$-invariant is zero because 27a1 has complex multiplication by
$\mathbb{Z}[e^{2 \pi i/3}]$.  See Theorem ~III.10.1 for the classification
$|\mathrm{Aut}(E)| \in \{2, 4, 6\}$
and its relation to $j(E) \in \{0, 1728\}$ in \cite{Si} and 
Theorem~5.21 (p.~23) for the proof that
$\mathrm{Aut}(E) \cong \mathbb{Z}/2\mathbb{Z}$,
$\mathbb{Z}/4\mathbb{Z}$, or $\mathbb{Z}/6\mathbb{Z}$
according as $j(E) \neq 0, 1728$; $j(E) = 1728$; or $j(E) = 0$ in \cite{F}. An obvious thought is that the value zero is connected to the footnotes in the row for curve 27a1 in the previous table.}
$ & none & 1 & N & N\\
30a1 & $-2^4 \cdot 5^3 \cdot 7^3 / (3 \cdot 31)$ & none & 1 & N \\
32a1 & $1728$ & none & 1 & N \\
33a1 & $2^5 \cdot 5^3 / 3$ & none & 1 & N \\
34a1 & $-2^4 \cdot 17^3 / 3$ & none & 1 & N \\
35a1 & $-5 \cdot 29^3 / 7$ & none & 1 & N \\
36a1 & $2^4 \cdot 3^3 \cdot 5^3$ & none & 1 & N \\
37a1 & $-2^{15} \cdot 3^3 / 37$ & $0.0511$ & $0.0511$ & Y & Y & Y\\
37b1 & $-9317^3/37^3$ & none & 1 & N \\
38a1 & $-2^4 \cdot 19^3 / 3$ & none & 1 & N \\
38b1 & $-5^2 \cdot 127^3 / (2 \cdot 19)$ & none & 1 & N \\
39a1 & $-2^4 \cdot 5^3 \cdot 11^3 / (3 \cdot 13)$ & none & 1 & N \\
40a1 & $2^{10} \cdot 5^3 / 3$ & none & 1 & N \\
42a1 & $-2^4 \cdot 3^3 \cdot 5^3 \cdot 23^3 / (7 \cdot 43)$ & none & 1 & N \\
43a1 & $-2^{12} \cdot 3^3 / 43$ & $0.2043$ & $0.2043$ & N & Y & Y\\
44a1 & $2^4 \cdot 5^3 / 11$ & none & 1 & N \\
45a1 & $-5^3 \cdot 3^3 \cdot 2^4 / (5\cdot 3)$ & none & 1 & N \\
46a1 & $-2^4 \cdot 23^3 / 3$ & none & 1 & N \\
48a1 & $2^8 \cdot 3^3$ & none & 1 & N \\
49a1 & $-7^5$ & none & 1 & N \\
50a1 & $-2^4 \cdot 5^3 \cdot 7^3 / (5 \cdot 3)$ & none & 1 & N \\
50b1 & $2^4 \cdot 5^3 \cdot 3^3 / 25$ & none & 1 & N \\
51a1 & $-2^4 \cdot 5^3 \cdot 17^3 / (3 \cdot 17)$ & none & 1 & N \\
52a1 & $2^6 \cdot 3^3 \cdot 5^3 / 13$ & none & 1 & N \\
53a1 & $-2^{18} / 53$ & $0.2581$ & $0.2581$ & N & Y & N\\
\end{longtable}

\clearpage

\begin{longtable}{cccccc}
\caption{Galois representation exceptional primes and modular group data for $\Delta$ and elliptic curves with conductor $\leq 53$. 
Column labels: A = Curve label, B = Galois image type (S = surjective, NS =non-surjective, CM = has complex multiplication), C = Index of $\Gamma_0(N)$ in $\text{SL}_2(\mathbb{Z})$, D = Deformed plots with $h(n)
= a(n)$ oscillate? E = Deformed plots with $h(n) = a(1 + p_n)$ oscillate?
F = Deformed plots with $h(n) = a(p_n)$ oscillate? 
\label{tab:csv3}} \\
\toprule
A & B & C & D & E & F\\
\midrule
\endfirsthead
\toprule
A & B & C & D & E & F\\
\midrule
\endhead
\bottomrule
\endfoot
$\Delta$ & surj. (wt 12) & 1 & N & N & Y\\
11a1 & NS [5] & 12 & Y & Y & N\\
14a1 & NS [2, 3] & 24 & Y & Y & N\\
15a1 & NS [2, 5] & 24 & Y & Y\\
17a1 & NS [2] & 24 & N & Y\\
19a1 & NS [3] & 24 & Y & N\\
20a1 & NS [2, 3] & 48 &  N & N\\
21a1 & NS [2, 7] & 48 & N & Y\\
24a1 & NS [2, 5] & 48 & Y & N\\
26a1 & NS [3, 13] & 48 & N & Y \\
26b1 & NS [3, 7] & 48 & N & N\\
27a1 & CM & 36 & N\\
30a1 & NS [2, 3, 5] & 72 \\
32a1 & CM & 48 \\
33a1 & NS [2, 11] & 72 \\
34a1 & NS [2, 3] & 72 \\
35a1 & NS [3, 7] & 72 \\
36a1 & CM & 72 \\
37a1 & NS [37] & 48 & Y &  Y\\
37b1 & NS [3] & 48 \\
38a1 & NS [3, 19] & 72 \\
38b1 & NS [3, 5] & 72 \\
39a1 & NS [2, 13] & 96 \\
40a1 & NS [2, 5] & 96 \\
42a1 & NS [2, 3, 7] & 144 \\
43a1 & NS [43] & 56 & Y & Y\\
44a1 & NS [2, 3] & 96 \\
45a1 & NS [2, 3] & 108 \\
46a1 & NS [2, 23] & 96 \\
48a1 & NS [2, 5] & 96 \\
49a1 & CM & 84 \\
50a1 & NS [3, 5] & 120 \\
50b1 & NS [3, 5] & 120 \\
51a1 & NS [2, 3] & 96 \\
52a1 & NS [2, 13] & 144 \\
53a1 & NS [53] & 72 & Y & N \\
\end{longtable}

\clearpage

\begin{longtable}{ccccccccc}
\caption{Modular curve genus, complex multiplication, and functional equation data for $\Delta$ and elliptic curves with conductor $\leq 53$. 
Column labels: A = Curve label, B = Genus of $X_0(N)$, C = Number of cusps of $X_0(N)$, D = Complex multiplication?, E = CM discriminant (if applicable), F = Root number of L-function, G = Deformed plots with $h(n) = a(n)$ oscillate? H = Deformed plots with $h(n) = a(1 + p_n)$ oscillate?
K = Deformed plots with $h(n) = a(p_n)$ oscillate? 
\label{tab:csv4}} \\
\toprule
A & B & C & D & E & F & G & H & K\\
\midrule
\endfirsthead
\toprule
A & B & C & D & E & F & G & H & K\\
\midrule
\endhead
\bottomrule
\endfoot
$\Delta$ & 0 & 1 & N & 0 & +1 & N & N & Y\\
11a1 & 1 & 2 & N & 0 & +1 & Y & Y & N\\
14a1 & 1 & 4 & N & 0 & +1 & Y & Y & N\\
15a1 & 1 & 4 & N & 0 & +1 & Y & Y\\
17a1 & 1 & 4 & N & 0 & +1 & N & Y\\
19a1 & 1 & 3 & N & 0 & +1 & Y & N\\
20a1 & 1 & 6 & N & 0 & +1 & N & N\\
21a1 & 1 & 8 & N & 0 & +1 & N & Y\\
24a1 & 1 & 8 & N & 0 & +1 & Y & N\\
26a1 & 2 & 6 & N & 0 & +1 & N & Y\\
26b1 & 2 & 6 & N & 0 & +1 & N & N\\
27a1 & 1 & 4 & Y & -3 & +1 & N\\
30a1 & 3 & 12 & N & 0 & +1 \\
32a1 & 1 & 6 & Y & -4 & +1 \\
33a1 & 3 & 12 & N & 0 & +1 \\
34a1 & 3 & 9 & N & 0 & +1 \\
35a1 & 3 & 12 & N & 0 & +1 \\
36a1 & 3 & 12 & Y & -3 & +1 \\
37a1 & 2 & 4 & N & 0 & -1 & Y & Y\\
37b1 & 2 & 4 & N & 0 & +1 \\
38a1 & 3 & 9 & N & 0 & +1 \\
38b1 & 3 & 9 & N & 0 & +1 \\
39a1 & 3 & 16 & N & 0 & +1 \\
40a1 & 3 & 12 & N & 0 & +1 \\
42a1 & 5 & 24 & N & 0 & +1 \\
43a1 & 3 & 4 & N & 0 & -1 & Y & Y\\
44a1 & 3 & 12 & N & 0 & +1 \\
45a1 & 5 & 12 & N & 0 & +1 \\
46a1 & 5 & 12 & N & 0 & +1 \\
48a1 & 5 & 16 & N & 0 & +1 \\
49a1 & 3 & 8 & Y & -7 & +1 \\
50a1 & 5 & 15 & N & 0 & +1 \\
50b1 & 5 & 15 & N & 0 & +1 \\
51a1 & 5 & 12 & N & 0 & +1 \\
52a1 & 5 & 18 & N & 0 & +1 \\
53a1 & 5 & 6 & N & 0 & -1 & Y & N\\
\end{longtable}
\begin{table}[h]
\centering
\begin{tabular}{ll}
\hline
\textbf{Figure} & \textbf{Jupyter SageMath notebook} \\
\hline
Figure 1  & \texttt{tauprime=h logarithmic envelopes prec100 n400 1dec25} \\
Figure 2  & \texttt{tauprime c=2 4feb26} \\
Figure 3  & \texttt{tauprime c=2 4feb26} \\
Figure 4  & \texttt{tauprime c=3 4feb26} \\
Figure 5  & \texttt{tauprime many c's 18feb26} \\
Figure 6  & \texttt{tauprime controls 22feb26} \\
Figure 7  & \texttt{undeformed primeTau 5oct25} \\
Figure 8A  & \texttt{curve 11a1 29jan26} \\
Figure 8B  & \texttt{curve 11a1 $h(n)=a(p_n)$ 9mar26
} \\
Figure 8C  & \texttt{curve 11a1 $h(n)=a(p_n+1)$ 8mar26
} \\
Figure 9A & \texttt{curve 14a1 9feb26} \\
Figure 9B  & \texttt{curve 14a1 $h(n)=a(p_n)$. 9mar26
} \\
Figure 9C  & \texttt{curve 14a1 $h(n)=a(p_n+1)$. 9mar26
} \\
Figure 10A & \texttt{curve 15a1 10feb26} \\
Figure 10B & \texttt{curve 15a1 on primes 10feb26} \\
Figure 11A & \texttt{curve 17a1 30jan26} \\
Figure 11B & \texttt{curve 17a1 on primes 2feb26} \\
Figure 12A & \texttt{curve 19a1 31jan26} \\
Figure 12B & \texttt{curve 19a1 on primes 21feb26} \\
Figure 13A & \texttt{curve 20a1 11feb26} \\
Figure 13B & \texttt{curve 20a1 on primes 10feb26} \\
Figure 14A & \texttt{curve 21a1 11feb26} \\
Figure 14B & \texttt{curve 21a1 on primes 27feb26}\\
Figure 15A & \texttt{curve\_24a1 all indices 21feb26} \\
Figure 15B & \texttt{curve 24a1 on primes 23feb26} \\
Figure 16A & \texttt{curve 26a1 28feb26}\\
Figure 16B & \texttt{curve 26a1 on primes 28feb26} \\
Figure 17A & \texttt{curve 26b1 28feb26}\\
Figure 17B & \texttt{curve 26b1 on primes} \\
Figure 18A & \texttt{crv 27a1 28feb26} \\
Figure 18B & \texttt{curve 27a1 on primes 1mar26no2}\\
Figure 19A & \texttt{curve 37a1 1feb26} \\
Figure 19B &  \texttt{curve 37a1 on primes 23feb26} \\
Figure 20A & \texttt{curve 43a1no2 22feb26} \\
Figure 20B & \texttt{curve 43a1 on primes 28feb26} \\
Figure 21A & \texttt{crv 53a1 2feb26} \\
Figure 21B & \texttt{curve 53a1 on primes 28feb26} \\
Figure 22 & \texttt{deformed tau 18oct25} \\
\hline
\end{tabular}
\caption{Figure references. The notebooks are in our repository \cite{Bre25}.}
\label{tab:figure-references}
\end{table}

\section{Figures}
\begin{figure}[H]
    \centering
    \includegraphics[width=1\textwidth]{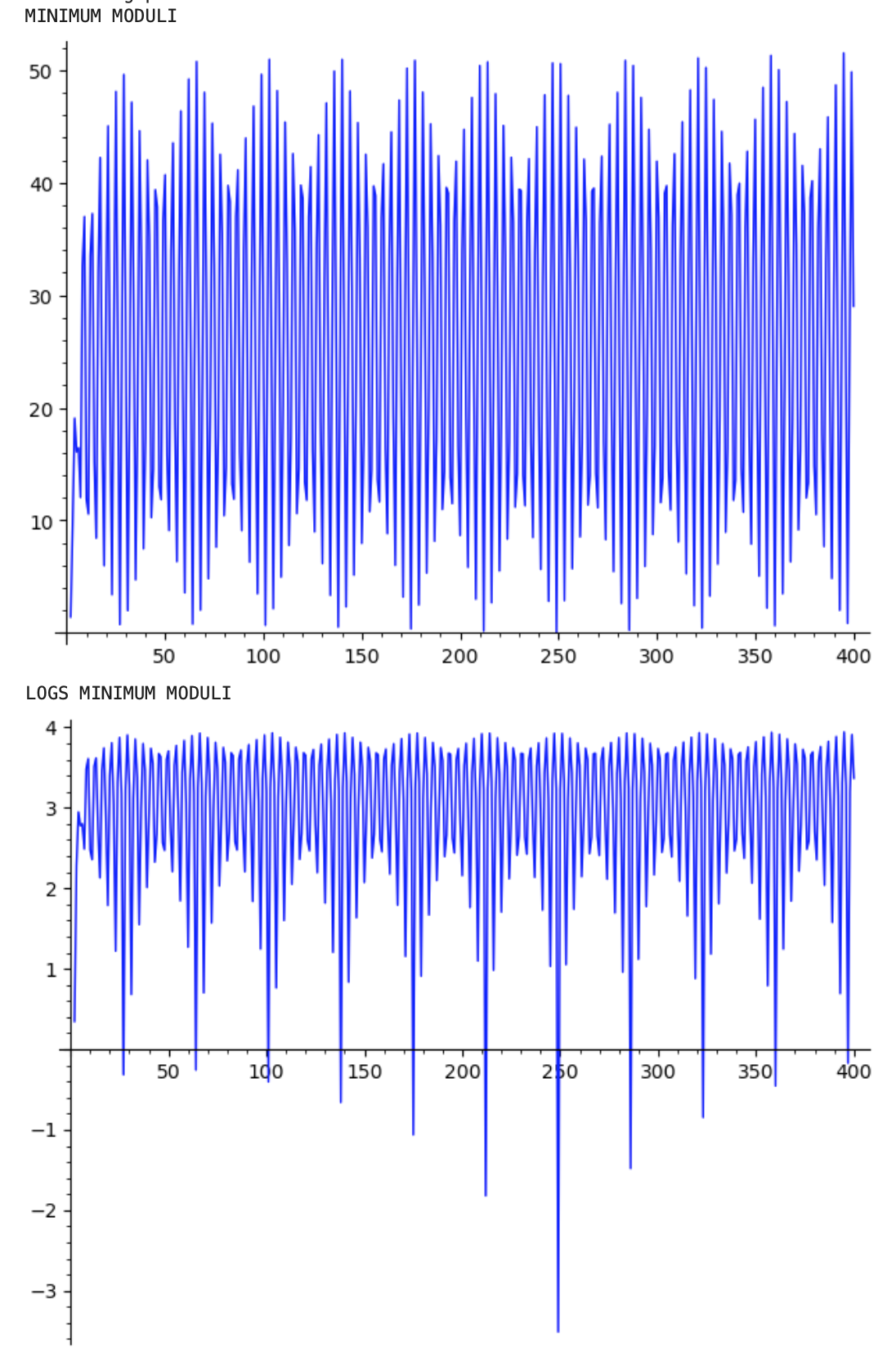}
    \caption{Minimum moduli for $h(n)=\tau(p_n)$ with $c=1$.}
    \label{fig:minima}
\end{figure}
\begin{figure}[H]
    \centering
    \includegraphics[width=1\textwidth]{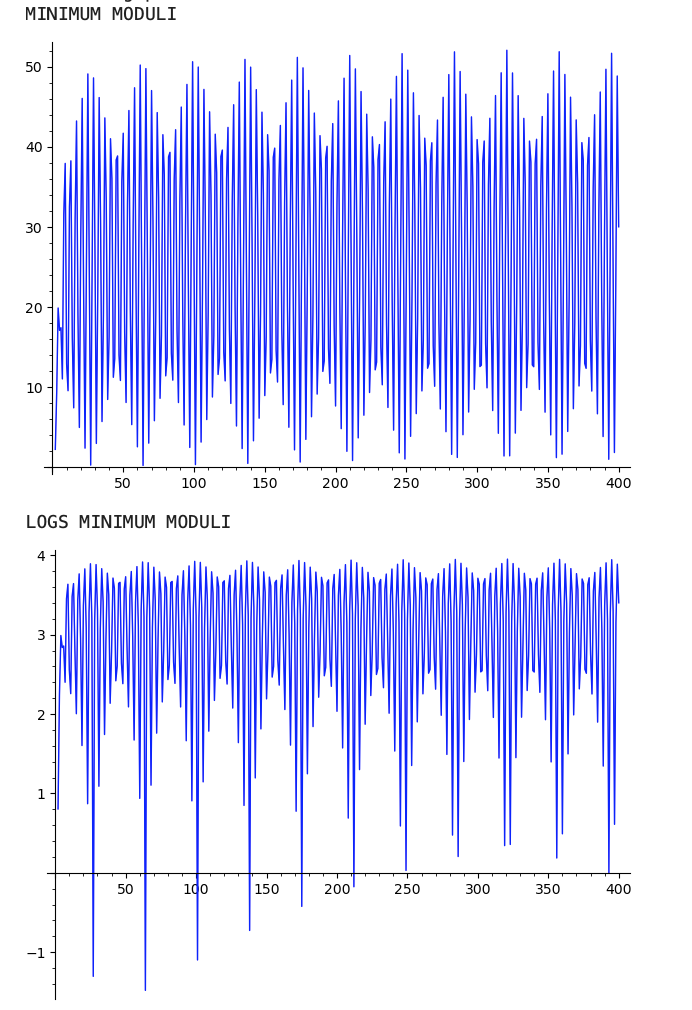}
    \caption{Minimum moduli and their logarithms for $h(n) = \tau(p_n)$
   with $c = 2$.} 
    \label{fig:tauprime_c_2}
\end{figure}
\begin{figure}[H]
    \centering
    \includegraphics[width=1\textwidth]{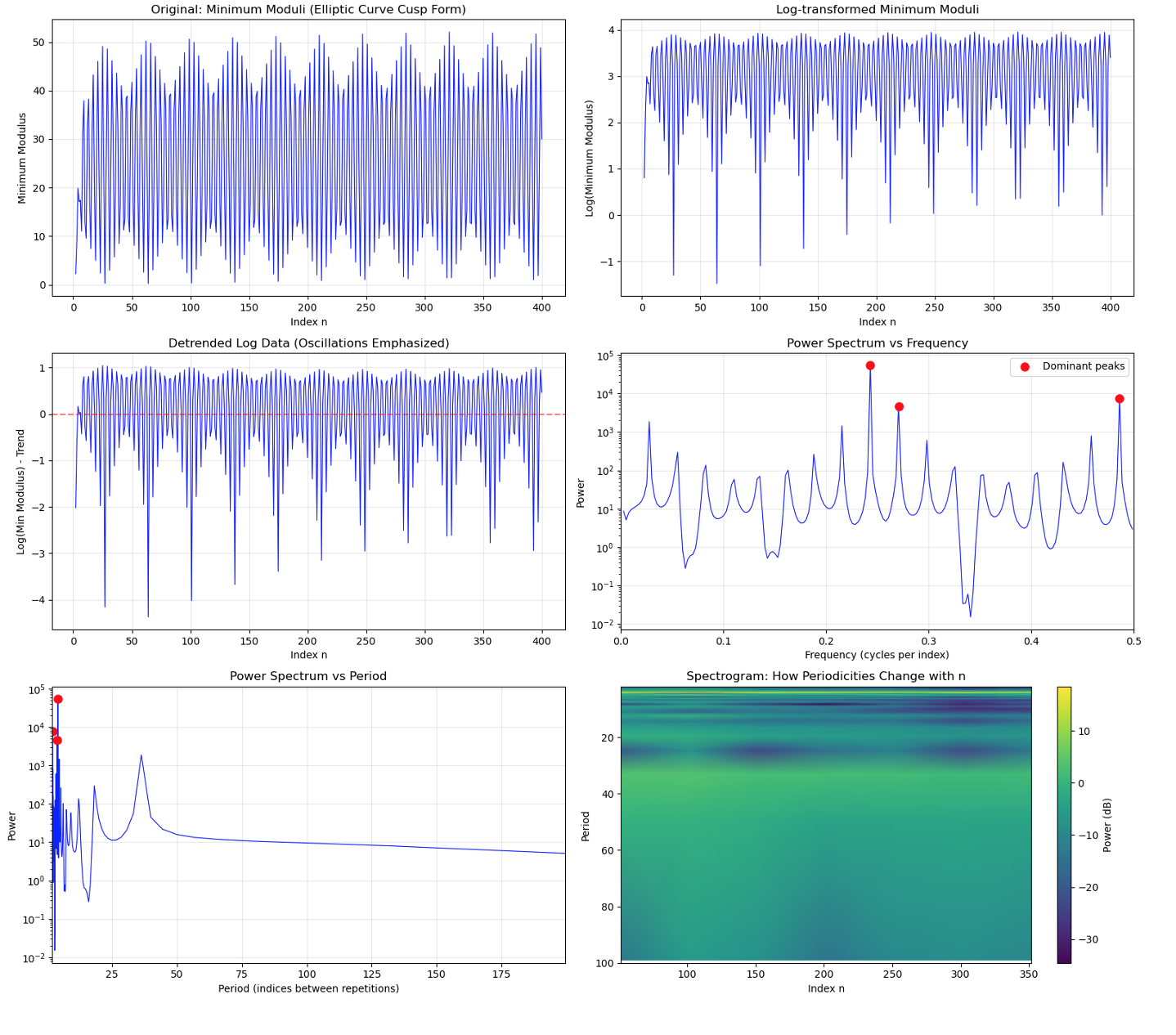}
    \caption{Before and after de-trending
    for $h(n) = \tau(p_n)$ with $c = 2$.}  
    \label{fig:trending_vs_non}
\end{figure}
\begin{figure}[H]
    \centering
    \includegraphics[width=1\textwidth]{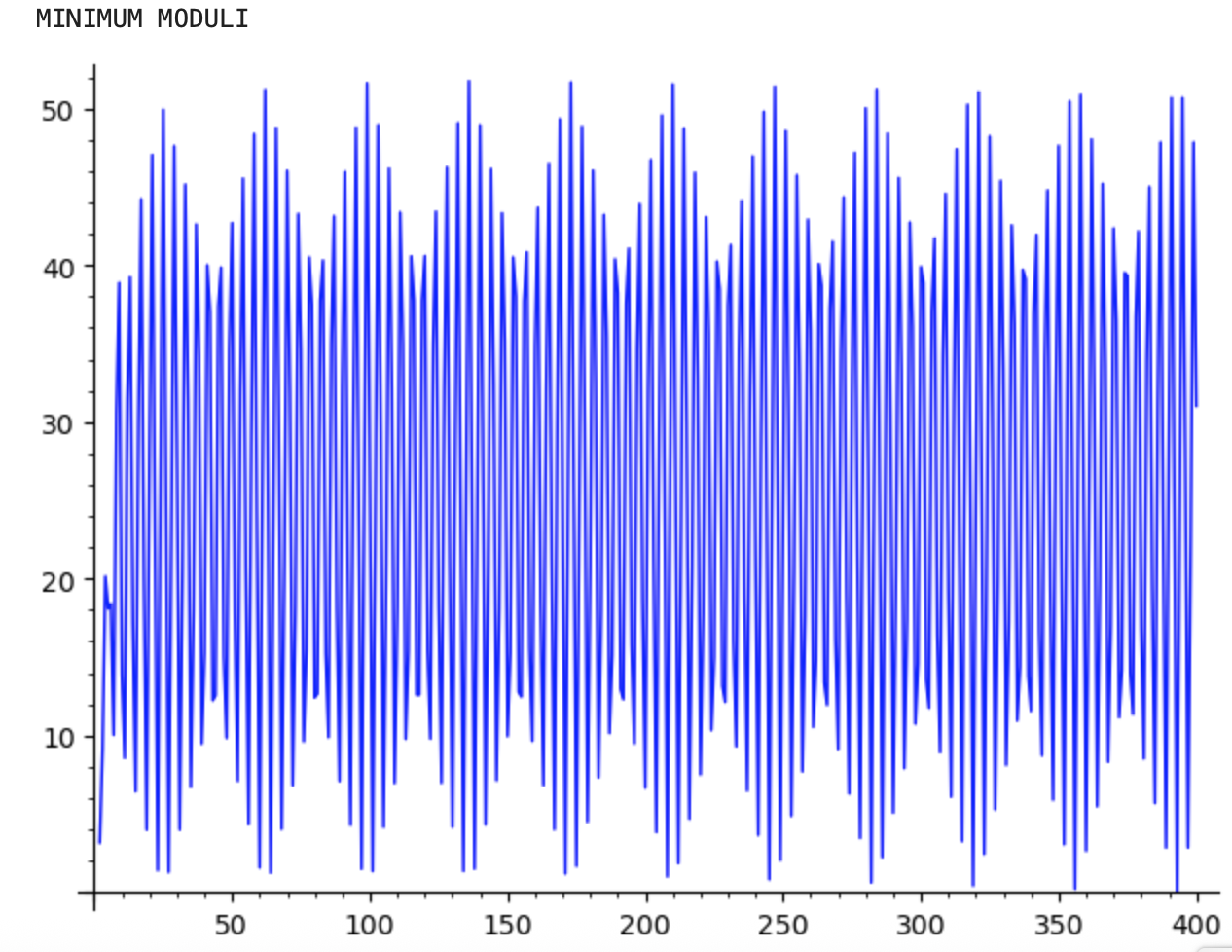}
    \caption{Minimum moduli and their logarithms for $h(n) = \tau(p_n)$
   with $c = 3$.} 
    \label{fig:tauprime_c_3}
\end{figure}
\begin{figure}[H]
    \centering
    \includegraphics[width=1\textwidth]{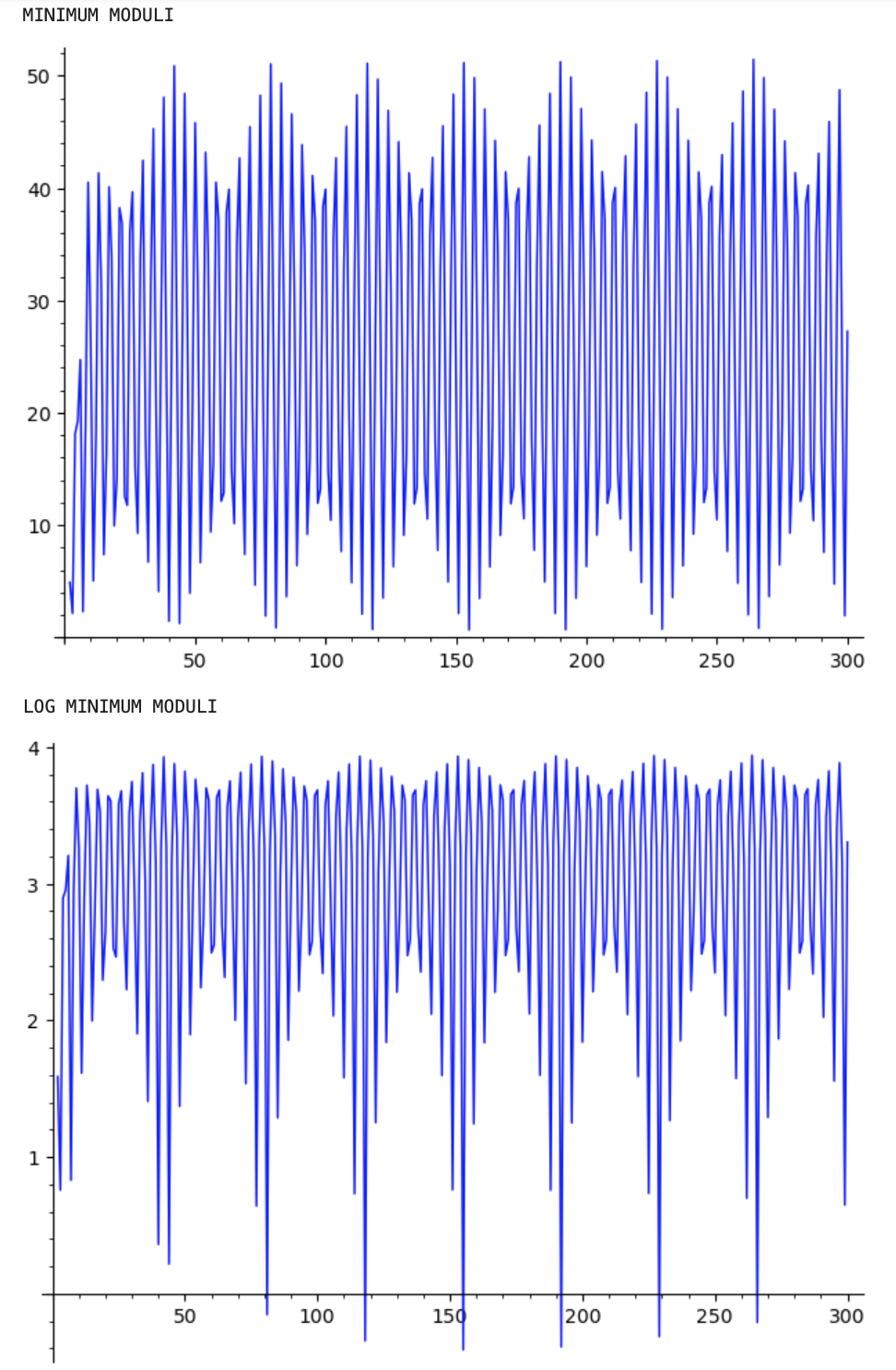}
    \caption{Minimum moduli over many values of $c$ for $h(n) = \tau_p(n)$.}
    \label{fig:tauprime_many_c}
\end{figure}
\begin{figure}[H]
    \centering
    \includegraphics[width=1\textwidth]{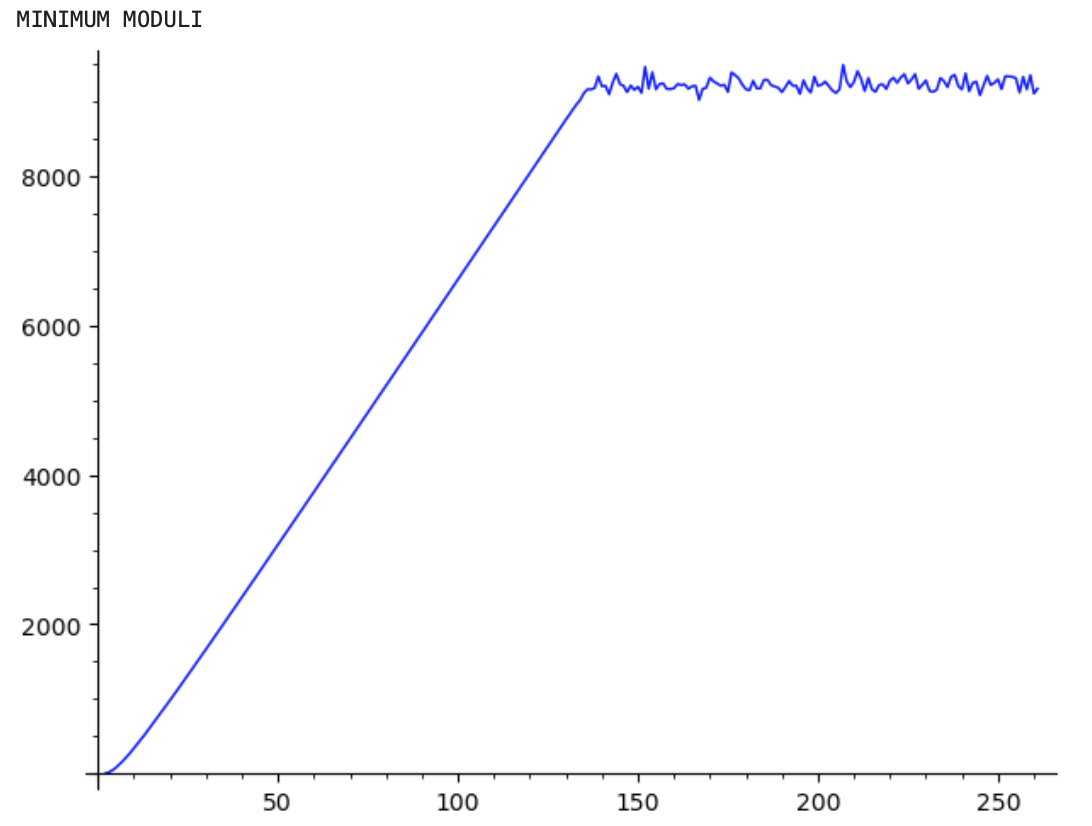}
    \caption{Experimental control: minimum moduli for $h(n) = \tau(p_n+ 1)$ with $c = 1$.} 
    \label{fig:tauprime_control}
\end{figure}
\begin{figure}[H]
    \centering
    \includegraphics[width=1\textwidth]{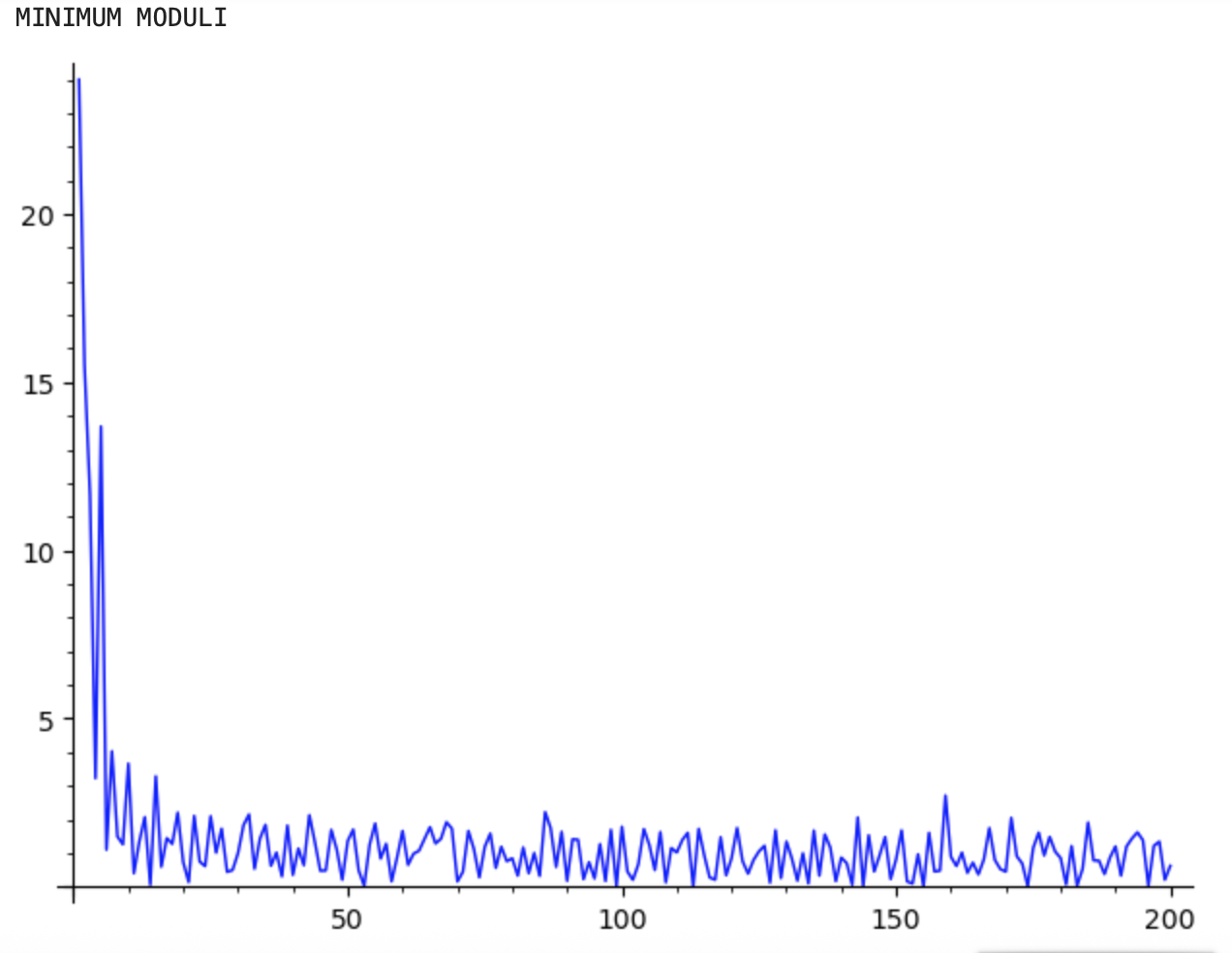}
    \caption{Minimum moduli for $h(n) = \tau(p_n)$ before deformation.}
    \label{fig:undeformed_primeTau_min_moduli}
\end{figure}
\begin{figure}[H]
    \centering
    \begin{subfigure}[t]{0.48\textwidth}
        \centering
        \includegraphics[width=\textwidth]{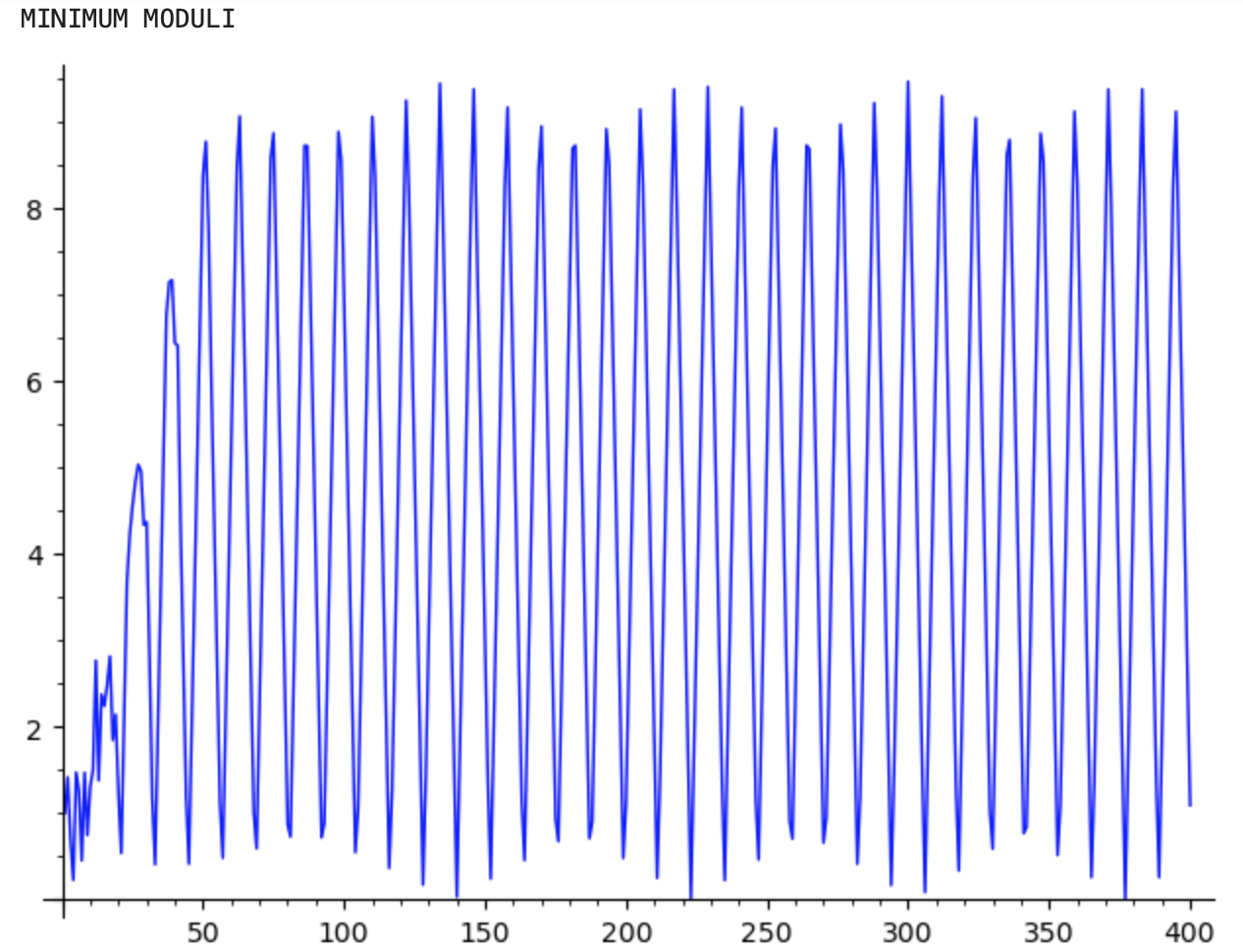}
        \caption{$h(n)=a_n$.}
        \label{fig:curve_11a1_all}
    \end{subfigure}
     \begin{subfigure}[t]{0.48\textwidth}
        \centering
        \includegraphics[width=\textwidth]{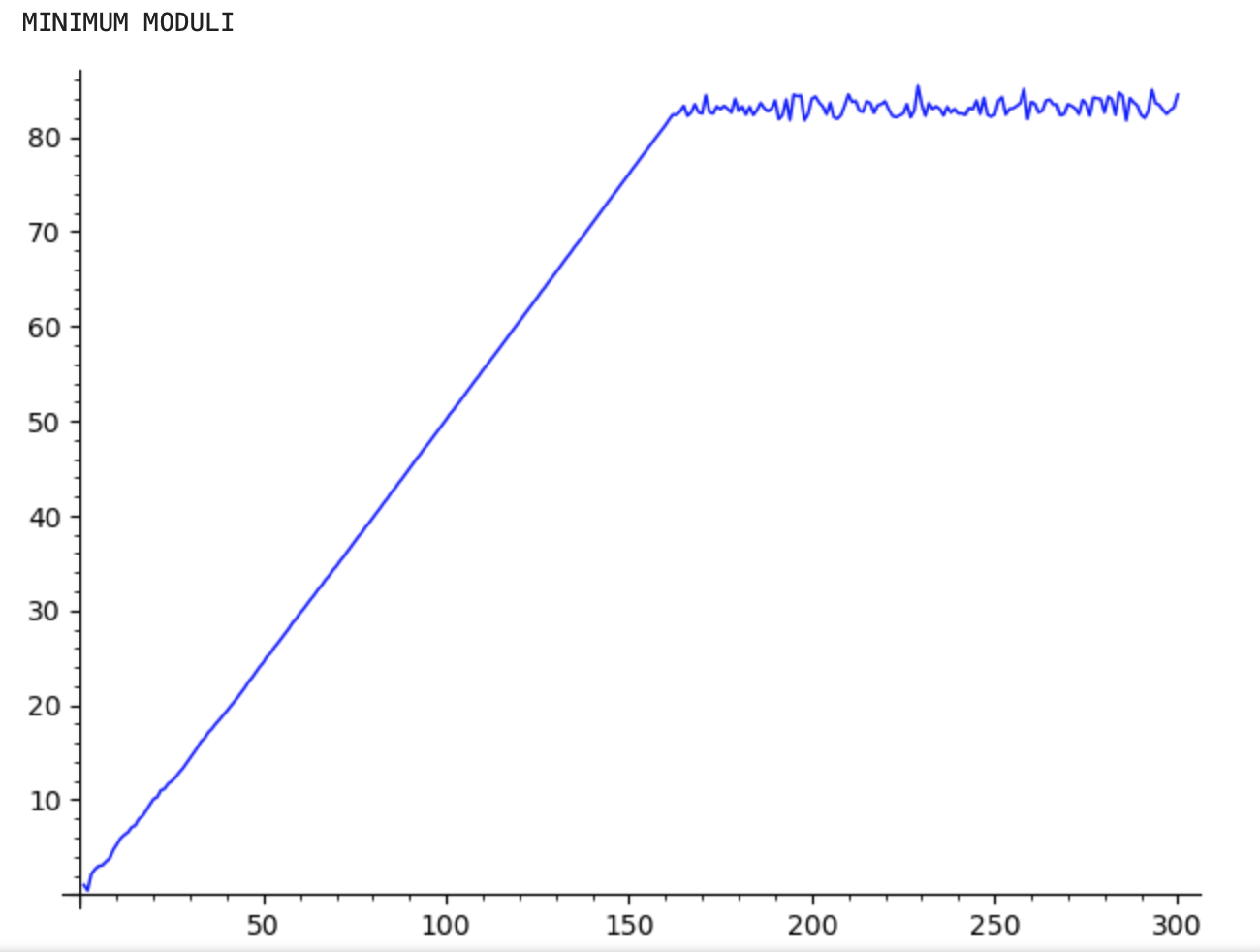}
        \caption{$h(n) = a_{p_n}$.}
        \label{fig:curve11a1_primes}
    \end{subfigure}
    \hfill
    \begin{subfigure}[t]{0.48\textwidth}
        \centering
        \includegraphics[width=\textwidth]{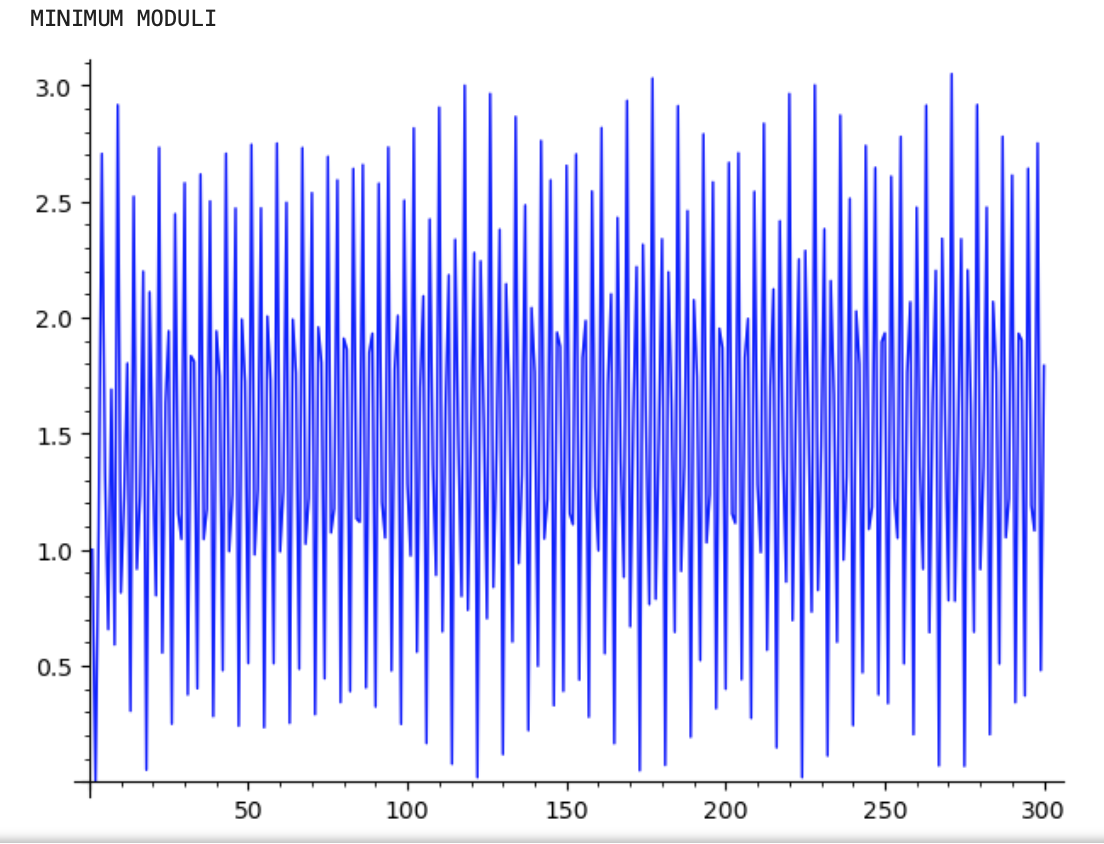}
        \caption{$h(n) = a_{1+p_n}$.}
        \label{fig:curve11a1_shift_minus_1}
    \end{subfigure}
    \caption{Curve 11a1 with $c = 1$. }
    \label{fig:comparison_crv11a1}
\end{figure}
\begin{figure}[H]
    \centering
    \begin{subfigure}[t]{0.48\textwidth}
        \centering
        \includegraphics[width=\textwidth]{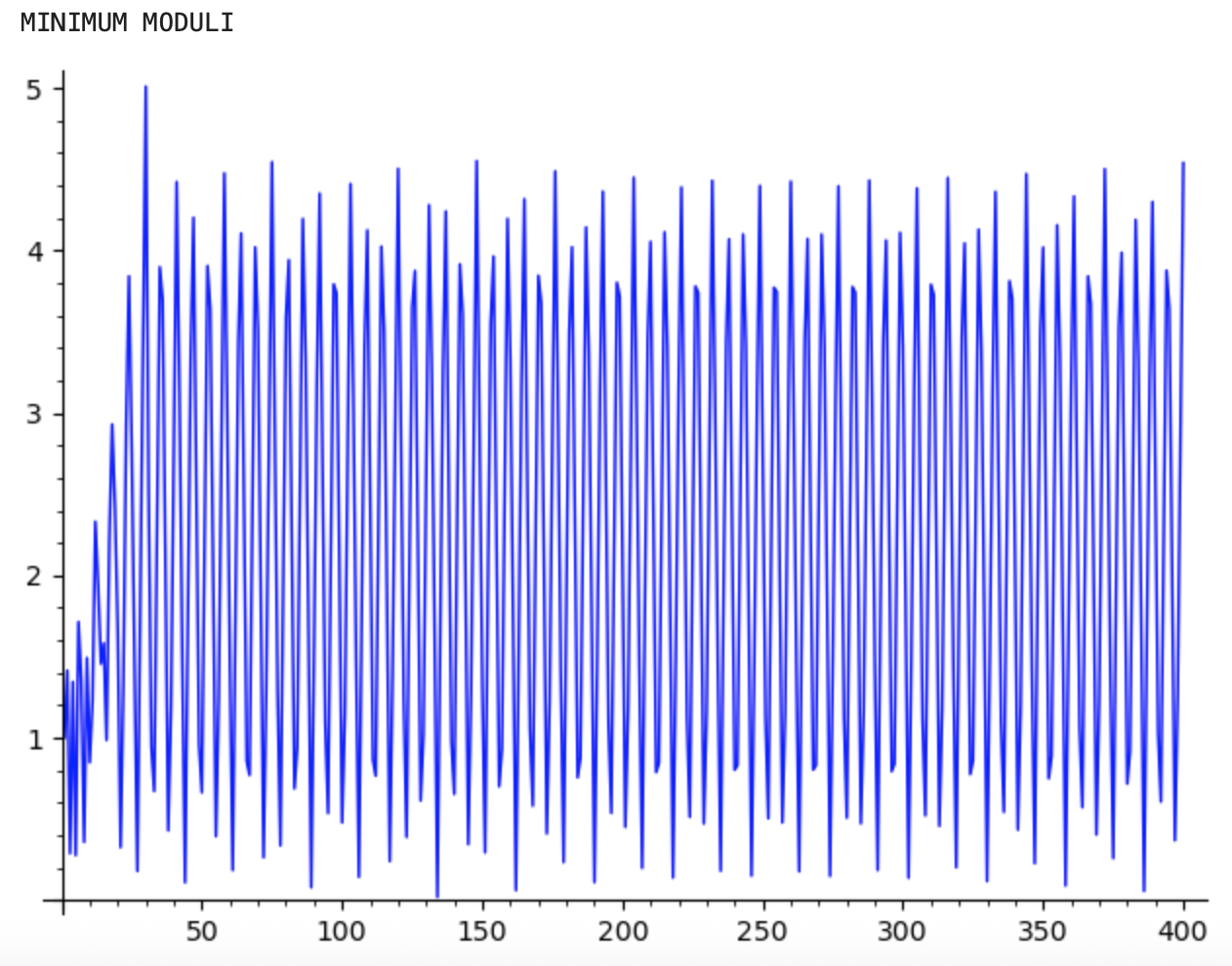}
        \caption{$h(n)=a_n$.}
        \label{fig:curve_14a1}
    \end{subfigure}
    \hfill
    \begin{subfigure}[t]{0.48\textwidth}
        \centering
        \includegraphics[width=\textwidth]{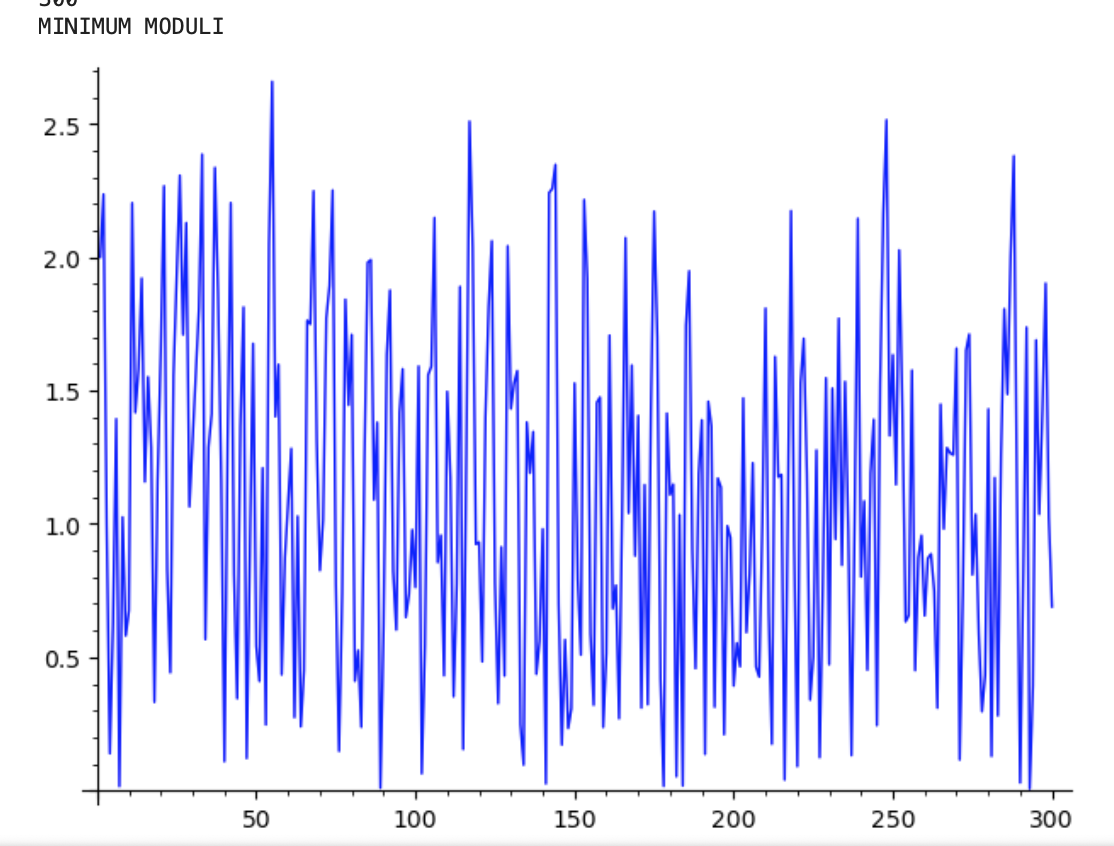}
        \caption{$h(n) = a_{p_n}$.}
        \label{fig:14a1_primes}
    \end{subfigure}
    \begin{subfigure}[t]{0.48\textwidth}
        \centering
        \includegraphics[width=\textwidth]{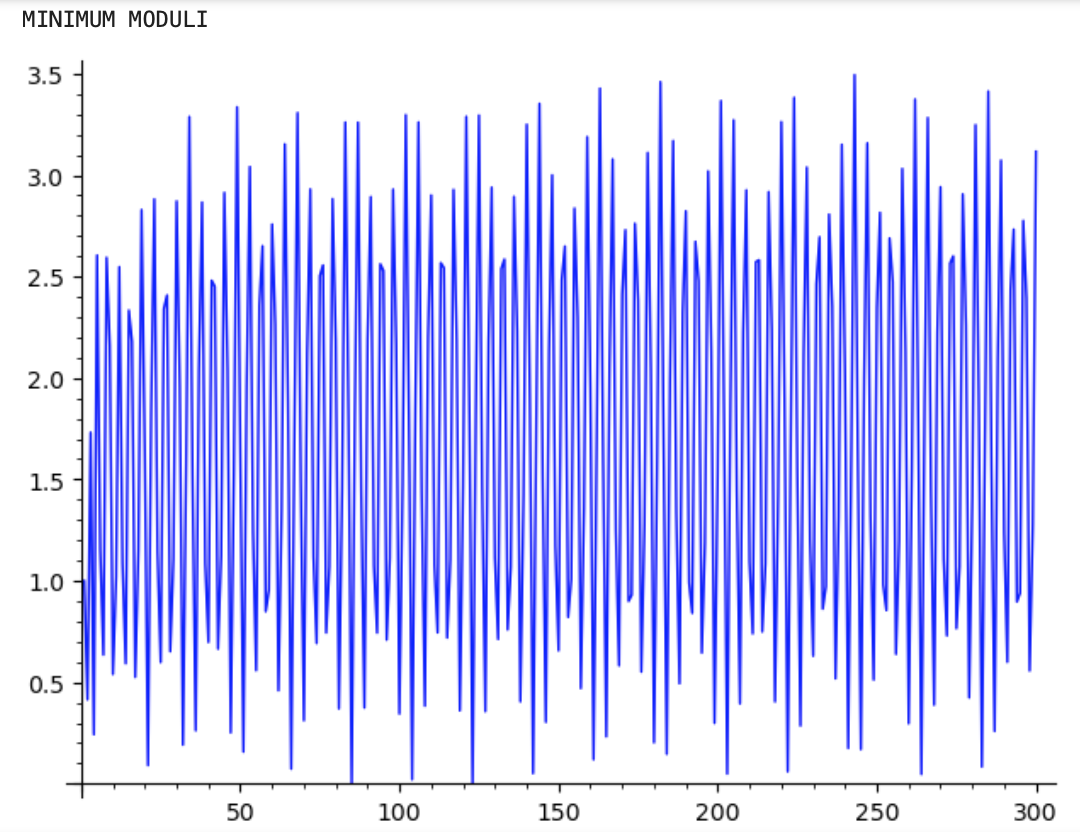}
        \caption{$h(n) = a_{1+p_n}$.}
        \label{fig:curve14a1_shift_minus_1}
    \end{subfigure}
    \caption{Curve 14a1 with $c = 1$.}
    \label{fig:crv_14a1_comparison}
\end{figure}
\begin{figure}[H]
    \centering
    \begin{subfigure}[t]{0.48\textwidth}
        \centering
       \includegraphics[width=\textwidth]{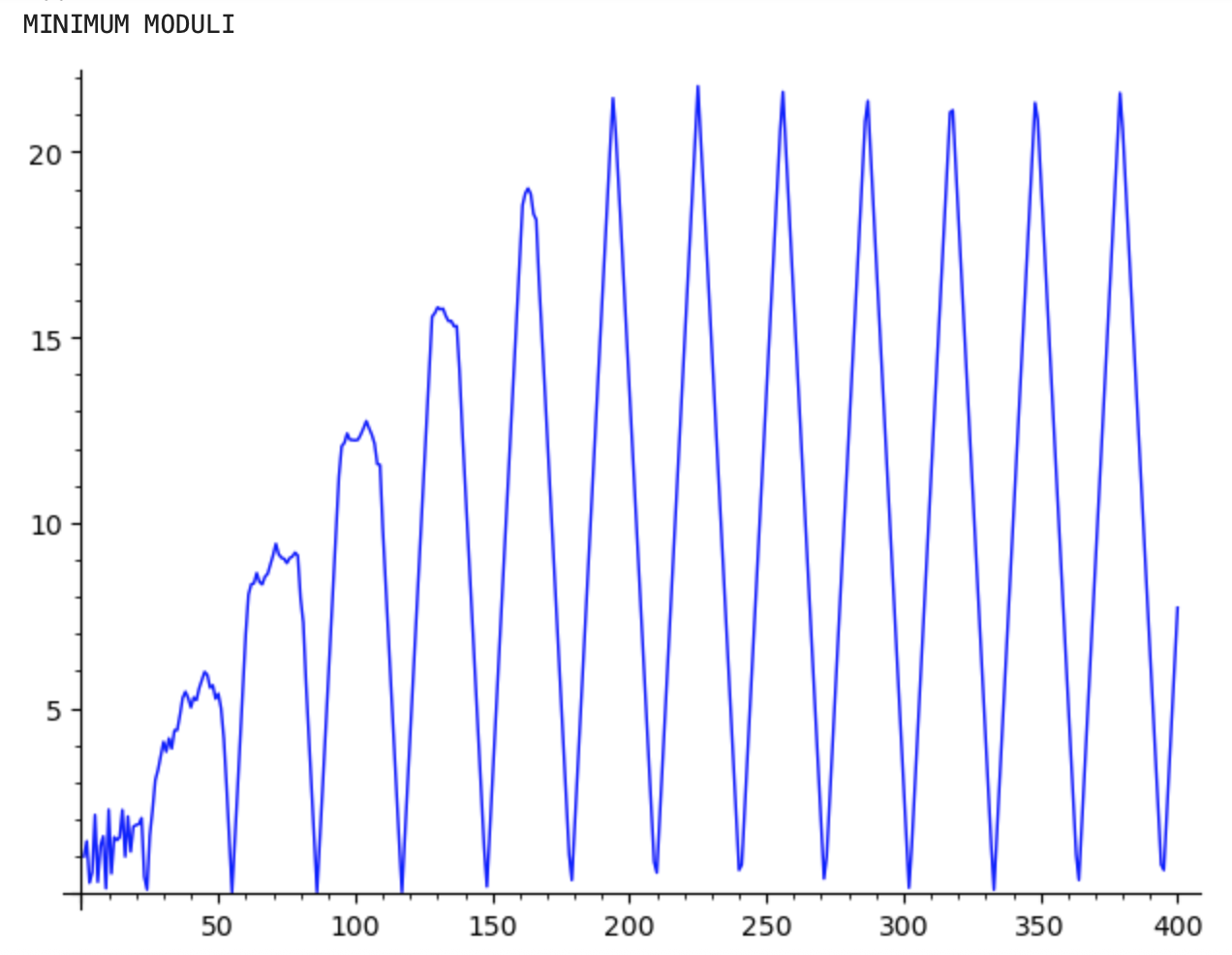}
        \caption{$h(n)=a(n)$.}
        \label{fig:curve_15a_1_all}
    \end{subfigure}
    \hfill
    \begin{subfigure}[t]{0.48\textwidth}
        \centering
        \includegraphics[width=\textwidth]{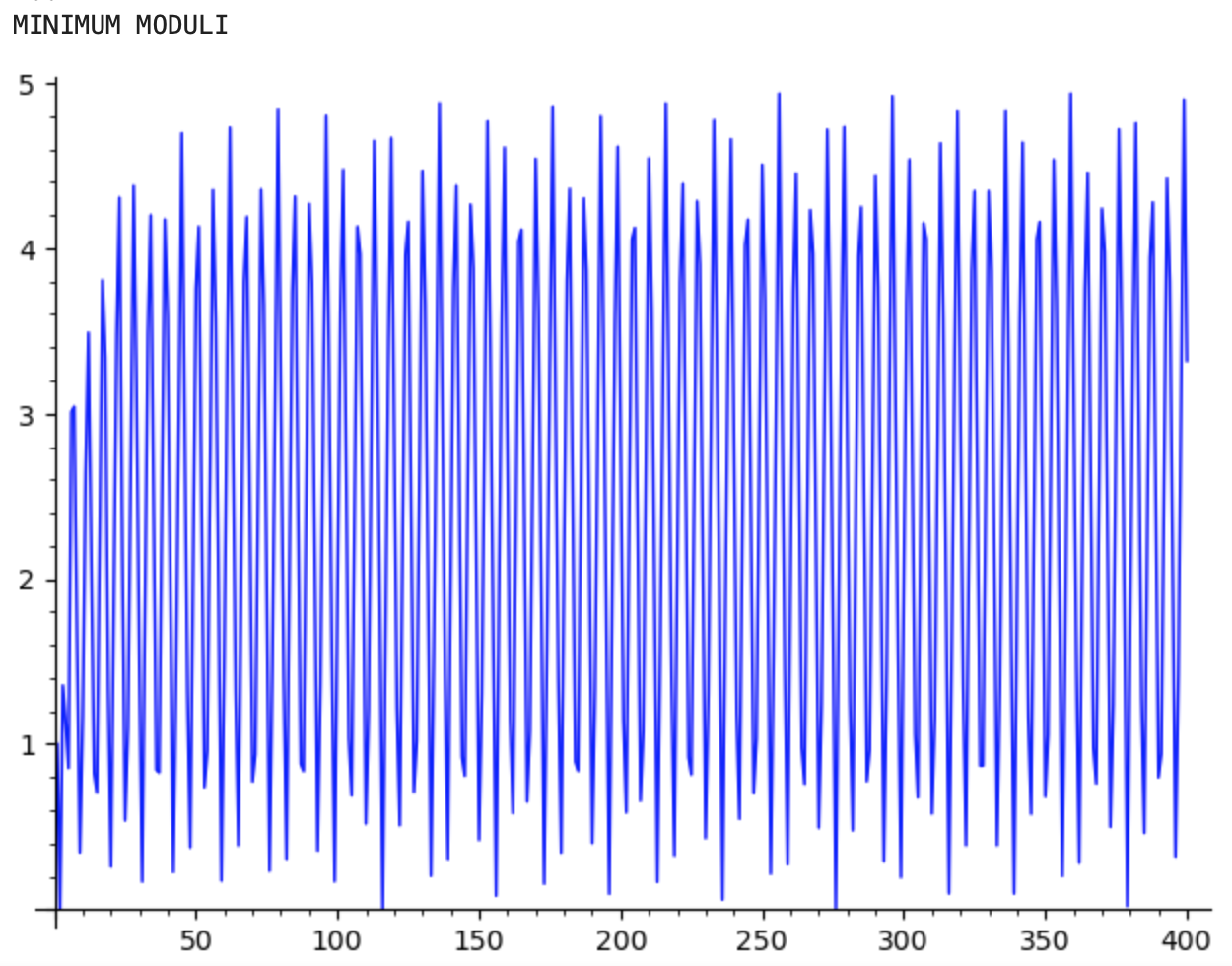}
        \caption{$h(n) = a_{1+p_n}$.}
        \label{fig:curve_15a_1_primes}
    \end{subfigure}
    \caption{Curve 15a1 with $c = 1$.}
    \label{fig:crv_15a1_comparison}
\end{figure}
\begin{figure}[H]
    \centering
    \begin{subfigure}[t]{0.48\textwidth}
        \centering
        \includegraphics[width=\textwidth]{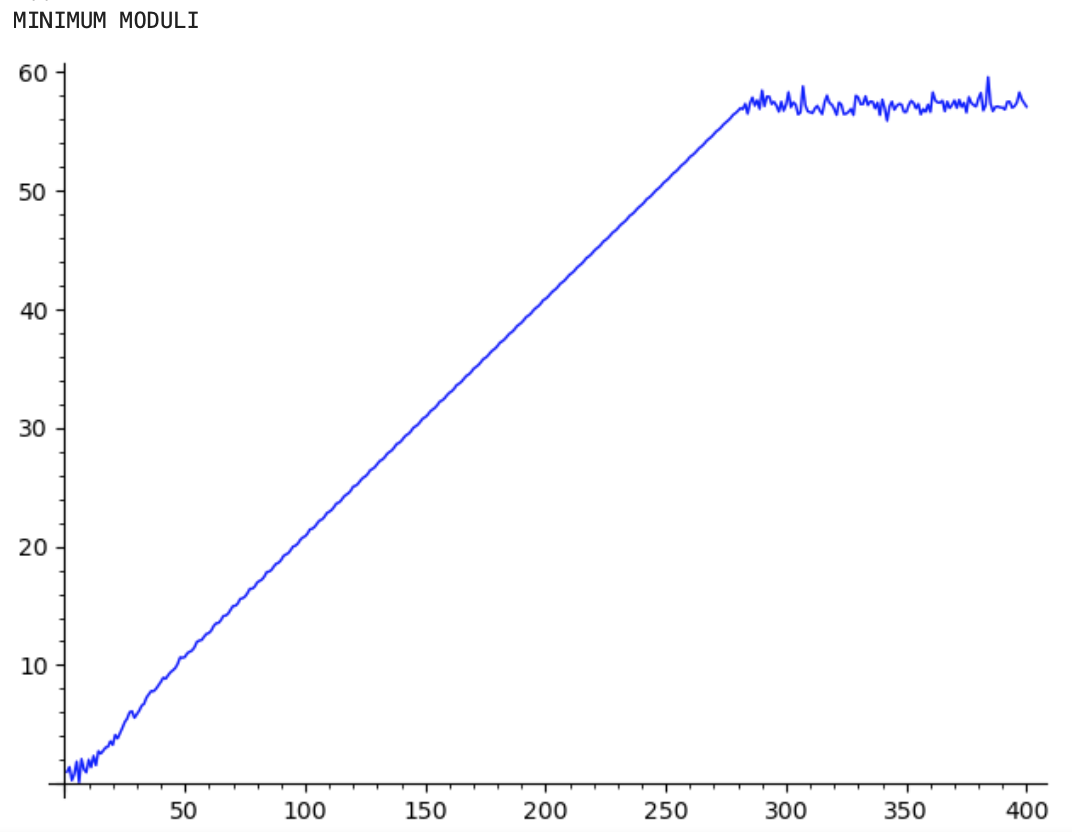}
        \caption{$h(n)=a(n)$.}
        \label{fig:crv17a1_all_n}
    \end{subfigure}
    \hfill
    \begin{subfigure}[t]{0.48\textwidth}
        \centering
        \includegraphics[width=\textwidth]{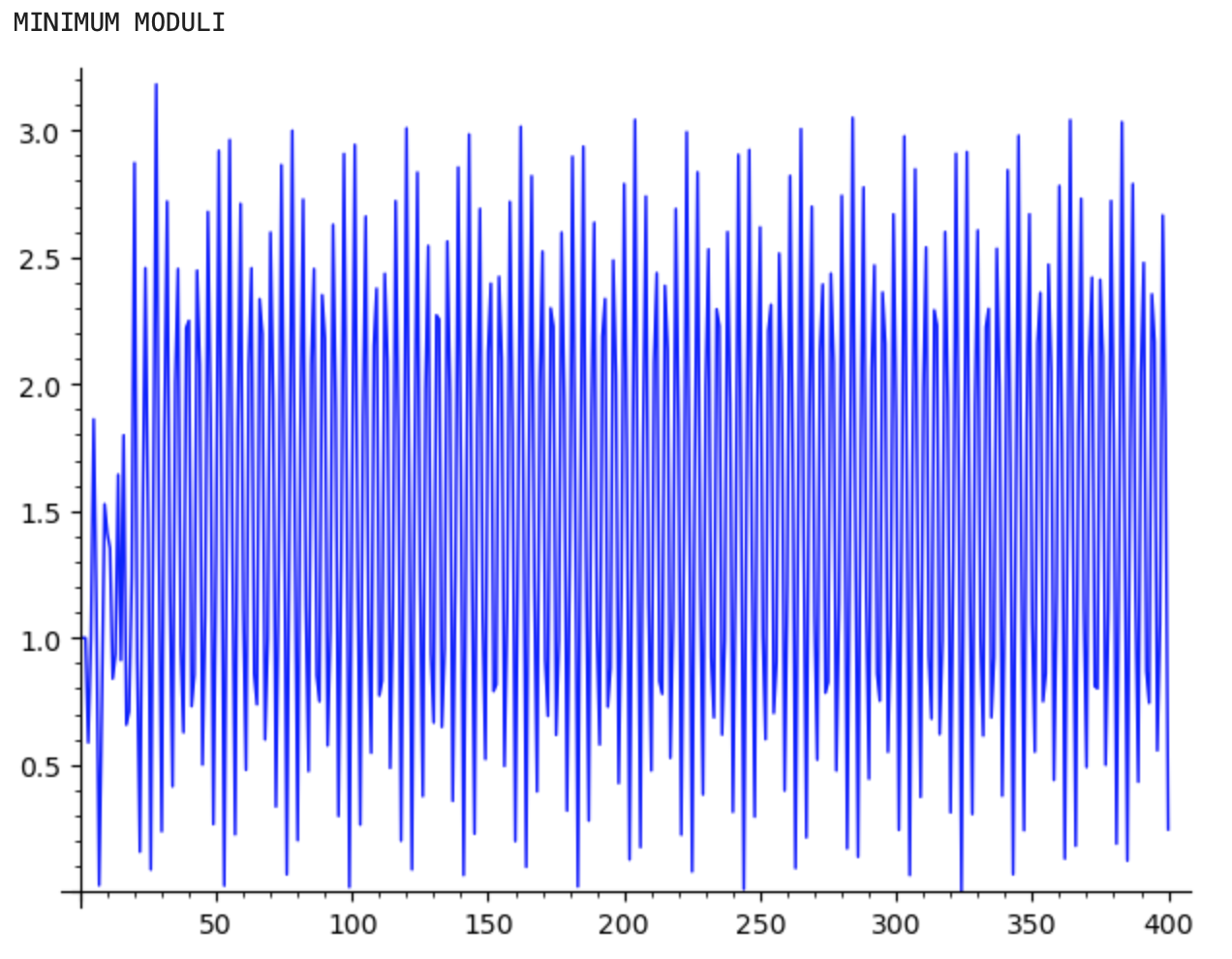}
        \caption{$h(n) = a_{1+p_n}$.}
        \label{fig:curve_17a1_on_primes}
    \end{subfigure}
    \caption{Curve 17a1 with $c = 1$.}
    \label{fig:crv_17a1_comparison}
\end{figure}
\begin{figure}[H]
    \centering
    \begin{subfigure}[t]{0.48\textwidth}
        \centering
        \includegraphics[width=\textwidth]{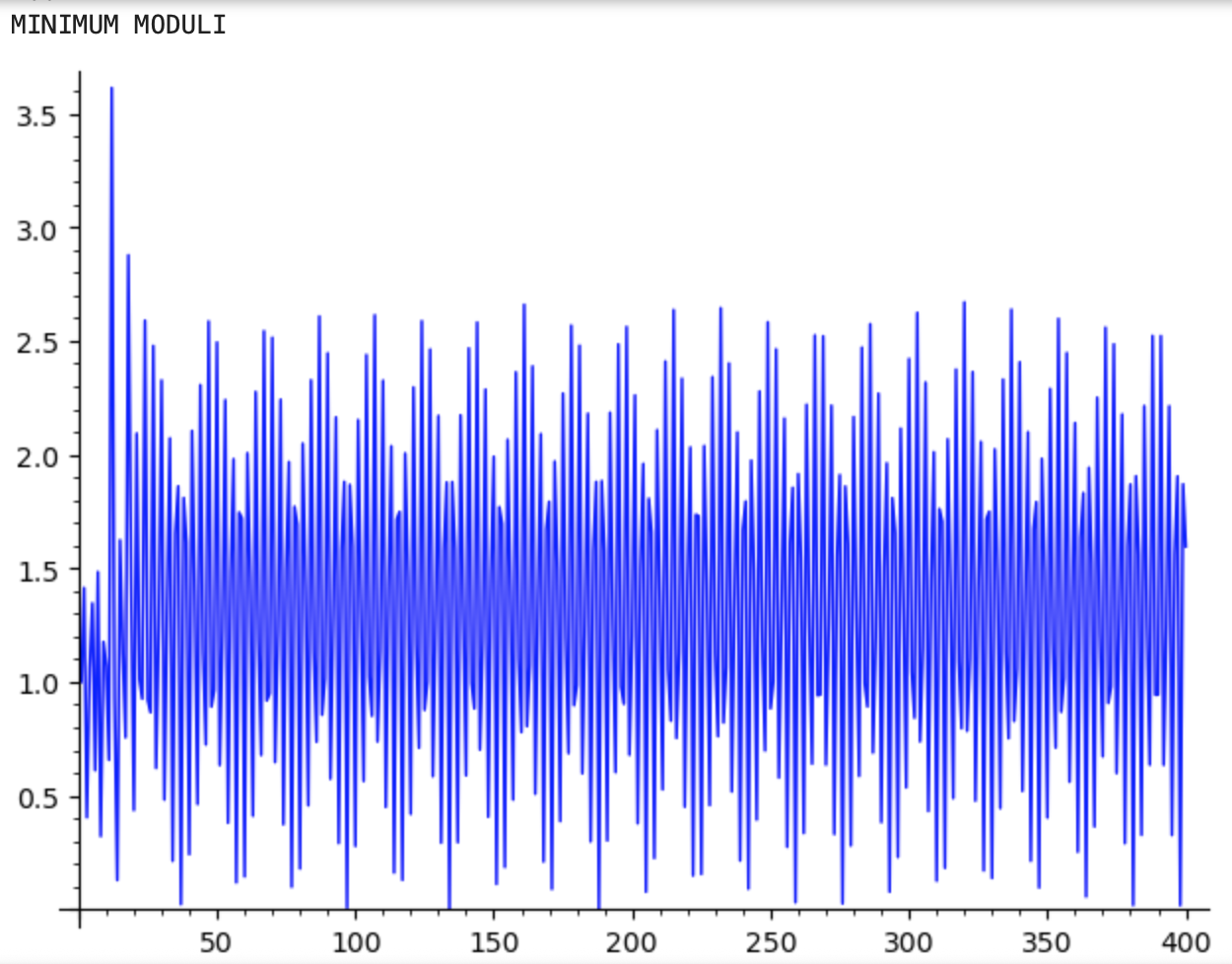}
        \caption{$h(n)=a(n)$.}
        \label{fig:curve_19a1}
    \end{subfigure}
    \hfill
    \begin{subfigure}[t]{0.48\textwidth}
        \centering
        \includegraphics[width=\textwidth]{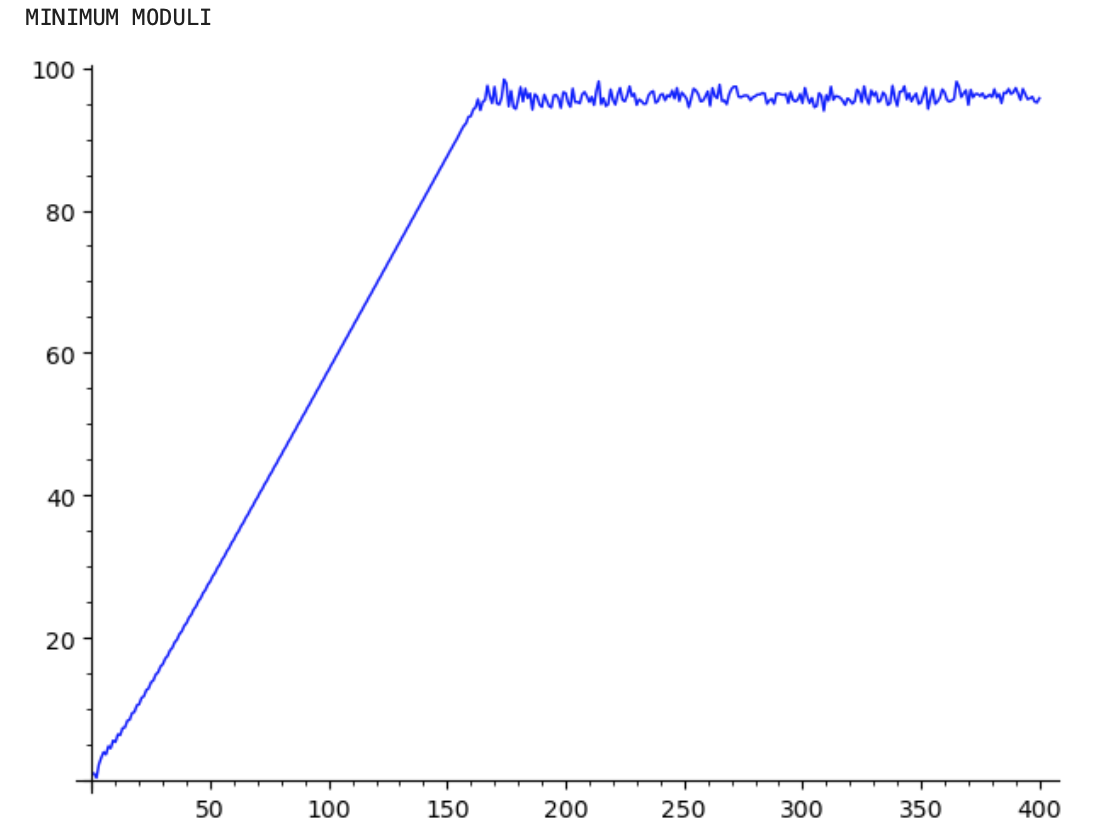}
        \caption{$h(n) = a_{1+p_n}$.}
        \label{fig:crv19a1_primes}
    \end{subfigure}
    \caption{Curve 19a1 with $c = 1$.}
    \label{fig:crv_19a1_comparison}
\end{figure}
\begin{figure}[H]
    \centering
    \begin{subfigure}[t]{0.48\textwidth}
        \centering
        \includegraphics[width=\textwidth]{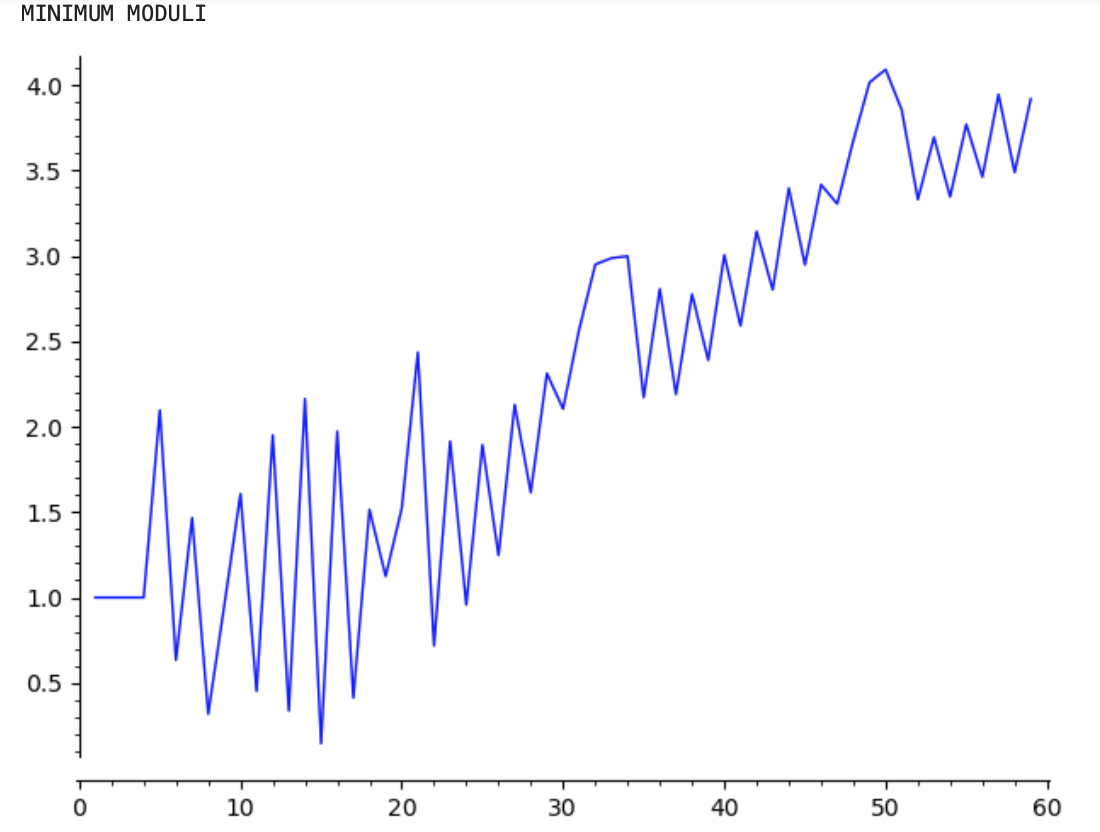}
        \caption{$h(n)=a(n)$.}
        \label{fig:crv_20a1_all}
    \end{subfigure}
    \hfill
    \begin{subfigure}[t]{0.48\textwidth}
        \centering
        \includegraphics[width=\textwidth]{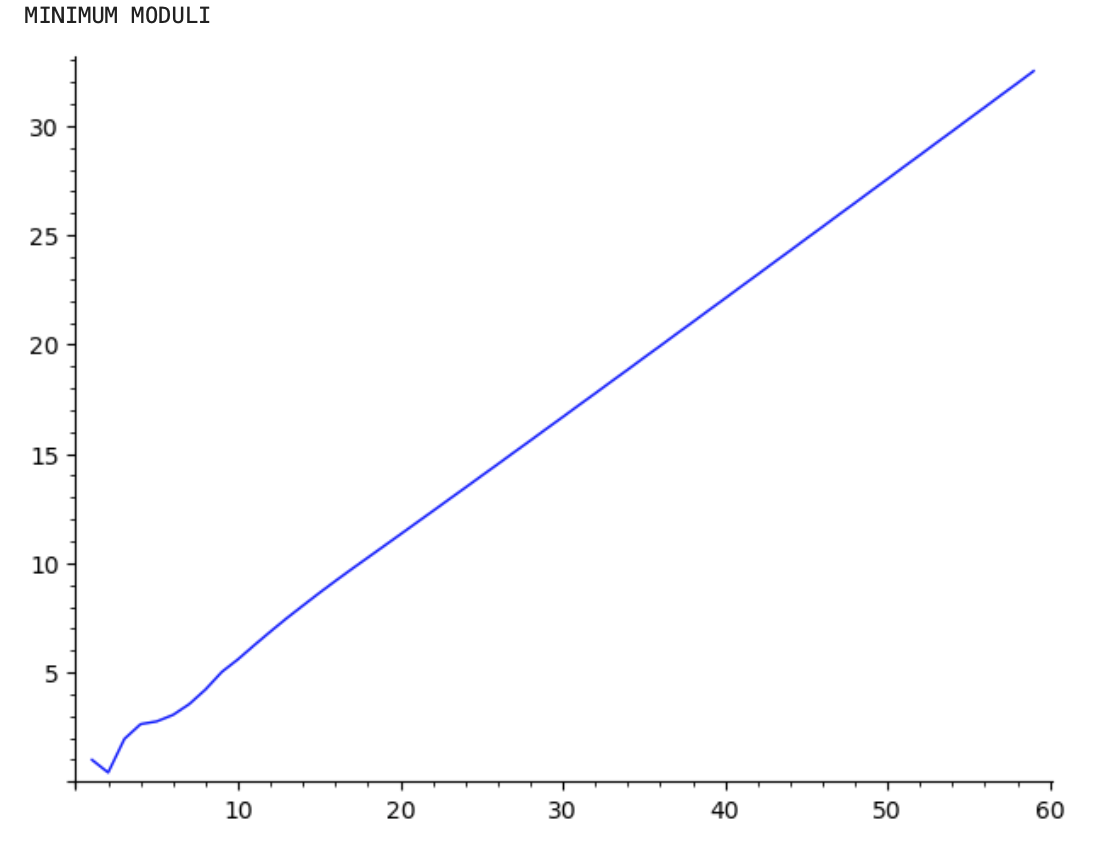}
        \caption{$h(n) = a_{1+p_n}$.}
        \label{fig:crv_20a1_primes}
    \end{subfigure}
    \caption{Curve 20a1 with $c = 1$.}
    \label{fig:crv_20a1_comparison}
\end{figure}
\begin{figure}[H]
    \centering
    \begin{subfigure}[t]{0.48\textwidth}
        \centering
        \includegraphics[width=\textwidth]{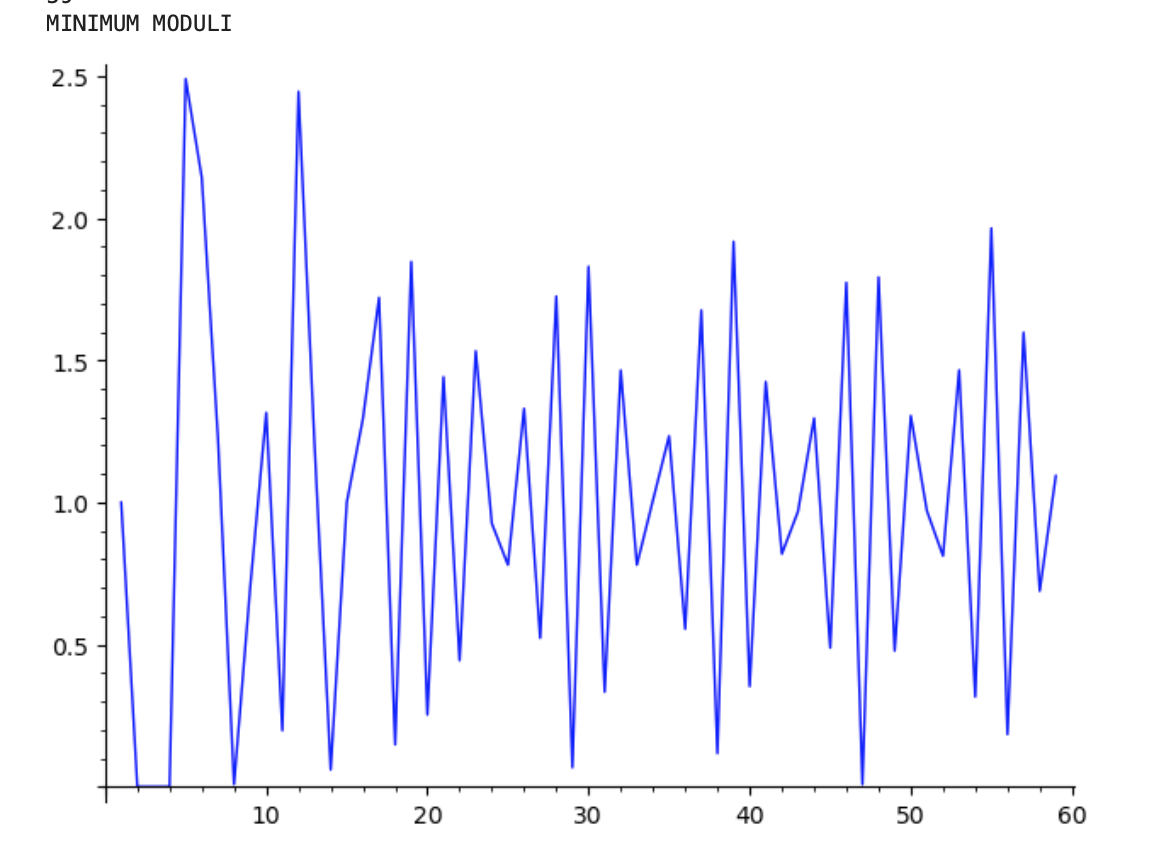}
        \caption{$h(n)=a(n)$.}
        \label{fig:crv21a1_all}
    \end{subfigure}
    \hfill
    \begin{subfigure}[t]{0.48\textwidth}
        \centering
        \includegraphics[width=\textwidth]{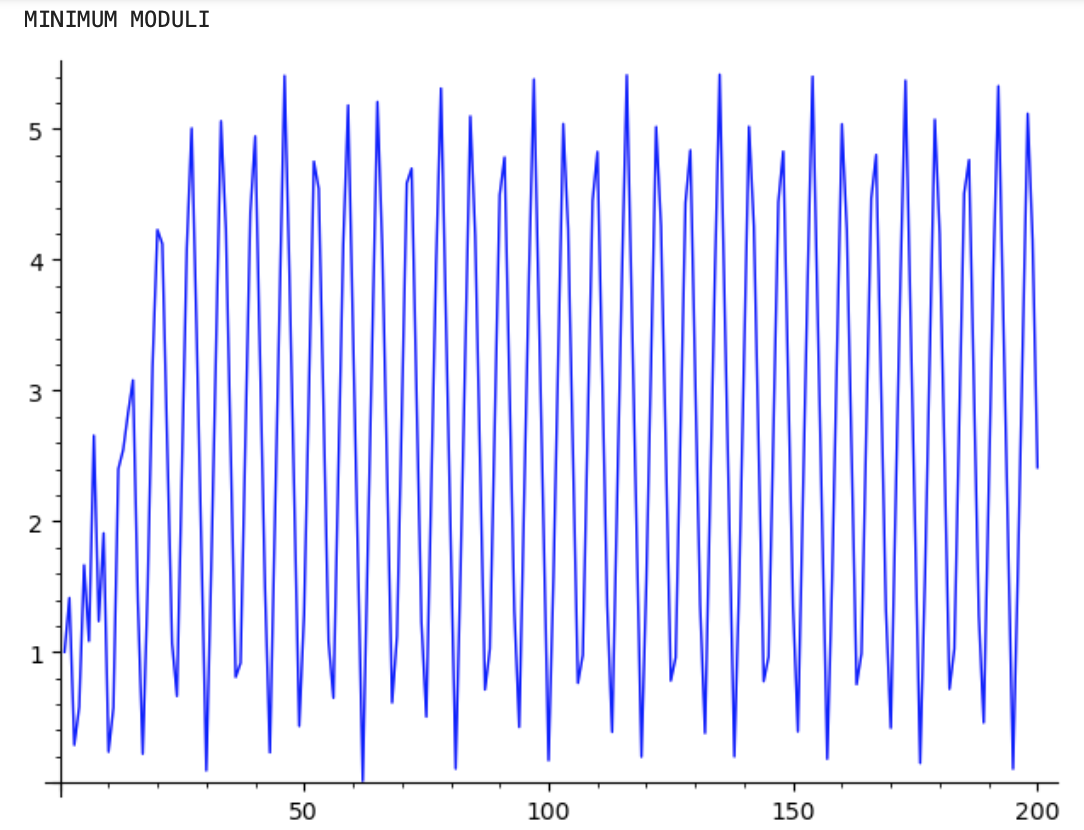}
        \caption{$h(n) = a_{1+p_n}$.}
        \label{fig:crv21a1_primes}
    \end{subfigure}
    \caption{Curve 21a1 with $c = 1$.}
    \label{fig:crv_21a1_primes}
\end{figure}
\begin{figure}[H]
    \centering
    \begin{subfigure}[t]{0.48\textwidth}
        \centering
        \includegraphics[width=\textwidth]{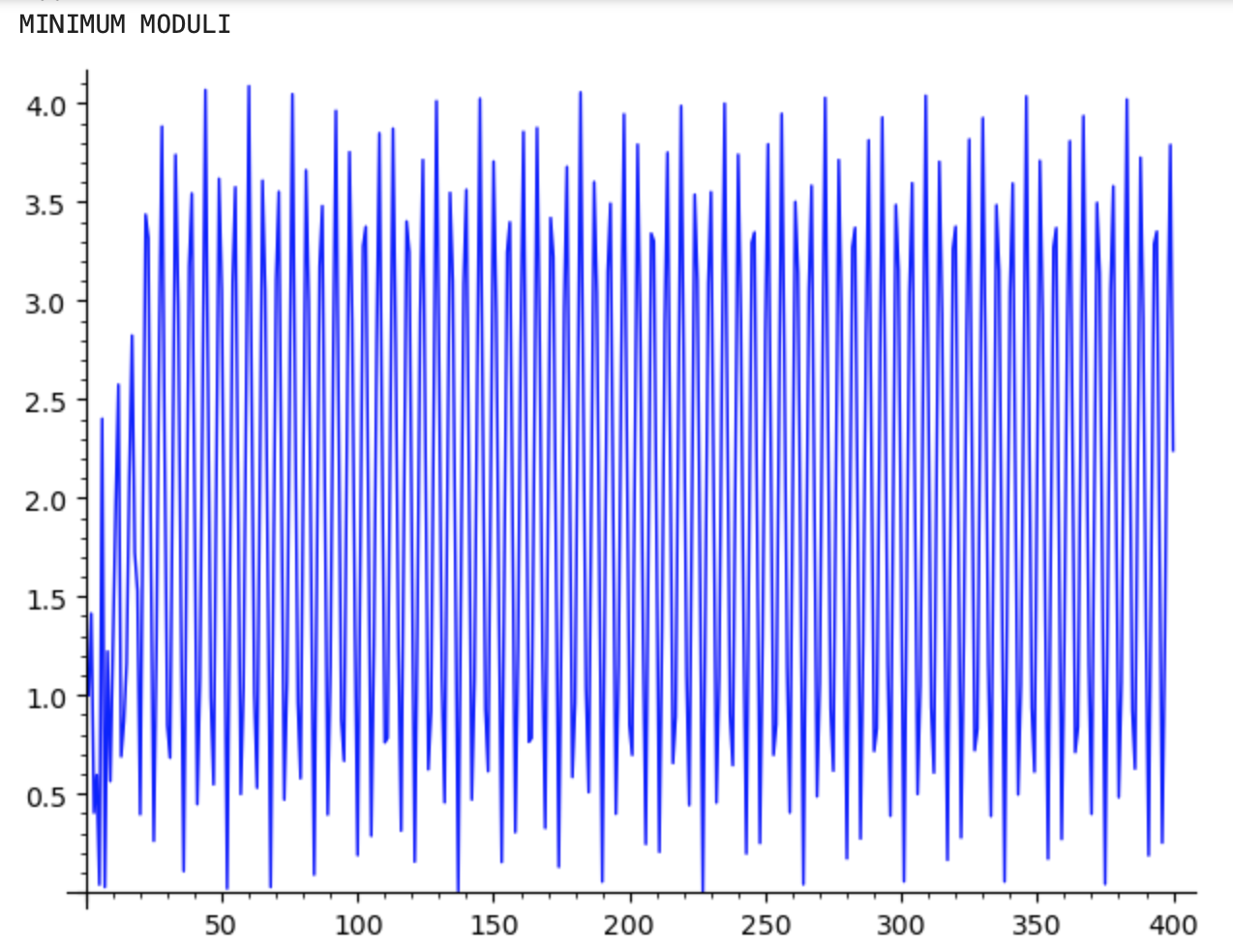}
        \caption{$h(n)=a(n)$.}
        \label{fig:crv24a1_all_n}
    \end{subfigure}
    \hfill
    \begin{subfigure}[t]{0.48\textwidth}
        \centering
        \includegraphics[width=\textwidth]{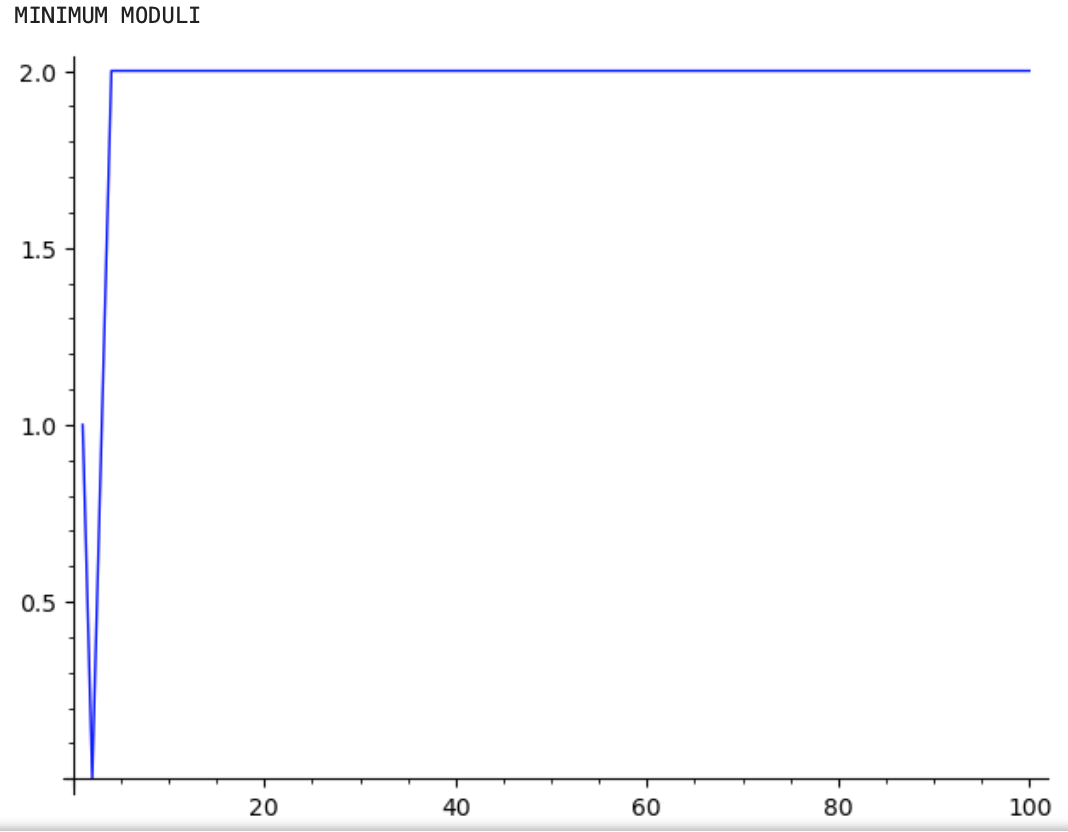}
        \caption{$h(n) = a_{1+p_n}$.}
        \label{fig:crv24a1_primes}
    \end{subfigure}
    \caption{Curve 24a1 with $c = 1$.}
    \label{fig:crv_24a1_comparison}
\end{figure}

\begin{figure}[H]
    \centering
    \begin{subfigure}[t]{0.48\textwidth}
        \centering
        \includegraphics[width=\textwidth]{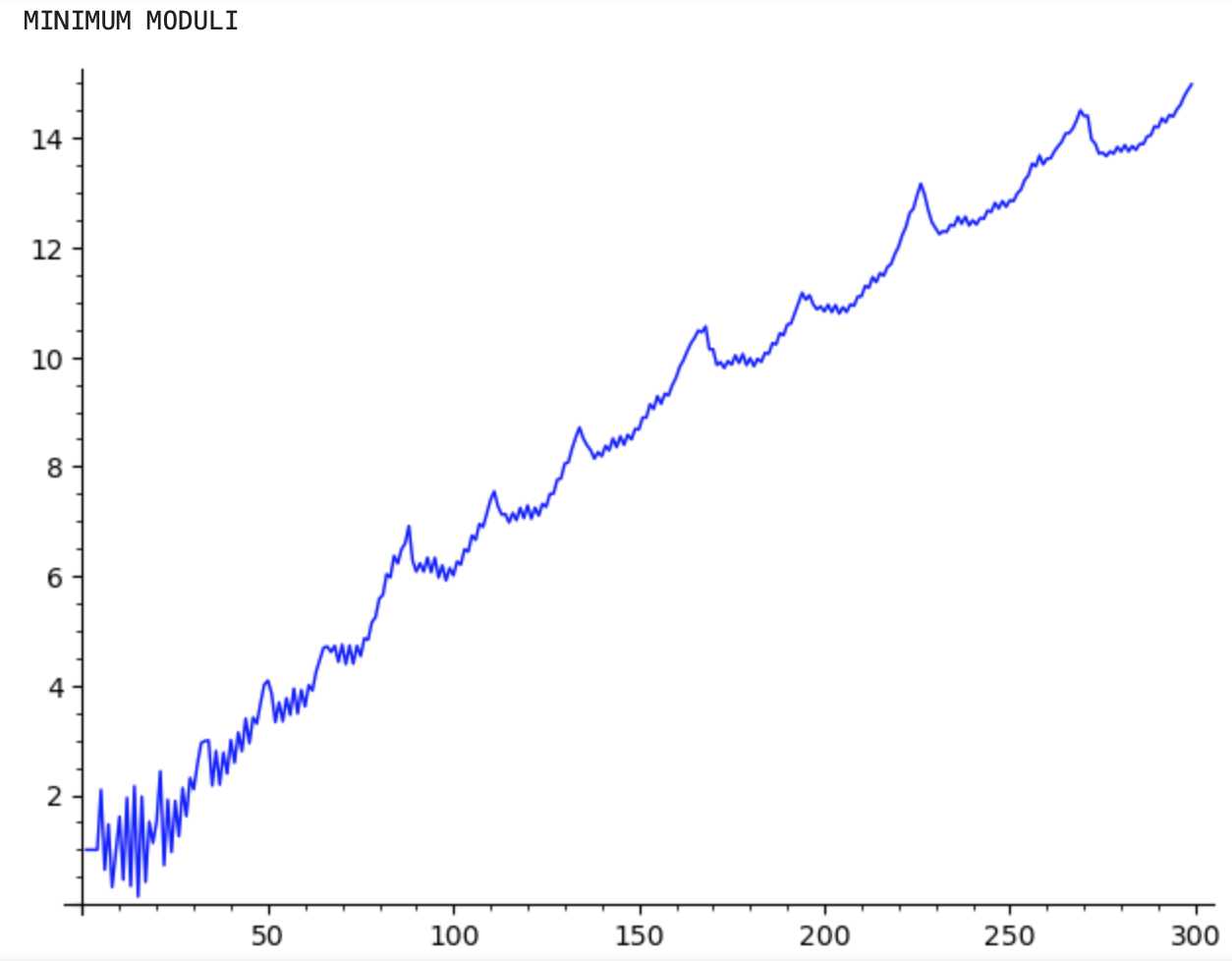}
        \caption{$h(n)=a(n)$.}
        \label{fig:crv26a1_all}
    \end{subfigure}
    \hfill
    \begin{subfigure}[t]{0.48\textwidth}
        \centering
        \includegraphics[width=\textwidth]{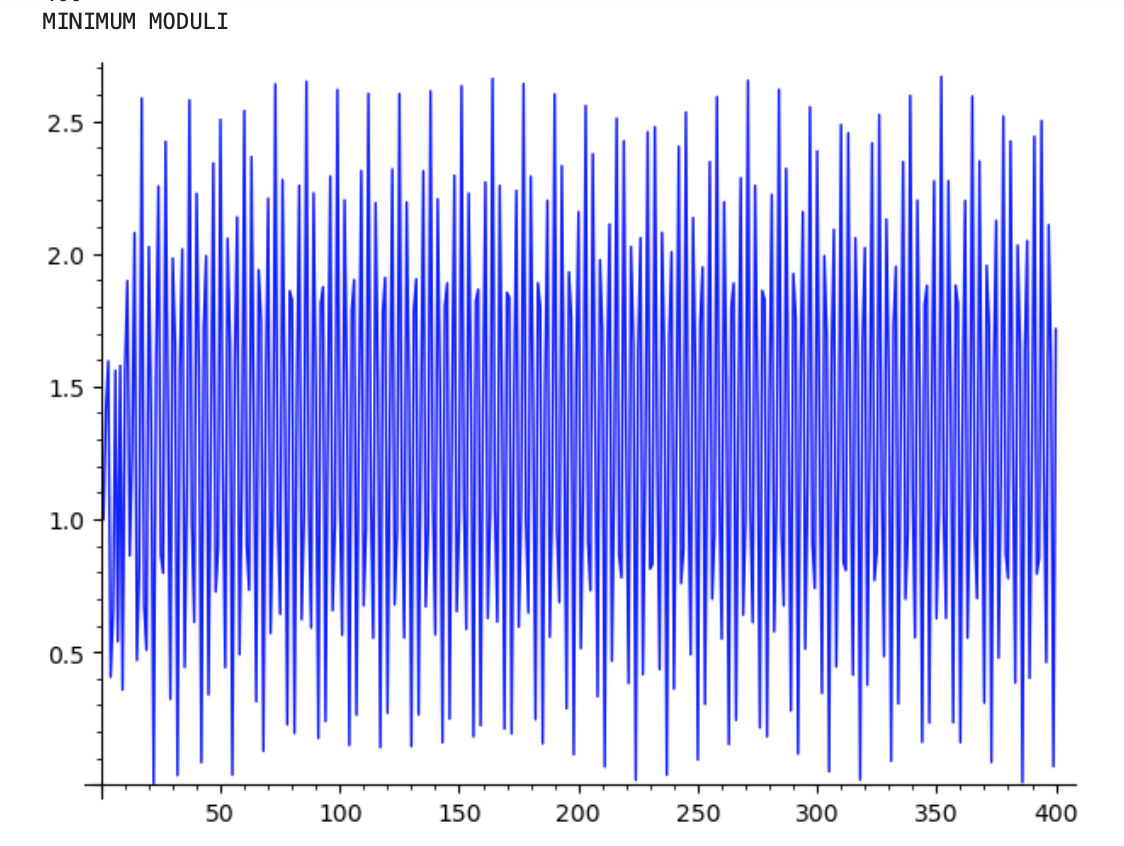}
        \caption{$h(n) = a_{1+p_n}$.}
        \label{fig:crv26a1_primes}
    \end{subfigure}
    \caption{Curve 26a1 with $c = 1$.}
    \label{fig:crv_26a1_comparison}
\end{figure}

\begin{figure}[H]
    \centering
    \begin{subfigure}[t]{0.48\textwidth}
        \centering
        \includegraphics[width=\textwidth]{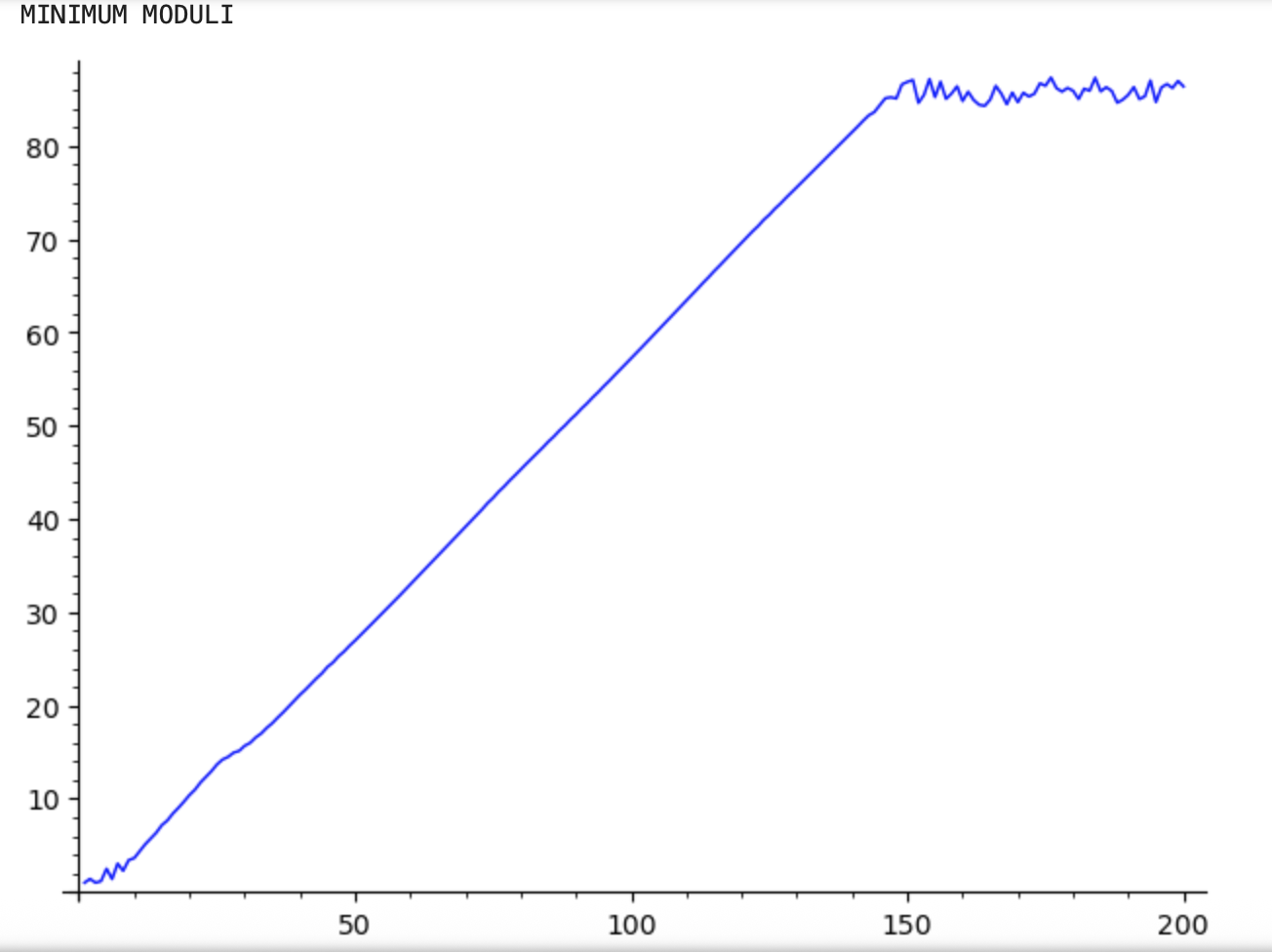}
        \caption{$h(n)=a(n)$.}
        \label{fig:crv26b1_all}
    \end{subfigure}
    \hfill
    \begin{subfigure}[t]{0.48\textwidth}
        \centering
        \includegraphics[width=\textwidth]{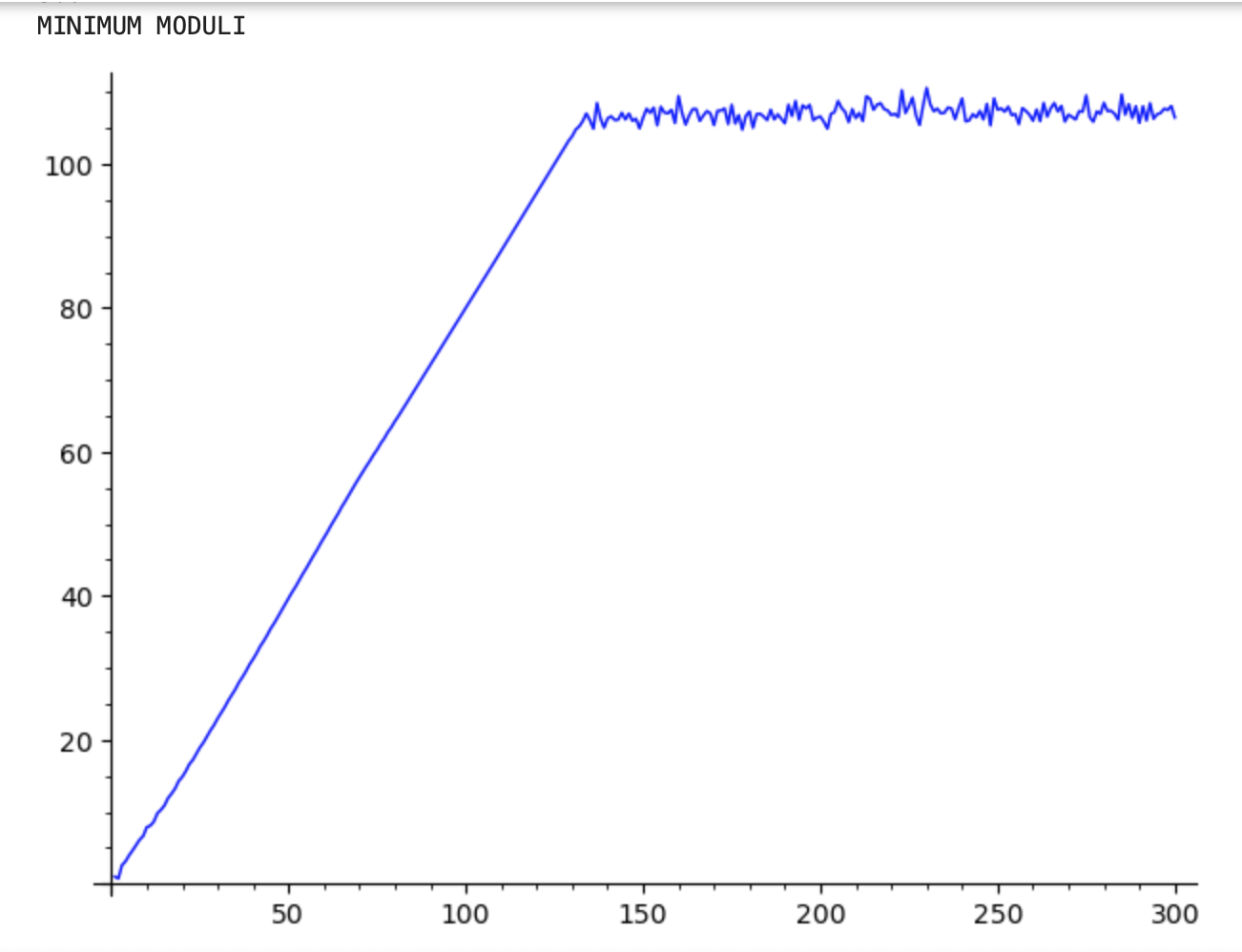}
        \caption{$h(n) = a_{1+p_n}$.}
        \label{fig:crv26b1_primes}
    \end{subfigure}
    \caption{Curve 26b1 with $c = 1$.}
    \label{fig:crv_26b1_comparison}
\end{figure}

\begin{figure}[H]
    \centering
    \begin{subfigure}[t]{0.48\textwidth}
        \centering
        \includegraphics[width=\textwidth]{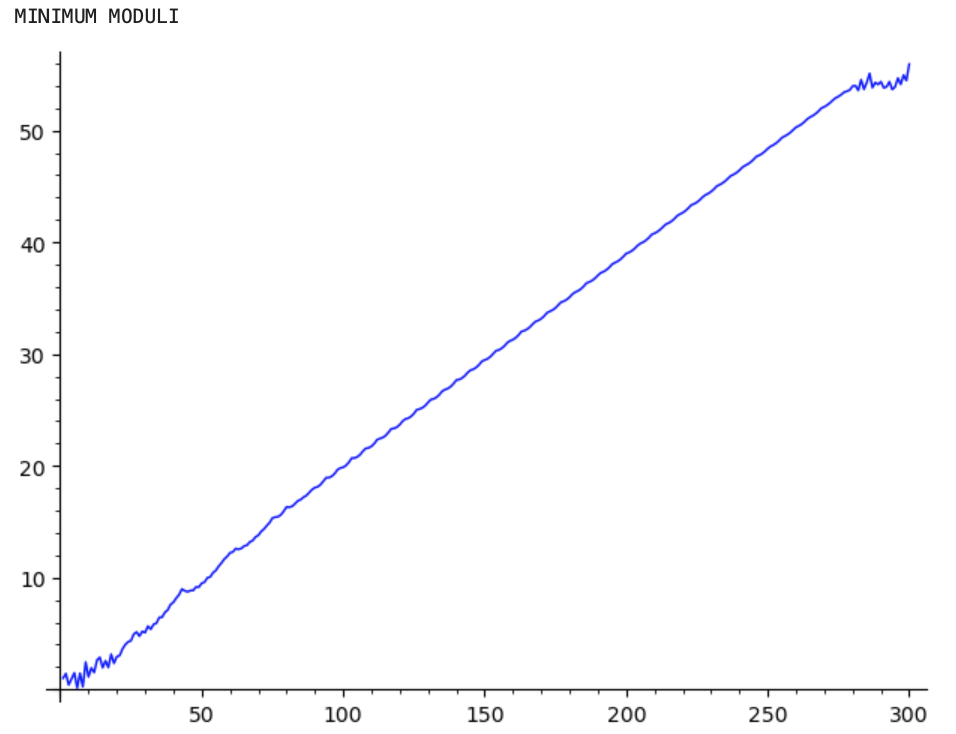}
        \caption{$h(n)=a(n)$.}
        \label{fig:crv27a1_all}
    \end{subfigure}
    \hfill
    \begin{subfigure}[t]{0.48\textwidth}
        \centering
        \includegraphics[width=\textwidth]{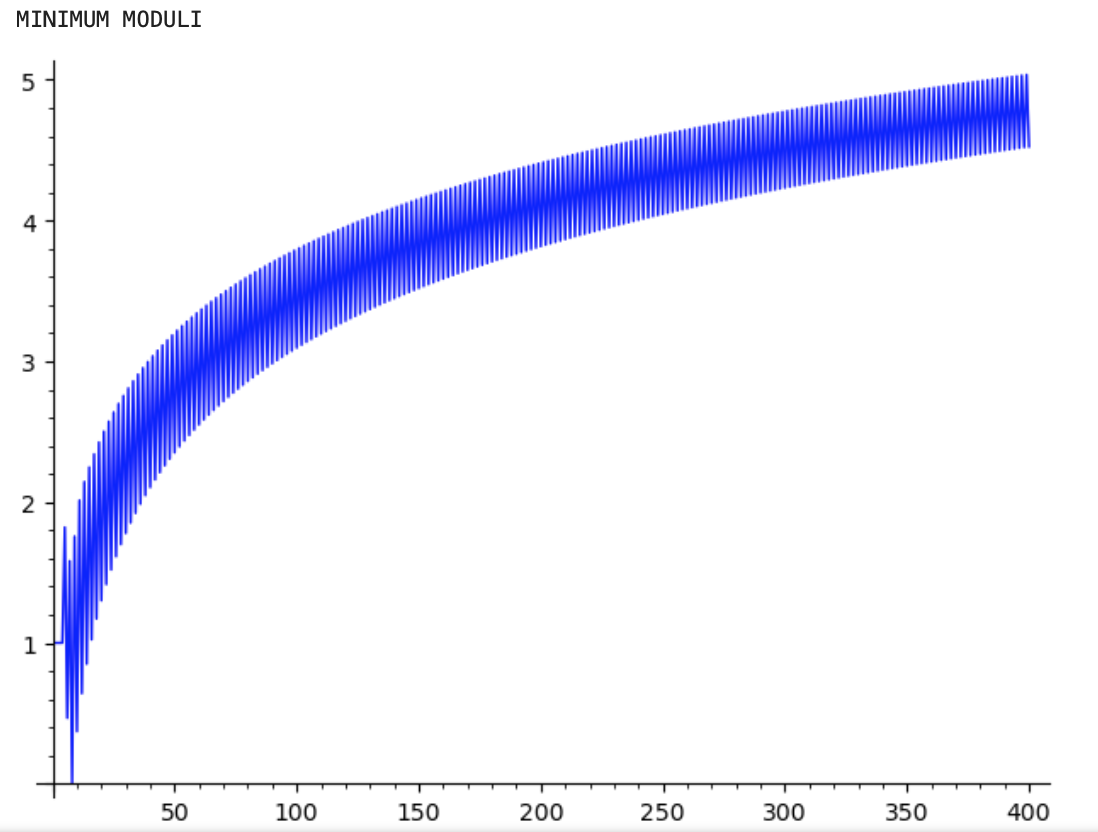}
        \caption{$h(n) = a_{1+p_n}$.}
        \label{fig:crv27a1_primes}
    \end{subfigure}
    \caption{Curve 27a1 with $c = 1$.}
    \label{fig:crv27a1_comparison}
\end{figure}

\begin{figure}[H]
    \centering
    \begin{subfigure}[t]{0.48\textwidth}
        \centering
        \includegraphics[width=\textwidth]{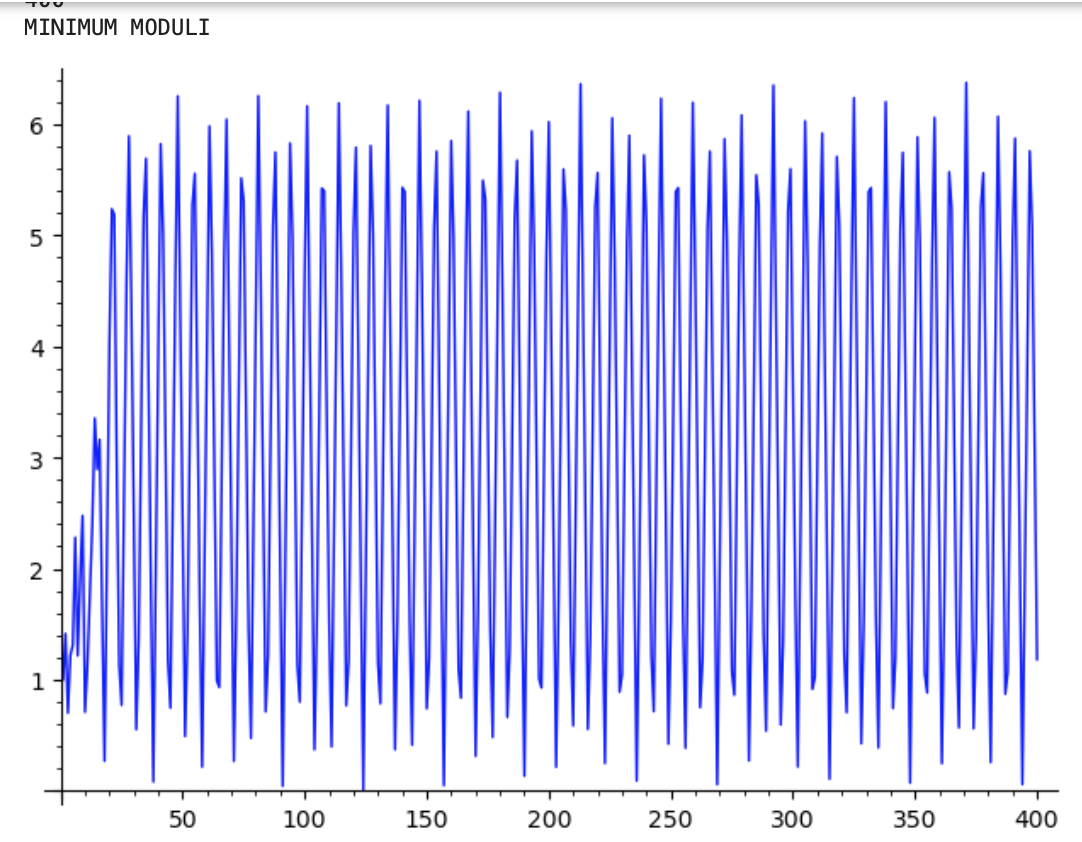}
        \caption{$h(n)=a(n)$.}
        \label{fig:curve_37a1}
    \end{subfigure}
    \hfill
    \begin{subfigure}[t]{0.48\textwidth}
        \centering
        \includegraphics[width=\textwidth]{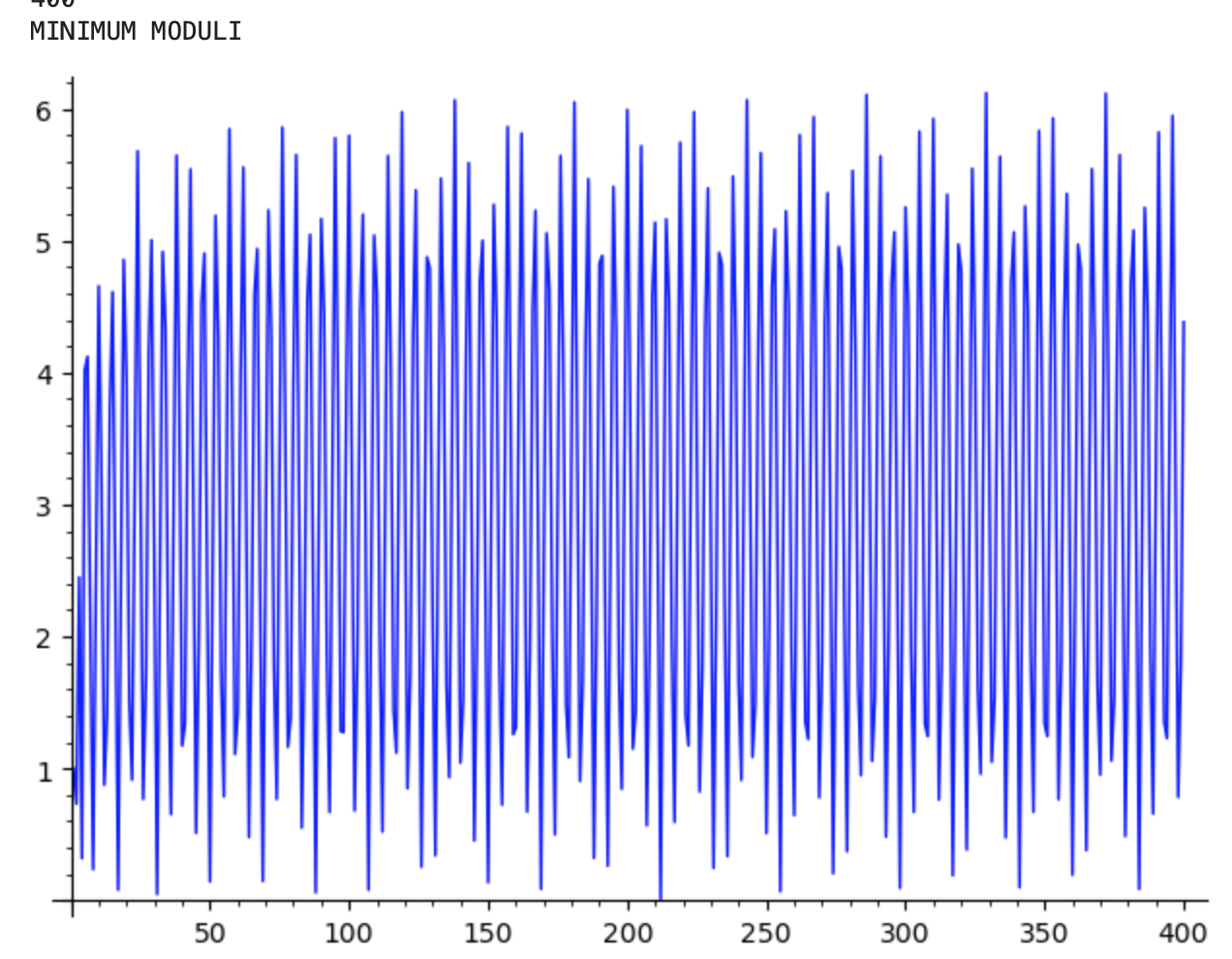}
        \caption{$h(n) = a_{1+p_n}$.}
        \label{fig:crv_37a1_primes}
    \end{subfigure}
    \caption{Curve 37a1 with $c = 1$.}
    \label{fig:crv_37a1_comparison}
\end{figure}
\begin{figure}[H]
    \centering
    \begin{subfigure}[t]{0.48\textwidth}
        \centering
        \includegraphics[width=\textwidth]{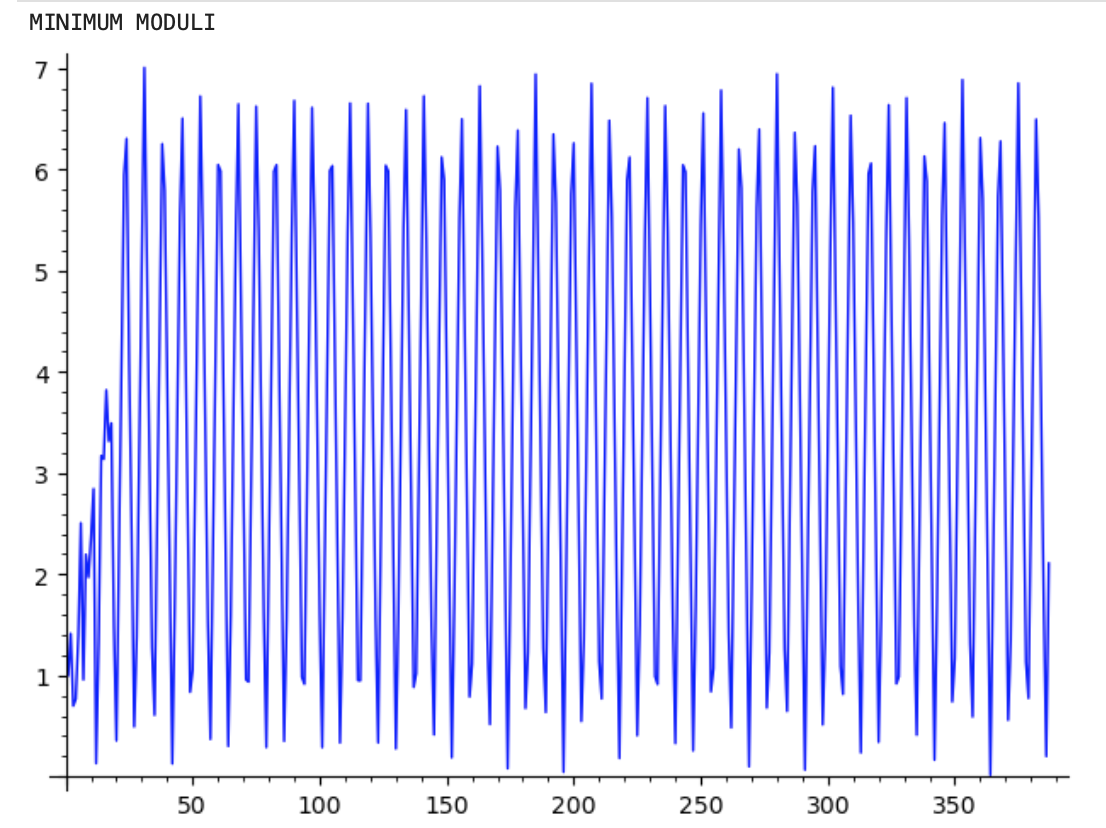}
        \caption{$h(n)=a(n)$.}
        \label{fig:curve_43a1}
    \end{subfigure}
    \hfill
    \begin{subfigure}[t]{0.48\textwidth}
        \centering
        \includegraphics[width=\textwidth]{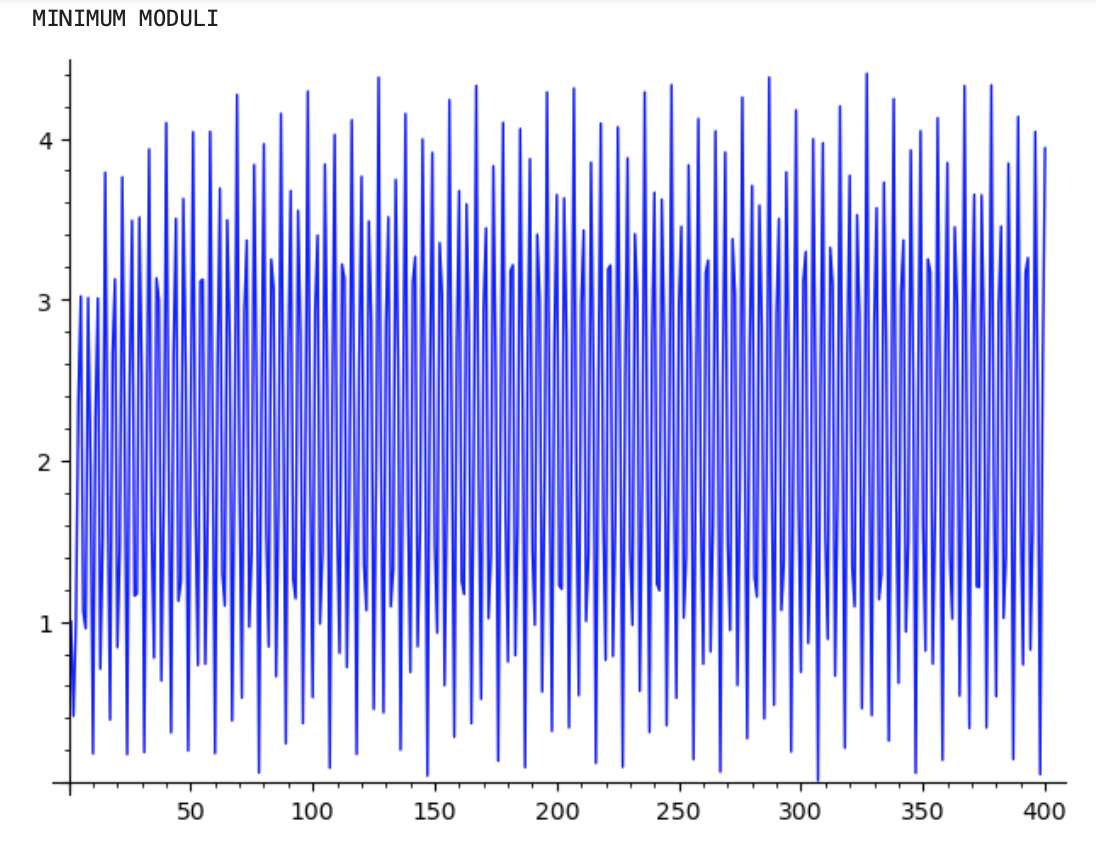}
        \caption{$h(n) = a_{1+p_n}$.}
        \label{fig:crv43a1_primes}
    \end{subfigure}
    \caption{Curve 43a1 with $c = 1$.}
    \label{fig:crv43a1_comparison}
\end{figure}
\begin{figure}[H]
    \centering
    \begin{subfigure}[t]{0.48\textwidth}
        \centering
        \includegraphics[width=\textwidth]{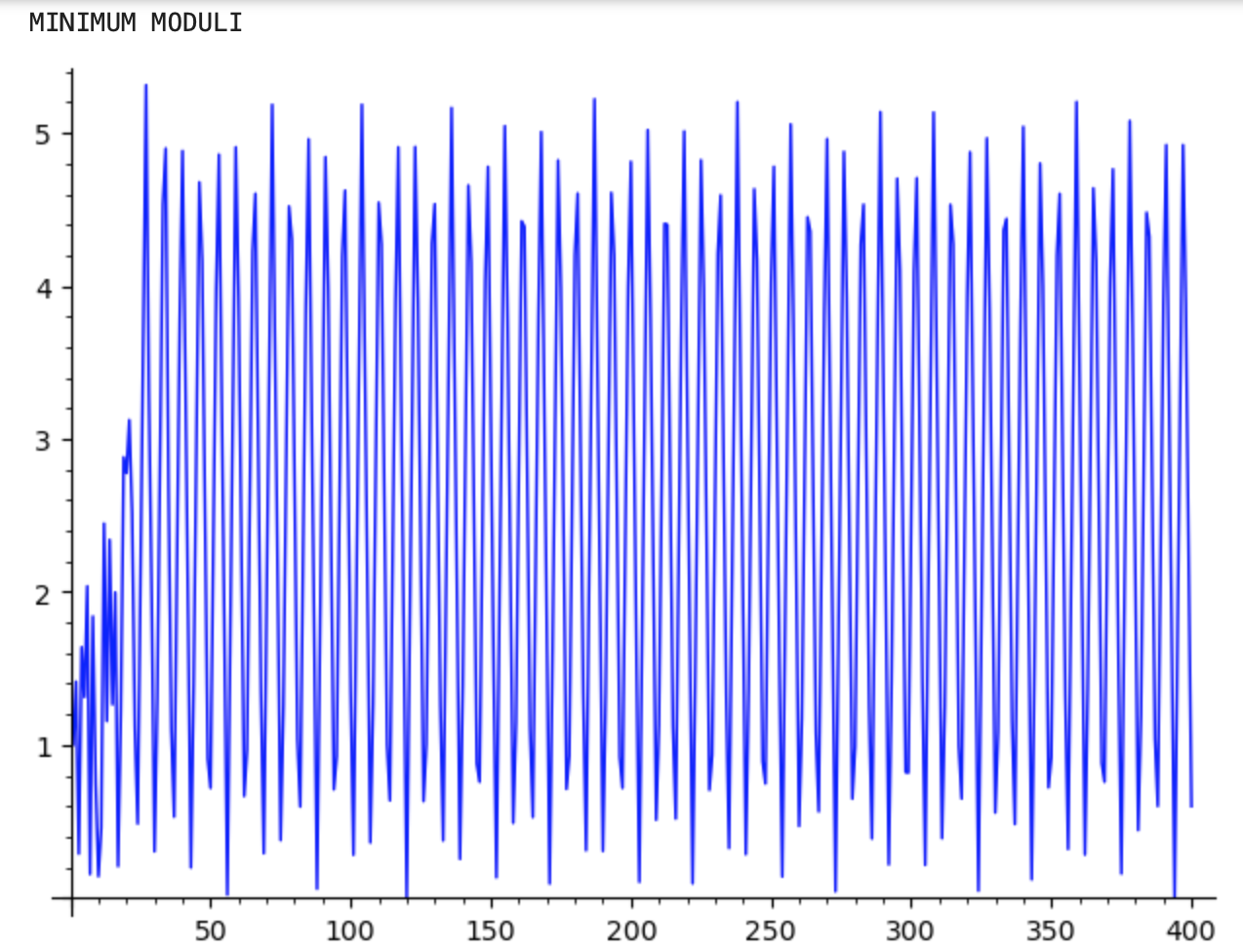}
        \caption{$h(n)=a(n)$.}
        \label{fig:curve_53a1}
    \end{subfigure}
    \hfill
    \begin{subfigure}[t]{0.48\textwidth}
        \centering
        \includegraphics[width=\textwidth]{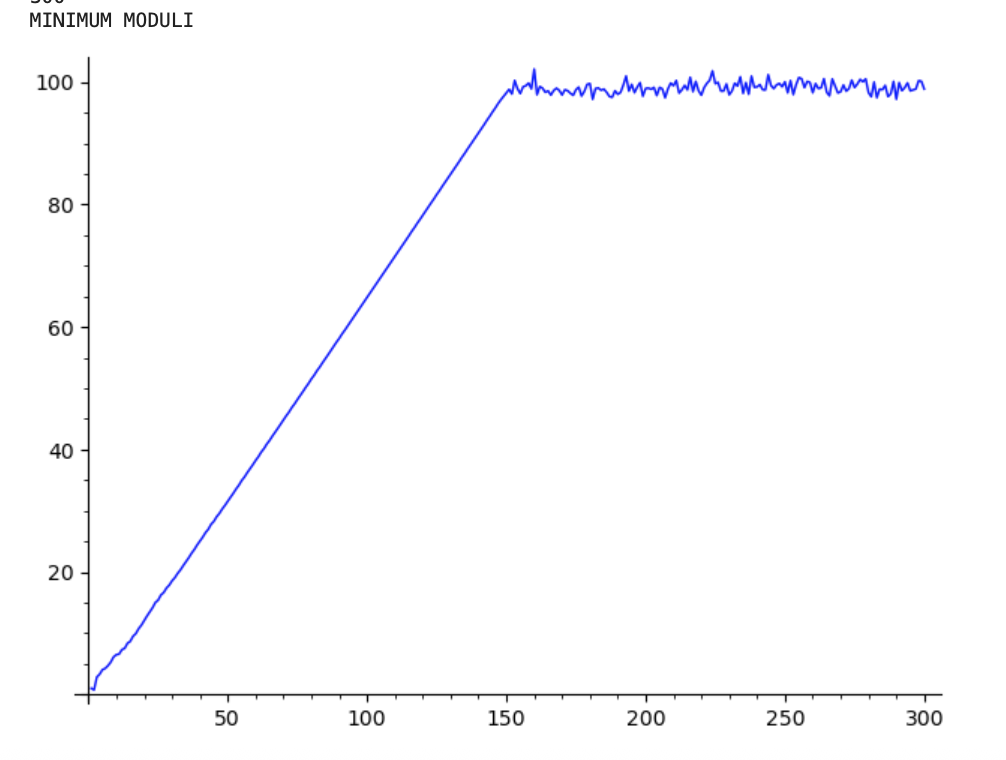}
        \caption{$h(n) = a_{1+p_n}$.}
        \label{fig:crv53a1_primes}
    \end{subfigure}
    \caption{Curve 53a1 with $c = 1$.}
    \label{fig:crv53a1_comparison}
\end{figure}
\begin{figure}[H]
    \centering
    \includegraphics[width=1\textwidth]{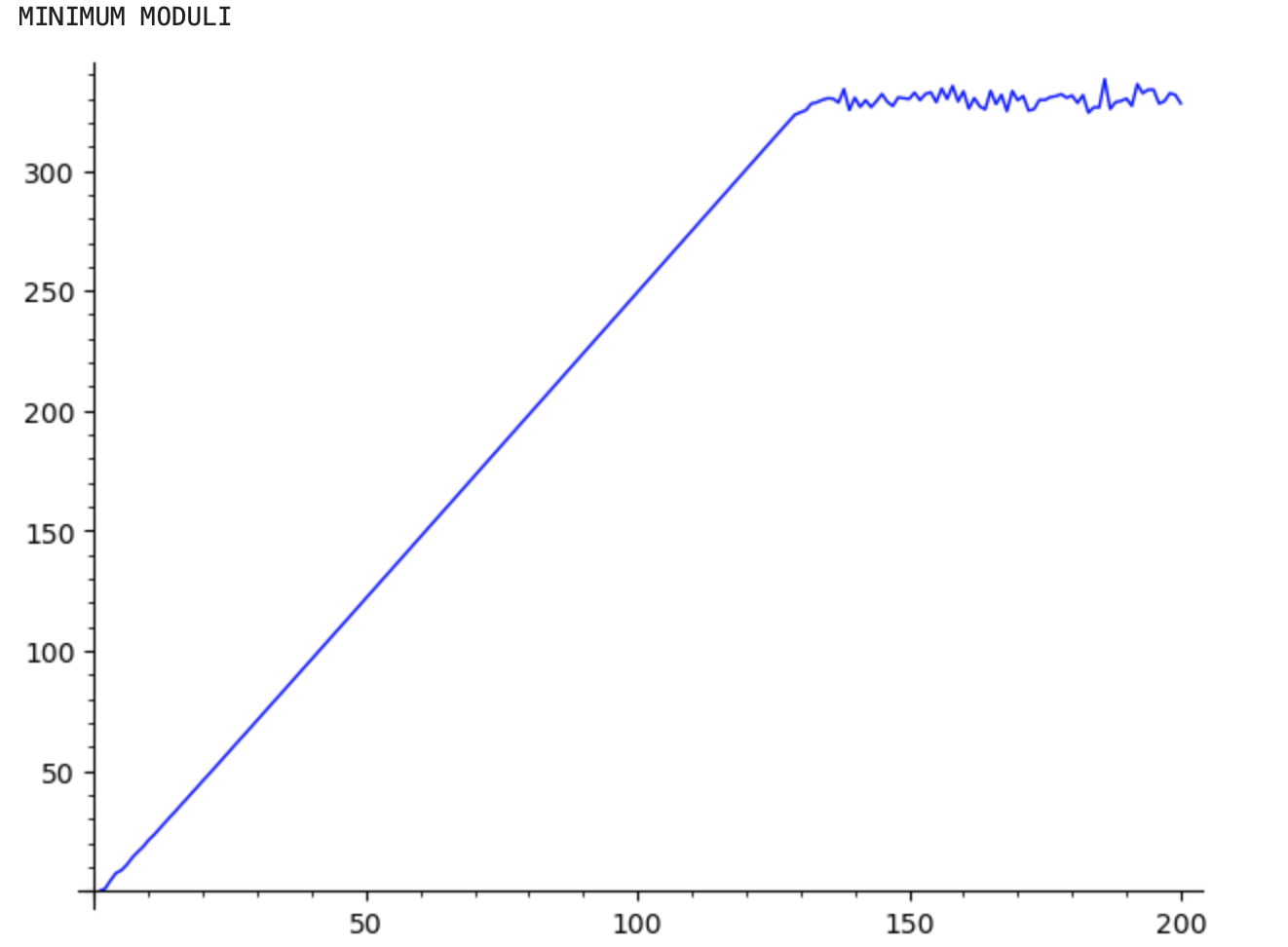}
    \caption{Minimum moduli for $h(n)=\tau(n)$ with $c=0$.}
    \label{fig:def_tau}
\end{figure}

\end{document}